\newtheorem{claim}{Claim}[section]
\newtheorem{thm}{Theorem}
\newtheorem{prop}[claim]{Proposition}
\newtheorem{cor}[claim]{Corollary}
\newtheorem{lem}[claim]{Lemma}
\theoremstyle{definition}
\newtheorem{defi}[claim]{Definition} 
\newtheorem{rem}[claim]{Remark}
\newcommand{\F}{{\mathbb F}}
\newcommand{\Z}{{\mathbb Z}}
\newcommand{\R}{{\mathbb R}}
\renewcommand{\P}{{\mathbb P}}
\newcommand{\isom}{{\xrightarrow{\sim}}}
\newcommand{\con}{{\mathrm{con}}}
\newcommand{\const}{{\mathrm{const}}}
\newcommand{\mon}{{\mathrm{mon}}}
\newcommand{\Gal}{{\mathrm{Gal}}}
\newcommand{\disc}{{\mathrm{disc}}}
\newcommand{\res}{{\mathrm{res}}}
\newcommand{\lc}{{\mathrm{lc}}}
\renewcommand{\Cup}{\bigcup}
\newcommand{\bs}{\boldsymbol}
\newcommand{\ul}{\underline}
\newcommand{\mf}{\mathfrak}
\newcommand{\mc}{\mathcal}
\numberwithin{equation}{section}
\begin{document}

\title{Galois groups of random polynomials over the rational function field}
\author{Alexei Entin\thanks{aentin@tauex.tau.ac.il\\ \indent 2020 \emph{Mathematics Subject Classification}. 11R58, 	11R09, 11T06}}
\affil{School of Mathematical Sciences, Tel Aviv University, Tel Aviv 69978, Israel}

\date{}
\maketitle

\begin{abstract} For a fixed prime power $q$ and natural number $d$ we consider a random polynomial $$f=x^n+a_{n-1}(t)x^{n-1}+\ldots+a_1(t)x+a_0(t)\in\F_q[t][x]$$ with $a_i$ drawn uniformly and independently at random from the set of all polynomials in $\F_q[t]$ of degree $\le d$. We show that with probability tending to 1 as $n\to\infty$ the Galois group $G_f$ of $f$ over $\F_q(t)$ is isomorphic to $S_{n-k}\times C$, where $C$ is cyclic, $k$ and $|C|$ are small quantities with a simple explicit dependence on $f$. As a corollary we deduce that $\P(G_f=S_n\,|\,f\mbox{ irreducible})\to 1$ as $n\to\infty$. Thus we are able to overcome the $S_n$ versus $A_n$ ambiguity in the most natural restricted coefficients random polynomial model over $\F_q[t]$, which has not been achieved over $\Z$ so far.
\end{abstract}

\section{Introduction}

Consider a random polynomial \begin{equation}\label{eq: small box basic}f=x^n+a_{n-1}x^{n-1}+\ldots+a_1x+a_0\in\Z[x],\quad a_i\in\{1,\ldots,H\},\end{equation} with each $a_i$ drawn uniformly and independently from $\{1,\ldots,H\}$. The question of how often such a polynomial is reducible or has a Galois group $G_f$ smaller than $S_n$ has been widely studied (along with variations where the coefficients are taken from $\{-H,\ldots,H\}$ or other distributions). The two main regimes considered in the literature are when we fix $n$ and take $H\to\infty$, called the \emph{large box model} (or the fixed degree model), and when we fix $H$ and take $n\to\infty$, called the \emph{restricted coefficients model}. 

In the large box model it was shown by van der Waerden \cite{VDW36} that $\lim_{H\to\infty}\P(G_f\neq S_n)=0$, and he conjectured that in fact $\P(G_f\neq S_n)=O(H^{-1})$ as $H\to\infty$. After a series of incremental results due to Knobloch \cite{Kno55}, Gallagher \cite{Gal73}, Dietmann \cite{Die12,Die12a}, Chow-Dietmann \cite{ChDi23} and Anderson et al \cite{AGLLS23} the conjecture was finally established by Bhargava \cite{Bha21_}, who was able to treat the remaining most difficult case of $G_f=A_n$ (the alternating group). Van der Waerden also made the more precise conjecture $\P(G_f\neq S_n,S_{n-1})=o(H^{-1})$, which remains open. Here once again the remaining case is to show that $\P(G_f=A_n)=o(H^{-1})$ (except for $n=7,8$ where additional cases remain). For $\P(G_f\neq S_n,A_n)$ an improved bound was obtained by Lemke-Oliver \cite[Corollary 1.3]{Lem23_2}. For recent results on the related problem of counting number fields of fixed degree and Galois group ordered by discriminant see \cite{ChDi23a_1, Lem23_2, Lem24_1}.

The restricted coefficients model received considerable attention over the last three decades, with important breakthroughs made in recent years. A folklore conjecture is that for fixed $H\ge 2$ we have $\lim_{n\to\infty}\P(G_f=S_n)=1$.
The weaker result $\lim_{n\to\infty}\P(G_f=S_n\mbox{ or }A_n)=1$ was first established 
by Bary-Soroker and Kozma \cite{BaKo20} for 
$H$ divisible by at least four 
distinct primes (e.g. 
$H=210$) and later by 
Bary-Soroker, Kozma and 
Koukoulopoulos 
\cite{BKK23} for all $H\ge 35$. Conditional on the Extended Riemann Hypothesis (for Dedekind zeta functions) Breuillard and Varj\'u \cite{BrVa19} established $\lim_{n\to\infty}\P(G_f=S_n\mbox{ or }A_n)=1$ for any $H\ge 2$. However showing that 
$A_n$ occurs with 
negligible probability 
in the restricted coefficients model 
proved to be a 
formidable challenge. A similar difficulty in eliminating $A_n$ occured in the work of Eberhard \cite{Ebe22} on the Galois group of characteristic polynomials of random matrices.

The reason that the group $A_n$ poses a unique challenge (compared to other transitive subgroups of $S_n$) is that it is highly transitive (the method of Breuillard-Varj\'u works by showing that $G_f$ is highly transitive with probability close to 1) and the cycle structure of its elements looks quite ``random" except for the parity condition (the methods of Bary-Soroker--Kozma--Koukoulopoulos use the fact that the elements of a transitive subgroup $A_n\neq G< S_n$ have very special cycle structures). The natural approach to estimating $\P(G_f=A_n)$ in the restricted coefficients model is to apply the discriminant criterion and estimate $\P(\disc(f)=\square)$. However the discriminant is a complicated expression in the coefficients of $f$ (especially in the large degree limit) and it is typically very hard to determine how often it is a square. 

In some variations of the problem this challenge can be overcome. In the case of Littlewood 
polynomials (random polynomials with 
coefficients $\pm 1$) 
of degree $n\equiv 2,4\pmod 8$ 
 one always has $\disc(f)\equiv 5\pmod 8$ (see \cite[\S 4]{BKK23}), so $\disc(f)$ can never be a square and $\lim_{n\to\infty}\P(G_f=A_n)=0$ in this model.
In the case of palindromic Littlewood polynomials 
\cite{Hok23_1} one has $\disc(f)=(-1)^nf(1)f(-1)\cdot\square$ and using this one can show $\lim_{n\to\infty}\P(G_f\subset A_n)=0$ \cite[Corollary 1.2]{Hok23_1} (the precise typical Galois group has not been established in this model, though this might be accessible to existing methods under the ERH). In these and similar variations it is  easy to tell when 
$\disc(f)$ is not a square, which is not the case for the more basic model defined by (\ref{eq: small box basic}).

The weaker statement $\lim_{n\to\infty}\P(f\mbox{ is irreducible})=1$ (first conjectured by Odlyzko and Poonen \cite{OdPo93} for a variation with $a_i\in\{0,1\}\,(1\le i\le n-1),\,a_0=1$) is also only known for $H\ge 35$ unconditionally and for all $H\ge 2$ under the ERH, by the same machinery. Recently this has been established unconditionally for the Littlewood model $a_i=\pm 1$ for a sparse sequence of values of $n$ \cite{BHKP24_2}. A variation with characteristic polynomials of random symmetric matrices was established in \cite{FJSS23} under ERH.

In \cite{EnPo23_2} the author and Popov studied the function field analog of the restricted coefficients model, which we now describe (a different function field model involving additive polynomials was studied by the author, Bary-Soroker and McKemmie \cite{BEM24}). Let $q$ be a fixed prime power and $d$ a fixed natural number. Denote
$$\F_q[t]_{\le d}=\{h\in\F_q[t]:\,\deg h\le d\}$$ and consider the random polynomial
\begin{equation}\label{eq: small box ff}f=x^n+a_{n-1}(t)x^{n-1}+\ldots+a_1(t)x+a_0(t)\in\F_q[t][x],\quad a_i\in\F_q[t]_{\le d},\end{equation} with
$a_i$ drawn uniformly and independently from $\F_q[t]_{\le d}$. For a polynomial $f\in\F_q(t)[x]$ we denote by $G_f$ the Galois group of its splitting field over $\F_q(t)$, provided that the irreducible factors of $f$ are separable (otherwise we leave $G_f$ undefined). We are interested in the distribution of $G_f$ where $f$ is a random polynomial as in (\ref{eq: small box ff}) and $n$ is large.

An important feature of the function field analog is that with positive probability $f$ has a nontrivial factor lying in $\F_q[x]$ and thus having a cyclic Galois group, preventing $G_f=S_n$. This phenomenon likely persists in any reasonable variation of the model (e.g. if we require each $a_i$ to lie in some subset $S_i\subset\F_q[t]_{\le d}$ with $|S_i|\ge 2$). Indeed we may rewrite $f=\sum_{j=0}^db_j(x)t^j\in\F_q[x][t]$ and denote $$c(x)=\con_t(f)=\gcd(b_0,\ldots,b_d)\in\F_q[t]$$ (the content of $f$ in the variable $t$), so that $f(t,x)=c(x)g(t,x)$ where $\con_t(g)=1$.
It is reasonable to expect that the GCD
of a fixed number of
random polynomials
in $\F_q[x]$ is 
$\neq 1$ with 
positive 
probability, and 
indeed it is shown 
in \cite[Proposition 1.9]{EnPo23_2} that 
for a random polynomial $f$ as in (\ref{eq: small box ff})
and any monic 
$c\in\F_q[x],\deg c=k$
we have 
\begin{equation}\label{eq: prob content}\P\left(\con_t(f)=c\right)=\left(1-\frac 1{q^d}\right)\frac 1{q^{k(d+1)}}\end{equation} whenever $n>k$.

If $\con_t(f)=c$ and $g(t,x)\in\F_q(t)[x]$ is separable (the latter condition holds with limit probability 1 by \cite[Proposition 7.3]{EnPo23_2}) then the splitting field of $c(x)$ over $\F_q(t)$, which has the form $\F_{q^m}(t)$ and thus has a cyclic Galois group $C$, while the Galois group of the splitting field of $g$ can be viewed as a subgroup of $S_{n-k}$ via its action on the roots of $g$. Hence $G_f$ can be viewed as a subgroup of $S_{n-k}\times C$. 

With this notation, it was shown in \cite[Theorem 1.7]{EnPo23_2} that 
\begin{equation}\label{eq: thm popov}\lim_{n\to\infty}\P\left(G_f\in\{S_{n-k}\times C,A_{n-k}\times C,S_{n-k}\times_{C_2}C\}\right)=1\end{equation}
(the third possibility $S_{n-k}\times_{C_2}C$ occurs only if $|C|$ is even and denotes the fiber product with respect to the unique nontrivial homomorphisms to the group $C_2$ of order 2) and that assuming a uniform variant of the polynomial Chowla conjecture over $\F_q[t]$ one has $\lim_{n\to\infty}\P(G_f=S_{n-k}\times C)=1$. In the present paper we show this unconditionally, thus resolving the ambiguity in (\ref{eq: thm popov}), which is analogous to the $S_n$ versus $A_n$ ambiguity in the restricted coefficients model over $\Z$.

\begin{thm}\label{thm: main} Let $q$ be a fixed odd prime power, $d$ a fixed natural number. Let $f=x^n+\sum_{i=0}^{n-1}a_i(t)x^i$ be a random polynomial with each $a_i\in\F_q[x]_{\le d}$ drawn uniformly and independently. Then
$$\lim_{n\to\infty}\P(G_f= S_{n-k}\times C)=1,$$
where $k=\deg_x\con_t(f)$ and $C=\Gal(\con_t(f)/\F_q(t))=\Gal(\con_t(f)/\F_q)$ is a cyclic group of order\\ $\mathrm{lcm}(\deg P_1,\ldots,\deg P_l)$, with $\con_t(f)=\prod_{i=1}^lP_i$ being the prime factorization of $\con_t(f)$ in $\F_q[x]$.
\end{thm}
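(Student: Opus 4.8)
The strategy is to reduce the theorem to a statement about the geometric monodromy of the cofactor $g=f/\con_t(f)$, and to establish that statement by a second moment argument over the places of $\F_q(t)$. For the reduction: by (\ref{eq: prob content}) the content $c=\con_t(f)$ has degree $k=\deg_x c$ bounded with probability tending to $1$, and by \cite[Proposition 7.3]{EnPo23_2} the polynomial $g$ --- monic of degree $n-k$ in $x$, with $\con_t(g)=1$ --- is separable, so in particular $\disc_x g\ne 0$, with limit probability $1$. Conditioning on $\con_t(f)=c$ for each fixed monic $c$ turns $g$ into a uniformly random monic polynomial of degree $n-k\to\infty$ with coefficients in $\F_q[t]_{\le d}$ and content $1$, so it suffices to treat the case $c=1$, $g=f$. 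The splitting field of $f$ over $\F_q(t)$ is the compositum of the constant extension $\F_{q^{|C|}}(t)$ (the splitting field of $c$) and the splitting field $L$ of $g$; hence $G_f$ is the fibre product of $C=\Gal(\F_{q^{|C|}}(t)/\F_q(t))$ and $G_g=\Gal(L/\F_q(t))$ over $\Gal\big((L\cap\F_{q^{|C|}}(t))/\F_q(t)\big)$. By (\ref{eq: thm popov}) we may assume $G_f$ is one of the three listed groups; in each of them $G_g\in\{S_{n-k},A_{n-k}\}$, hence also the geometric monodromy group $G_g^{\mathrm{geom}}=\Gal(L\overline{\F}_q/\overline{\F}_q(t))$ --- a normal subgroup of $G_g$ with abelian quotient, so equal to $S_{n-k}$ or $A_{n-k}$ once $n-k$ is large. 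If $G_g^{\mathrm{geom}}=S_{n-k}$ then $G_g=S_{n-k}$ and $\overline{\F}_q$ is algebraically closed in $L$, forcing $L\cap\F_{q^{|C|}}(t)=\F_q(t)$ and $G_f=S_{n-k}\times C$; whereas $G_g^{\mathrm{geom}}=A_{n-k}$ holds precisely when the geometric sign character is trivial, i.e.\ when $\disc_x g$ is a square in $\overline{\F}_q(t)$, equivalently when every irreducible factor of $\disc_x g\in\F_q[t]$ occurs to an even power. So, granting (\ref{eq: thm popov}), it remains to show that
$$\P\big(\disc_x g\ \text{is a perfect square over}\ \overline{\F}_q\big)\ \longrightarrow\ 0\qquad(n\to\infty).$$

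To prove this I would exhibit, with probability tending to $1$, a \emph{simple} zero of $\disc_x g$: a place $v$ of $\F_q(t)$ with $\mathrm{ord}_v(\disc_x g)=1$. Such a $v$ is a place of simple, tamely ramified behaviour in the cover $\{g=0\}\to\P^1_t$ --- its local monodromy is a transposition --- and its existence forbids $\disc_x g$ from being a square over $\overline{\F}_q$. Writing $\F_q[t]/v=\F_{q^e}$, $e=\deg v$, and letting $\theta$ be a root of $v$, one checks (using the local structure of the extension at $v$, which is tame since $q$ is odd) that $\mathrm{ord}_v(\disc_x g)=1$ exactly when $g\bmod v\in\F_{q^e}[x]$ factors as $(x-\rho)^2h(x)$ with $\rho\in\F_{q^e}$, $h$ squarefree and $h(\rho)\ne 0$, and the unfolding along $v$ is transversal, i.e.\ $\partial_t g(\theta,\rho)\ne 0$ in $\F_{q^e}$. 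Let $N_g$ be the number of such places. For a place of degree $e$, the reduction $a_i\mapsto a_i\bmod v$ of $\F_q[t]_{\le d}$ into $\F_{q^e}$ is completely explicit --- it is $\F_q$-linear, surjective for $e\le d+1$, and by the Chinese remainder theorem the reductions of $g$ at any family of places of total degree $\le d+1$ are jointly uniform and independent. Combining this with the local description and the classical count of monic squarefree polynomials over $\F_{q^e}$ gives $\P\big(\mathrm{ord}_v(\disc_x g)=1\big)\asymp q^{-\deg v}$ uniformly for $\deg v$ at most a fixed multiple $c_0n$ of $n$, whence $\mathbb{E}[N_g]\ \ge\ c\!\sum_{e\le c_0n}\tfrac1e\ \gg\ \log n\ \longrightarrow\ \infty$.

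The real difficulty is the second moment. One must bound the pair correlations $\sum_{v\ne w}\big(\P(\mathrm{ord}_v(\disc_x g)=\mathrm{ord}_w(\disc_x g)=1)-\P(\mathrm{ord}_v(\disc_x g)=1)\P(\mathrm{ord}_w(\disc_x g)=1)\big)$ and show this is $o\big((\log n)^2\big)$; then $\P(N_g=0)\le\mathrm{Var}(N_g)/\mathbb{E}[N_g]^2\to 0$ and the theorem follows. Pairs $v\ne w$ with $\deg v+\deg w\le d+1$ contribute exactly $0$, the reductions of $g$ at $v$ and at $w$ being independent; the whole problem is the range $\deg v+\deg w>d+1$, where the joint reduction $(g\bmod v,\,g\bmod w)$ is no longer independent but is still controlled by a single explicit $\F_q$-linear relation coming from $\deg_t a_i\le d$. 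This pair-correlation estimate is precisely the input that \cite{EnPo23_2} could only supply conditionally on a uniform polynomial Chowla conjecture; doing it unconditionally --- exploiting that over $\F_q[t]$ the ``local models'' $g\bmod v$ have an exact combinatorial description, and that only an aggregate bound over all pairs is needed rather than an asymptotic for each --- is where I expect the main work, and the main new ingredient of the paper, to lie. (Incidentally, the borderline case $G_g=S_{n-k}$, $G_g^{\mathrm{geom}}=A_{n-k}$, $|C|$ odd also yields $G_f=S_{n-k}\times C$, so nothing is lost by instead bounding the probability of the stronger event that $\disc_x g$ is a square over $\overline{\F}_q$.)
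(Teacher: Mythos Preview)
Your reduction is correct and matches the paper: the problem is reduced (via \cite{EnPo23_2}) to showing that with limit probability $1$ the discriminant $\disc_x f$ has a prime factor of multiplicity one, equivalently is not of the form $\const\cdot\square$ in $\F_q[t]$. You also correctly locate the difficulty in the pair-correlation of the events $\{\mathrm{ord}_v(\disc_x g)=1\}$ for places $v$ of total degree exceeding $d+1$.

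However, your proposed route to this --- a second-moment argument on $N_g=\#\{v:\mathrm{ord}_v(\disc_x g)=1\}$ via the reductions $g\bmod v$ --- is not what the paper does, and the gap you flag is a genuine obstruction, not a detail. The pair-correlation you need is essentially the same Chowla-type input that \cite{EnPo23_2} could only supply conditionally; the present paper does \emph{not} prove it. Instead it sidesteps the correlation problem entirely by a characteristic-$p$ trick: one fibres the family $\mathcal F$ into subfamilies $\mathcal F_{f_0}=\{f_0+h(x)^p:\,h\in\F_q[x]_{\le(n-1)/p}\}$, along each of which the derivative $f'=f_0'$ is constant, so that
\[
\disc(f)=\pm\lc(f_0')^n\,N_{L/\F_q(t)}\bigl(f_0(t,\alpha)+h(\alpha)^p\bigr),\qquad L=\F_q(t,\alpha),\ f_0'(t,\alpha)=0.
\]
The paper first shows (a large-box Hilbert-irreducibility argument, Proposition~\ref{prop: derivative}) that for almost all $f_0$ the primitive part of $f_0'$ is absolutely irreducible with $\Gal=S_d$, and then runs an elementary sieve (Proposition~\ref{prop: disc}) in the variable $h$: the set $\{f_0(t,\alpha)+h(\alpha)^p\}$ is regular enough that divisibility by distinct primes $\mathfrak p\in\mathcal P_L$ of moderate degree are \emph{exactly} independent events (Lemma~\ref{lem: crt}), so no Chowla-type estimate is needed. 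This yields a prime $P\in\F_q[t]$ with $P\Vert\disc(f)$, with an explicit error $O(n^{-1/3}\log^2 n)$.

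So the missing idea in your proposal is precisely this derivative-fixing decomposition $f=f_0+h^p$; without it the second-moment approach remains stuck at the same conditional barrier as \cite{EnPo23_2}.
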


\begin{cor}\label{cor: main} Let $q$ be a fixed odd prime power, $d$ a fixed natural number. Let $f=x^n+\sum_{i=0}^{n-1}a_i(t)x^i$ be a random polynomial with each $a_i\in\F_q[x]_{\le d}$ drawn uniformly and independently. Then $$\lim_{n\to\infty}\P(G_f=S_n\,|\,f\mbox{ is irreducible})=\lim_{n\to\infty}\P(G_f=S_n\,|\,\con_t(f)=1)=1.$$
\end{cor}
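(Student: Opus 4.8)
We would build on \cite[Theorem~1.7]{EnPo23_2}. Put $g=f/\con_t(f)$ and $N=n-k$, so that $G_f$ is canonically a subgroup of $S_N\times C$ and, with probability $\to1$, lies in $\{S_N\times C,\,A_N\times C,\,S_N\times_{C_2}C\}$. A direct check, using that $G_f$ lies in this list, shows that $G_f=S_N\times C$ precisely when the polynomial $\disc_x(g)\in\F_q[t]$ has an irreducible factor of odd multiplicity --- equivalently, is not a perfect square in $\overline{\F_q}[t]$, equivalently the geometric monodromy group of $g$ is $S_N$ rather than $A_N$. Thus it suffices to show that, with probability $\to1$, $\disc_x(g)$ has an irreducible factor of odd multiplicity. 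Conditioning on $\con_t(f)=c$, the assignment $g\mapsto c\cdot g$ is a bijection onto $\{f:\con_t(f)=c\}$ --- here one uses that $c$ is monic --- under which $g$ becomes uniformly distributed over monic polynomials of degree $n-k$ with coefficients in $\F_q[t]_{\le d}$ and content $1$; since the content distribution $(\ref{eq: prob content})$ is concentrated on bounded degrees, it is enough to prove the statement for $f$ itself conditioned on $\con_t(f)=1$, with $n\to\infty$. Finally, if $\disc_x(f)$ were a constant multiple of a perfect square then for every monic irreducible $P\in\F_q[t]$ of even degree the reduction $\disc_x(f\bmod P)$ would be a square in $\F_{q^{\deg P}}$ (a nonzero constant of $\F_q^\times$ is a square in $\F_{q^2}$, hence in $\F_{q^{\deg P}}$ for $\deg P$ even). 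Hence it is enough to prove:

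\emph{conditioned on $\con_t(f)=1$, with probability $\to1$ as $n\to\infty$ there is a monic irreducible $P\in\F_q[t]$ of even degree such that $\disc_x(f\bmod P)$ is a nonsquare in $\F_{q^{\deg P}}$.}

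\medskip

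The plan is to prove this by a second--moment count over primes of bounded degree. Fix $B$, let $\mathcal P_B$ be the finite set of monic irreducibles of even degree $\le B$, let $\chi_P$ denote the quadratic character of $\F_{q^{\deg P}}$, let $A_P=\{\chi_P(\disc_x(f\bmod P))=-1\}$, and set $S_B=\sum_{P\in\mathcal P_B}\mathbf 1_{A_P}$. On the event that no monic irreducible of even degree has nonsquare discriminant --- in particular whenever $\disc_x(f)$ is a constant times a perfect square --- we have $S_B=0$; so, together with \cite[Theorem~1.7]{EnPo23_2} and the above reductions, the theorem follows once $\limsup_{n\to\infty}\P(S_B=0\mid\con_t(f)=1)\to0$ as $B\to\infty$. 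For this I would establish: \emph{(I)} for each fixed $P$, $\P(A_P\mid\con_t(f)=1)\to\rho_P$ as $n\to\infty$ for some $\rho_P\ge c_0$, where $c_0>0$ is an absolute constant (in fact I expect $\rho_P\to\tfrac12$ as $\deg P\to\infty$); and \emph{(II)} for each fixed pair $P\neq P'$, $\P(A_P\cap A_{P'}\mid\con_t(f)=1)\to\rho_P\rho_{P'}$, i.e.\ asymptotic pairwise independence. Granting \emph{(I)} and \emph{(II)}, $\mathbb E[S_B\mid\con_t(f)=1]\to\mu_B:=\sum_{P\in\mathcal P_B}\rho_P$ and $\mathrm{Var}(S_B\mid\con_t(f)=1)\to\mu_B-\sum_P\rho_P^2\le\mu_B$, so by Chebyshev $\limsup_n\P(S_B=0\mid\con_t(f)=1)\le1/\mu_B$; and $\mu_B\ge c_0|\mathcal P_B|\to\infty$, which closes the argument.

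\medskip

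It remains to discuss \emph{(I)} and \emph{(II)}, which is where the real work lies --- and what follows is the main obstacle. When $\deg P\le d+1$ --- and, for \emph{(II)}, when $\deg P+\deg P'\le d+1$ --- the reductions $a_i(t)\bmod P$ (for the various $i$) are independent and uniform over $\F_{q^{\deg P}}$, so that, up to a negligible correction from the mild conditioning $\con_t(f)=1$, $f\bmod P$ is a uniformly random monic polynomial of degree $n$ over $\F_{q^{\deg P}}$; \emph{(I)} and \emph{(II)} then follow from the standard fact that the parity of the factorisation type of a random monic polynomial over a finite field equidistributes as its degree $\to\infty$ (so $\rho_P\to\tfrac12(1-q^{-\deg P})$ here). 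The substance is the range $\deg P>d+1$: there $a_i(t)\bmod P$ is uniform over the \emph{fixed} $(d+1)$-dimensional $\F_q$-subspace $\mathrm{span}_{\F_q}(1,\bar t,\dots,\bar t^{\,d})$ of $\F_{q^{\deg P}}$ (with $\bar t$ the image of $t$), and one must control the quadratic character sums $\mathbb E[\chi_P(\disc_x(f\bmod P))]$ and $\mathbb E[\chi_P(\disc_x(f\bmod P))\,\chi_{P'}(\disc_x(f\bmod P'))]$ --- averages of a quadratic (Jacobi-type) symbol of the discriminant over a thin coefficient box --- showing the first is bounded away from $1$ and the second tends to $0$, uniformly in the finitely many $P,P'$ of degree $\le B$. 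This is the analytic heart of the matter, and the unconditional substitute for the appeal to a uniform polynomial Chowla conjecture in \cite{EnPo23_2}. It cannot be read off from Weil's theorem directly, since $\disc_x(f)$ has degree in the coefficients growing with $n$ while the box has fixed size $q^{d+1}$; rather one would freeze all but a bounded number of the coefficients $a_i(t)$ and exploit the non-degeneracy of $\disc_x(f\bmod P)$ as a function of the free ones --- e.g.\ writing $f=\tilde f(x)+a_1(t)x+a_0(t)$, reducing mod $P$, and analysing $\disc_x\big(\tilde f+a_1x+a_0\big)$ as a polynomial in $(a_0,a_1)$ with a controlled factorisation for generic frozen $\tilde f$ --- so as to reduce to a character sum of bounded complexity, estimable by completion and Weil's bounds. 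The auxiliary points --- the conditioning on the content, the harmless conditioning $\con_t(f)=1$, and the degenerate cases $p\mid n$ where $f'$ drops degree --- are routine by comparison.
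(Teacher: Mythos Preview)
You have overlooked that the corollary is immediate from Theorem~\ref{thm: main}, which you may cite: on the event $\con_t(f)=1$ one has $k=0$ and $C=1$, so Theorem~\ref{thm: main} gives $\P(G_f=S_n)\to 1$; since $\P(\con_t(f)=1)\to 1-q^{-d}>0$ and (by (\ref{eq: thm popov}),(\ref{eq: prob content})) $\P(f\text{ irreducible})\to 1-q^{-d}>0$ as well, the conditional statements follow at once. That is the paper's entire proof.

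What your proposal actually attempts is an independent proof of Theorem~\ref{thm: disc} (that $\disc_x(f)$ is rarely $\const\cdot\square$), which is the main new result of the paper, not a corollary. Your reductions up to the ``analytic heart'' are correct, but the proposed attack on (I) and (II) for primes of degree $>d{+}1$ does not work. Freezing all coefficients but $a_0$ and reducing mod $P$, the discriminant becomes (up to a constant) $\prod_{i=1}^{n-1}(c_i+a_0)$ with $a_0$ ranging over the $(d{+}1)$-dimensional $\F_q$-subspace $V\subset\F_{q^{\deg P}}$; completing and applying Weil to each mixed sum $\sum_{a_0\in\F_{q^{\deg P}}}\chi_P\!\big(\prod_i(c_i+a_0)\big)\psi_b(a_0)$ gives a bound $O(nq^{\deg P/2})$ for each of the $q^{\deg P-d-1}$ characters $b\in V^\perp$, so the normalized subspace sum is $O\!\big(nq^{\deg P/2-d-1}\big)$, which diverges as $n\to\infty$ for fixed $P$. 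Unfreezing more coefficients does not help: the degree of $\disc_x(f)$ in any bounded set of the $a_i$ still grows linearly in $n$, so the Weil loss of a factor $\asymp n$ persists. On the other hand, restricting to the ``clean'' range $\deg P\le d{+}1$ (or even $\deg P+\deg P'\le d{+}1$) leaves only finitely many primes, so $\mu_B$ stays bounded and Chebyshev (or even full independence) only yields $\limsup_n\P(S_B=0)\le\prod_P(1-\rho_P)>0$, never $\to 0$. This is exactly the $S_n$ vs.\ $A_n$ obstruction that remains open over $\Z$; the paper circumvents it by the characteristic-$p$ decomposition $f=f_0+h(x)^p$, which freezes $f'$ and turns $\disc_x(f)$ into a norm from the function field of $f_0'$, so that a sieve for a simple prime divisor can be run (Propositions~\ref{prop: derivative}--\ref{prop: disc}). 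Your outline contains no substitute for this idea.
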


\begin{proof} Immediate from Theorem \ref{thm: main} and the fact that $\lim_{n\to\infty}\P(f\mbox{ is irreducible})=\lim_{n\to\infty}\P(\con_t(f)=1)=1-\frac 1{q^d}>0$, which follows from (\ref{eq: thm popov}) and (\ref{eq: prob content}).
\end{proof}

\begin{rem} One may argue that a more natural analog of the model (\ref{eq: small box basic}) over $\F_q[t]$ would be to take the coefficients $a_i(t)$ to be monic of degree $d$ or monic of degree up to $d$. Our methods, as well as the methods of \cite{EnPo23_2}, can be adapted with minimal changes to prove the assertion of Theorem \ref{thm: main} for the variation with $a_i$ monic of degree $d$, chosen uniformly and independently. The distribution of the values of $k$ and $|C|$ is different in this model, but it can also be easily computed. We chose the model (\ref{eq: small box ff}) for convenience and because it is the one studied in \cite{EnPo23_2}. The model with $a_i$ monic of degree up to $d$, as well as other non-uniform distributions on $\F_q[t]_{\le d}$, would require a substantial enhancement of our methods.
\end{rem}

\begin{rem} It is possible to obtain a more explicit estimate in Theorem \ref{thm: main} of the form $P(G_f=S_{n-k}\times C)=1+O(n^{-\delta})$ for some $\delta=\delta(q,d)>0$. We chose not to give the details here, as it would require reworking the results of \cite{EnPo23_2} (as well as \cite[Lemma 9]{BaKo20}, a key result used therein) with more explicit error terms, which would significantly lengthen the exposition. Theorem \ref{thm: disc} below, which is the main new ingredient in the present work, will be stated with an explicit error term of this form.
\end{rem}

\begin{rem} Our method crucially depends on the assumption that $q$ is odd, as it uses the discriminant criterion for when the Galois group of a polynomial is contained in the alternating group, which does not apply in characteristic 2. The alternative criteria available in characteristic 2 lack the multiplicative structure we exploit in the present work, so this case would require entirely new methods.\end{rem}

\noindent
{\bf Acknowledgment.} The author would like to thank the anonymous referee of a previous draft of this paper for suggesting references to related work and for spotting a minor error. The author was partially supported by Israel Science Foundation grant no. 2507/19.

\section{Outline of the proof}

Theorem \ref{thm: main} will follow by combining the results of \cite{EnPo23_2} and the following

\begin{thm}\label{thm: disc}Let $q$ be a fixed odd prime power, $d$ a fixed natural number. Let $f=x^n+\sum_{i=0}^{n-1}a_i(t)x^i$ be a random polynomial with each $a_i\in\F_q[t]_{\le d}$ drawn uniformly and independently. Then
$$\P(\disc_x(f)=\const\cdot\square)=O_{q,d}\left(\frac{\log^{2}n}{n^{1/3}}\right),$$
where $\const\cdot\square$ denotes an element of $\F_q[t]$ which is of the form $au^2$ with $a\in\F_q,\,u\in\F_q[t]$. The implicit constant may depend on $q,d$.
\end{thm}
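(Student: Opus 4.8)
\medskip
\noindent\emph{Sketch of a possible approach.}
The plan is to produce, with probability $1-O(\log^2 n/n^{1/3})$, a monic irreducible $\pi\in\F_q[t]$ of \emph{even} degree at which $\disc_x(f)$ reduces to a nonzero non-square in $\F_q[t]/\pi$. This would suffice: if $\disc_x(f)=au^2$ with $a\in\F_q$, $u\in\F_q[t]$, then for every monic irreducible $\pi$ of even degree $m$ the element $a$ is a square in $\F_q[t]/\pi\cong\F_{q^m}$ — indeed $\F_q^*\subseteq(\F_{q^m}^*)^2$ exactly when $1+q+\cdots+q^{m-1}$ is even, i.e. when $m$ is even, since $q$ is odd — so $\disc_x(f)\bmod\pi$ is a square or $0$; any $\pi$ violating this forces $\disc_x(f)\ne\const\cdot\square$ (this also disposes of the case $\disc_x(f)=0$). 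So I would fix an even integer $m_0$, eventually of size $(\tfrac13+o(1))\log_q n$, set $\mathbf 1_\pi$ to be the indicator of the event ``$\disc_x(f)\bmod\pi$ is a nonzero non-square'', and study $N=N(f):=\sum_{\deg\pi=m_0}\mathbf 1_\pi$. Since $\P(\disc_x(f)=\const\cdot\square)\le\P(N=0)$, the goal becomes bounding $\P(N=0)$.

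\smallskip
\noindent\emph{First moment.}
For fixed $\pi$ of degree $m_0$, the reductions $a_i\bmod\pi$ are uniform and independent in the $\F_q$-subspace $B\subseteq k_\pi:=\F_q[t]/\pi\cong\F_{q^{m_0}}$ equal to the image of $\F_q[t]_{\le d}$, so $f\bmod\pi$ is a ``small box'' random polynomial of degree $n$ over the field $k_\pi$. When $f\bmod\pi$ is separable, $\disc_x(f)\bmod\pi$ is a non-square in $k_\pi$ precisely when the Frobenius permutation attached to the factorization type of $f\bmod\pi$ is odd (this is where $q$ odd enters). I would then invoke the equidistribution input
\[
\P\bigl(f\bmod\pi\ \text{is separable with odd Frobenius parity}\bigr)=\tfrac12+o(1)
\]
uniformly over monic irreducible $\pi$ of degree $m_0$ — the analogue over the field $k_\pi$ of \cite[Lemma 9]{BaKo20} and of its function-field incarnation used in \cite{EnPo23_2}, whose proof reduces to bounding an exponential sum over $B^n$ by $o(|B|^n)$. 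Together with $\#\{\pi:\deg\pi=m_0\}=q^{m_0}/m_0+O(q^{m_0/2})$ this gives $\mathbb{E}[N]=(\tfrac12+o(1))q^{m_0}/m_0\asymp n^{1/3}/\log n$.

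\smallskip
\noindent\emph{Second moment and conclusion.}
For distinct $\pi\ne\pi'$ of degree $m_0$, the Chinese Remainder Theorem realises $(f\bmod\pi,f\bmod\pi')$ as the analogous small-box polynomial modulo $\pi\pi'$ (of degree $2m_0$), and the two-prime version of the above equidistribution yields $\P(\disc_x(f)\ \text{a nonzero non-square mod both}\ \pi\ \text{and}\ \pi')=\tfrac14+o(1)$, with a power-saving error term — it is this error that pins down how large $m_0$ can be. Summing over the $O((q^{m_0}/m_0)^2)$ pairs then gives $\sum_{\pi\ne\pi'}\mathrm{Cov}(\mathbf 1_\pi,\mathbf 1_{\pi'})=O(\mathbb{E}[N]\log n)$, hence $\mathrm{Var}(N)=O(\mathbb{E}[N]\log n)$, and Chebyshev's inequality yields
\[
\P(N=0)\le\frac{\mathrm{Var}(N)}{\mathbb{E}[N]^2}=O\!\left(\frac{\log n}{\mathbb{E}[N]}\right)=O\!\left(\frac{\log^2 n}{n^{1/3}}\right),
\]
which, with the reduction in the first paragraph, proves the theorem.

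\smallskip
\noindent\emph{Main obstacle.}
The hard part is the small-box equidistribution of Frobenius parity — and its pairwise refinement — over the residue fields $k_\pi=\F_{q^{m_0}}$, \emph{whose size $q^{m_0}$ grows with $n$}. The classical small-box results fix the base field, whereas here one must rerun the exponential-sum estimates behind \cite[Lemma 9]{BaKo20} with the field size tracked explicitly; the quality of the resulting error is exactly what caps $m_0$ at $\approx\tfrac13\log_q n$ and thereby produces the exponent $1/3$. Everything else — the reduction to a non-residue at an even-degree prime, the discriminant–parity dictionary, and the second-moment bound — is routine.
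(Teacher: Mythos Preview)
Your approach is genuinely different from the paper's, but it has a real gap at exactly the step you flag as the ``main obstacle.'' The parity-equidistribution input you need --- that for a fixed prime $\pi$ of degree $m_0\approx\tfrac13\log_q n$ one has $\P\bigl(\chi_\pi(\disc_x(f))=-1\bigr)=\tfrac12+o(1)$, together with its two-prime refinement --- does \emph{not} follow from \cite[Lemma 9]{BaKo20} or its function-field version by merely ``tracking the field size.'' Once $\deg\pi>d+1$ the image $B$ of $\F_q[t]_{\le d}$ in $k_\pi$ is a proper $\F_q$-subspace of size $q^{d+1}$ inside a field of size $q^{m_0}$, and bounding $\sum_{\bar a_i\in B}\chi_\pi(\disc(\bar f))$ is a genuine small-box multiplicative-character problem; over $\Z$ this is precisely the obstruction that keeps the methods of \cite{BaKo20,BKK23} from separating $S_n$ and $A_n$. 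In fact \cite{EnPo23_2} already ran the character-sum route to Theorem~\ref{thm: disc} and could only close it \emph{conditionally} on a uniform polynomial Chowla conjecture; bypassing that conditional input is the whole point of the present paper. Note too that computing $\mathbb{E}[N]$ by swapping the sum over $\pi$ with the expectation over $f$ is circular: for fixed $f$ the Weil bound gives $N(f)\approx\tfrac12\#\{\pi\}$ when $\disc_x(f)\ne\const\cdot\square$ and $N(f)=0$ otherwise, so both $\mathbb{E}[N]$ and $\mathrm{Var}(N)$ already encode the very probability $\P(\disc_x(f)=\const\cdot\square)$ you are trying to bound.

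The paper's route avoids quadratic characters of $\disc_x(f)$ altogether. It fibres $\mathcal F$ into cosets $\mathcal F_{f_0}=\{f_0+h(x)^p:h\in\F_q[x]_{\le(n-1)/p}\}$ along which $f'=f_0'$ is constant, writes $\disc(f)=\pm\lc(f_0')^n\,N_{L/\F_q(t)}\bigl(f_0(t,\alpha)+h(\alpha)^p\bigr)$ with $L=\F_q(t,\alpha)$ for $\alpha$ a root of the primitive part of $f_0'$, and then runs an elementary inclusion--exclusion sieve over primes of $L$ to produce, with the stated probability, a prime $\mathfrak p$ with $\deg_{\F_q(t)}\mathfrak p=\deg_{\F_q(\alpha)}\mathfrak p=1$ and $\mathfrak p\,\Vert\,f_0(t,\alpha)+h(\alpha)^p$; this forces $P=N_{L/\F_q(t)}(\mathfrak p)$ to divide $\disc_x(f)$ exactly once, hence $\disc_x(f)\ne\const\cdot\square$. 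The decisive gain is that divisibility of $f_0(t,\alpha)+h(\alpha)^p$ by such a prime is a \emph{linear} congruence on $h$, so the sieve needs no nontrivial character-sum cancellation at all.
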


To explain the basic idea of the derivation of Theorem \ref{thm: main} from Theorem \ref{thm: disc}, consider for simplicity a random polynomial $f$ as in (\ref{eq: small box ff}) conditional on $c(x)=\con_t(f)=1$, which by (\ref{eq: prob content}) occurs with probability $1-\frac 1{q^d}$. Then $C=G_c=1$ and by (\ref{eq: thm popov}) with limit probability 1 we have $G_f\in\{S_n,A_n\}$. However $G_f=A_n$ cannot occur if $\disc(f)\neq\square$, which by Theorem \ref{thm: disc} happens with limit probability 1. The general argument is contained in \cite[Proof of Theorem 1.8]{EnPo23_2}.

\begin{proof}[Proof of Theorem \ref{thm: main}] In \cite[Proof of Theorem 1.8]{EnPo23_2} the assertion of Theorem \ref{thm: main} is derived from the assertion of Theorem \ref{thm: disc}. In that work the latter assertion is only proved conditionally on a uniform polynomial Chowla conjecture \cite[Conjecture 1.5]{EnPo23_2}, but here we will prove it unconditionally, thus establishing Theorem \ref{thm: main} unconditionally.\end{proof}

The rest of the paper is dedicated to proving Theorem \ref{thm: disc}. 

\subsection{Notation and conventions}

Throughout the rest of the paper $p>2$ is a fixed prime, $q$ is a fixed power of $p$ and $d$ is a fixed natural number. All asymptotic notation will have implicit constants or rate of convergence which may depend on $q,d$, while dependence on other parameters will be stated explicitly. We will occasionally reiterate these assumptions for emphasis.

For a polynomial $f\in\F_q[t,x]$ we will always denote by $\lc(f)\in\F_q[t]$ its leading coefficient as a polynomial in $x$, by $f'=\frac{\partial f}{\partial x}\in\F_q[t,x]$ its derivative in the variable $x$ and  by $\disc(f)\in\F_q[t]$ its discriminant in the variable $x$. For any $e\in\R$ we denote
$$\F_q[x]_{\le e}=\{h\in\F_q[x]:\deg h\le e\},$$
$$\F_q[x]^\mon_{e}=\{h\in\F_q[x]\mbox{ monic}:\deg h= e\}.$$
The cardinality of a finite set $S$ is denoted by $|S|$. If $P\in\F_q[t]$ is a prime and $Q\in\F_q[t]$ then $P^k\Vert Q$ means that $P$ divides $Q$ exactly $k$ times. The algebraic closure of a field $K$ is denoted by $\overline K$. By an absolutely irreducible polynomial $f\in\F_q[t,x]$ we mean a polynomial which is irreducible over $\overline\F_q$. If $A$ is a ring and $a\in A$ we denote by $A/a=A/(a)$ the quotient of $A$ by the ideal generated by $a$.

\subsection{Outline of the proof of Theorem \ref{thm: disc}}\label{sec: outline thm2}

Our basic strategy is to subdivide the family of polynomials (\ref{eq: small box ff}) into subfamilies with a fixed value of $f'$, along which $\disc(f)$ behaves more regularly. A similar strategy underlies the work of Sawin and Shusterman on the Hardy-Littlewood, Chowla and Bateman-Horn conjectures over function fields \cite{SaSh22a,Sash22} as well as the work on squarefree values of polynomials over function fields by Poonen \cite{Poo03}, Lando \cite{Lan15} and Carmon \cite{Car21}. 

First we recall (see e.g. \cite[Equation (2.4)]{BEF23}) that for a monic $f\in\F_q[t][x]$ with $f'\neq 0$ we have
\begin{equation}\label{eq: disc prod basic}\disc(f)=\pm\lc(f')^{\deg f}\prod_{f'(\alpha)=0}f(\alpha),\end{equation}
where the product is over the roots of $f'$ in $\overline{\F_q(t)}$ counted with multiplicity (if $f'=0$ then $\disc(f)=0$).

The main idea of our proof is to subdivide the family of polynomials
\begin{equation}\label{eq: def F}\mathcal F=\Biggl\{x^n+\sum_{i=0}^{n-1}a_ix^i:\,a_i\in\F_q[t]_{\le d}\Biggr\}\end{equation} into subfamilies of the form
\begin{equation}\label{eq: def F_f0}\mathcal F_{f_0}=\bigl\{f_0(t,x)+h(x)^p:\,h\in\F_q[x]_{\le(n-1)/p}\bigr\}\end{equation}
with $f_0\in\mathcal F$. Note that for a given $f_0$ the derivative of any $f\in\mathcal F_{f_0}$ is $f'=f_0'+(h(x)^p)'=f_0'$ and therefore if $f_0'\neq 0$ we have (from (\ref{eq: disc prod basic}))
\begin{equation}\label{eq: disc prod over roots}\disc(f)=\pm\lc(f_0')^n\prod_{f_0'(\alpha)=0}(f_0(\alpha)+h(\alpha)^p),\end{equation}
with the product being over the roots of $f_0'$.

Let us now assume for simplicity that $f_0$ is such that $f_0'(t,x)$ is absolutely irreducible (we will see later that with probability close to 1 as $f_0$ ranges uniformly over $\mathcal F$, $f_0'$ is close to being absolutely irreducible. We will also see how to overcome the discrepancy when present). Then by (\ref{eq: disc prod over roots}) we have \begin{equation}\label{eq: disc norm}\disc(f)=\pm\lc(f_0')^nN_{\F_q(t,\alpha)/\F_q(t)}(f_0(\alpha)+h(\alpha)^p),\end{equation}
where $\alpha$ is a root of $f$ in $\overline{\F_q(t)}$.
The set of elements $f_0(\alpha)+h(\alpha)^p,\,h\in\F_q[x]_{\le(n-1)/p}$ is sufficiently regular for us to extract useful information about the prime (divisor) factorization of its typical element, using an elementary but delicate sieve argument. Using (\ref{eq: disc norm}) we will be able to deduce that with probability close to 1, the discriminant $\disc(f)$ has a prime divisor $P$ of multiplicity 1 and therefore $\disc(f)\neq\const\cdot\square$.

For technical reasons we impose two additional conditions on $f_0$, which we will show are satisfied with probability close to 1. Namely, we require that $\Gal(f_0'(t,x)/\overline\F_q(x))=S_d$ and that $f_0$ satisfies the following
\\ \\
{\bf Condition ($\bigstar$).} Write $f_0'=\sum_{i=0}^db_i(x)t^i$. Then either $d=1$, or $b_1\neq 0$ and
$\frac{b_1}{(b_1,b_2)}\not\in\F_q[x^p]$.
\\ \\
We now state precisely the two main steps outlined above.

\begin{prop}\label{prop: derivative} Assume $q$ is odd. Let $f_0\in \mathcal F$ be uniformly random. If $f_0'\neq 0$ write $f_0'=c(x)g(t,x)$ with $c=\con_t(f_0')$. Then
\begin{multline}\P\left(f_0'\neq 0,\,g\mbox{ is absolutely irreducible and separable in }t,\,\Gal(g/\F_q(x))=S_d,\,(\bigstar)\mbox{ holds},\,\deg c<n^{1/2}\right)\\=1+O\left(q^{-n^{1/3}}\right).\end{multline}
\end{prop}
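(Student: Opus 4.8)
The plan is to treat $f_0' = \sum_{i=0}^d b_i(x) t^i$ as a random polynomial in $t$ of degree $\le d$ whose coefficients $b_i(x) \in \F_q[x]$ are determined by the coefficients $a_i(t)$ of $f_0$. Concretely, writing $f_0 = x^n + \sum a_i(t) x^i$ with each $a_i(t) = \sum_{j=0}^d a_{ij} t^j$, one has $f_0' = n x^{n-1} + \sum_{i=1}^{n-1} i\,a_i(t) x^{i-1}$, so each $b_j(x) = \sum_i j\cdot a_{ij} x^{i-1}$ (roughly; with the $x^{n-1}$ term thrown in when $p \nmid n$). The key observation — which is the crux of the whole subdivision strategy — is that as the $a_{ij}$ range uniformly and independently over $\F_q$, the coefficient vectors $(b_0(x), \ldots, b_d(x))$ range essentially uniformly and independently over $\F_q[x]_{\le n-1}^{\,d+1}$ modulo the obvious constraint coming from indices $i \equiv 0 \pmod p$ (where $i a_{ij} = 0$) and from the fixed top term; in particular the vector of "generic" coefficients of $f_0'$ is close to uniform. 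So the statement reduces to: \emph{a uniformly random polynomial $F(t,x) = \sum_{j=0}^d b_j(x) t^j$ with $b_j \in \F_q[x]_{\le D}$ ($D \asymp n$) is nonzero, has content $\con_t(F)$ of degree $< n^{1/2}$, has primitive part $g$ absolutely irreducible and separable in $t$ with $\Gal(g/\F_q(x)) = S_d$, and satisfies $(\bigstar)$, all with probability $1 + O(q^{-n^{1/3}})$.} The nonuniformity (the arithmetic progression of forbidden indices and the corrections near degree $n-1$) only affects a bounded-codimension piece and should be absorbed with routine but careful bookkeeping.

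Next I would handle each clause separately and take a union bound. \textbf{Content.} The probability that $\deg \con_t(F) \ge m$ is at most the probability that a fixed irreducible (or even linear) $P(x)$ of degree $m$ divides all $b_j$; summing over monic $P$ of degree $\ge n^{1/2}$ this is geometrically small, of size $O(q^{-c\cdot n^{1/2}(d+1)})$, comfortably inside the claimed error. \textbf{Absolute irreducibility of $g$ and $\Gal(g/\F_q(x)) = S_d$.} View $F$ as a point in the affine space $\mathbb A^{(d+1)(D+1)}$ of coefficient vectors. The locus where the primitive part fails to be absolutely irreducible, or fails to be separable in $t$, or has Galois group over $\F_q(x)$ (equivalently over $\overline{\F_q(x)}$, since we also want the full $S_d$) a proper subgroup of $S_d$, is contained in a proper Zariski-closed subset, and one needs an \emph{explicit, effective} bound on the number of $\F_q$-points there that beats $q^{(d+1)(D+1)}(1 - q^{-c n^{1/3}})$. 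The natural tool is a large-sieve / Galois-generic-fiber argument of Hilbert-irreducibility type with quantitative error terms: the generic fiber (over the space of $b_j$'s, with $x$ thought of as the base variable) has group $S_d$ because a generic degree-$d$ polynomial in $t$ does, and one bounds the density of the bad specializations. This is exactly the kind of statement that the paper attributes to reworking \cite[Lemma 9]{BaKo20} / the machinery of \cite{EnPo23_2} with explicit error terms; since we are permitted to cite \cite{EnPo23_2}, I would invoke its function-field large-sieve estimates directly, checking that the dimension count gives codimension $\ge 1$ and hence an exponentially small exceptional probability in the relevant degree parameter, which here is $\asymp n$ — giving an error far better than $q^{-n^{1/3}}$. \textbf{Condition $(\bigstar)$.} For $d \ge 2$ this asks that $b_1 \ne 0$ and that $b_1/(b_1,b_2) \notin \F_q[x^p]$. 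Failure of $b_1 = 0$ has probability $q^{-(D+1)}$; conditioning on $(b_1, b_2)$ and writing $b_1 = (b_1,b_2)\cdot u$, the event $u \in \F_q[x^p]$ is a codimension-$\asymp D/p$ linear condition on $u$, hence has probability $O(q^{-cD})$. All of these are $O(q^{-cn})$, so the union bound closes.

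The main obstacle is the second clause — simultaneously controlling absolute irreducibility of the primitive part \emph{and} pinning the Galois group to all of $S_d$ with an error term that is exponentially small in $n$ (not merely $o(1)$). The subtlety is twofold: first, $g$ is the \emph{primitive part} of a random $F$, so one must argue that stripping off the content does not destroy the genericity of the Galois group — this is fine because absolute irreducibility of $g$ already forces $\con_t(F) = \con_t(g)\cdot(\text{unit})$ to be coprime to the interesting structure, but it has to be said carefully; second, one needs a Hilbert-irreducibility estimate over $\F_q(x)$ that is uniform in the (large) degree $D \asymp n$ of the specialization and yields the $q^{-cD}$-type savings, rather than the soft $1 - o(1)$ that a naive Chebotarev/Lang–Weil argument gives. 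I expect the cleanest route is to quote the effective version of these estimates from \cite{EnPo23_2} (the paper signals that the proposition "follows from the results of \cite{EnPo23_2}") and spend the real work on verifying that the random model for $f_0'$ matches the uniform model on $F$ closely enough — i.e., that the arithmetic-progression defect in the indices $i$ does not shrink the effective dimension — which is a finite, explicit linear-algebra check over $\F_q$.
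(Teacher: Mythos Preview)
Your overall decomposition (Galois/irreducibility clause, content clause, $(\bigstar)$ clause, then a union bound) matches the paper's structure. But there is a genuine gap in the first clause, and you have misidentified the key input.

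The effective Hilbert irreducibility used here comes from \cite{BaEn21} (Propositions~\ref{prop: hilbert} and~\ref{prop: hilbert 2} in the paper), not from \cite{EnPo23_2} or \cite{BaKo20}; the paper does \emph{not} signal that this proposition follows from \cite{EnPo23_2}. More importantly, Hilbert irreducibility over $\F_q(x)$ only controls $\Gal(g/\F_q(x))$, and your parenthetical ``equivalently over $\overline{\F_q(x)}$'' conflates two different objects: knowing $\Gal(g/\F_q(x))=S_d$ does not by itself force $g$ to be absolutely irreducible --- for $d=2$ one can have $g$ splitting over $\F_{q^2}(x)$. (For $d\ge 3$ one can argue post hoc that any normal subgroup of $S_d$ with cyclic quotient contains $A_d$ and is therefore transitive, but you do not make this argument, and it fails for $d=2$.) The paper's device is to apply Hilbert irreducibility not to $\sum_j y_j t^j$ but to $\bigl(\sum_j y_j t^j\bigr)\cdot Q(t)$ with $Q\in\F_q[t]$ a fixed irreducible quadratic, so that the generic Galois group becomes $S_d\times C_2$. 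When a specialization preserves the full group $S_d\times C_2$, the splitting field of the primitive part $g$ is linearly disjoint from $\F_{q^2}(x)$; since the only nontrivial cyclic quotient of $S_d$ is $C_2$, this forces the field of constants of the splitting field of $g$ to be $\F_q$, hence $g$ is absolutely irreducible (Lemma~\ref{lem: disjoint}). This $Q(t)$ trick is the one nontrivial idea you are missing. The non-uniformity of the $b_j$ (the derivative-lattice constraint) is then handled not by ad hoc bookkeeping but by Proposition~\ref{prop: hilbert 2}, which allows specializing into arbitrary subsets $\mathcal A_j$ of size $\ge nq^{n/2}$; this is exactly where $p>2$ is used, since then $|\mathcal A_j|\gg q^{2n/3}$.

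Two smaller points. Your content bound implicitly treats the $b_j$ as uniform, but they are derivatives; the paper invokes the lattice estimate \cite[Lemma~6.2]{EnPi23} (recorded as~(\ref{eq: lattice}) inside Lemma~\ref{lem: lattice}) to bound $|\{h\in\F_q[x]_{\le n}:Q\mid h'\}|$, and the naive bound $\ll q^{n-\deg Q}$ only holds in the range $\deg Q\le n/2$. Your $(\bigstar)$ sketch is also too quick: the event $b_1/(b_1,b_2)\in\F_q[x^p]$ is not a linear condition on $(b_1,b_2)$ because of the GCD; the paper's Lemma~\ref{lem: star} stratifies over $k=\deg\gcd(b_1,b_2)$ and sums to obtain $O(q^{-n/2})$.
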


\begin{prop}\label{prop: disc} Assume $q$ is odd. Let $f_0\in\mathcal F$ be such that $f_0'=c(x)g(t,x),\,c=\con_t(f_0),\,\deg c< n^{1/2}$, $g$ is absolutely irreducible and separable in $t$ with $\Gal(g/\F_q(t))=S_d$ and ($\bigstar$) holds. Let $h\in\F_q[x]_{\le(n-1)/p}$ be uniformly random and denote $f(t,x)=f_0(t,x)+h(x)^p$. Then 
$$\P\left(\exists P\in\F_q[t]\mbox{ prime such that }n^{2/3}\le\deg P\le \frac{n}{3p\log^2n}, P\Vert\disc(f)\right)=1+O\left(\frac{\log^2n}{n^{1/3}}\right).$$
Consequently $$\P(\disc(f)\neq\const\cdot\square)=1+O\left(\frac{\log^2n}{n^{1/3}}\right).$$
\end{prop}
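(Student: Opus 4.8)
The plan is to analyze the factorization of $\disc(f)$ using the product formula (\ref{eq: disc prod over roots}). Since $f_0' = c(x) g(t,x)$ with $g$ absolutely irreducible over $\overline{\F_q}(x)$, the roots of $f_0'$ in $\overline{\F_q(t)}$ split into the roots of $c$ (which lie in $\overline{\F_q}$, of which there are $\deg c < n^{1/2}$, counted with multiplicity) and the roots of $g$, which form a single Galois orbit of size $d$ over $\F_q(t)$. The contribution of the roots of $c$ to $\disc(f)$ is bounded: it is a polynomial in $t$ of degree $O(n^{1/2}\cdot pn)$ coming from $\prod_{c(\alpha)=0}(f_0(\alpha)+h(\alpha)^p)$ — actually we should be more careful, but this factor has degree at most $\deg(f_0)\cdot\deg c = O(n\cdot n^{1/2})$ wait; more precisely each $f_0(\alpha)+h(\alpha)^p$ for $\alpha\in\overline{\F_q}$ lies in $\overline{\F_q}$, so this whole product is a \emph{constant}, contributing nothing to squarefreeness over $\F_q[t]$ up to a constant factor. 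Thus modulo constants, $\disc(f)$ equals $\pm\lc(f_0')^n$ times $N_{\F_q(t,\beta)/\F_q(t)}(f_0(\beta)+h(\beta)^p)$ where $\beta$ is a root of $g$; and $\lc(f_0')^n$ contributes only the $p$-th powers of primes dividing $\lc(f_0')$, which are $\const\cdot\square$ when $n$ is even and otherwise contribute primes to an odd power that we would have to track — I will handle the leading-coefficient factor separately (it depends only on $f_0$, so it either is or isn't a constant times a square, and if it isn't we are already done; if it is we proceed with the norm). So the core task is: show that for uniformly random $h\in\F_q[x]_{\le(n-1)/p}$, the norm $M_h := N_{\F_q(t,\beta)/\F_q(t)}(f_0(\beta)+h(\beta)^p)$ has, with probability $1+O(\log^2 n/n^{1/3})$, a prime divisor $P$ with $n^{2/3}\le\deg P\le n/(3p\log^2 n)$ dividing it exactly once.

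First I would set up the relevant ring: let $R = \F_q[t,\beta]/(\text{defining ideal})$, the integral closure being $O_L$ where $L = \F_q(t,\beta)$, and note $\beta$ is integral over $\F_q[t]$ with $[\,L : \F_q(t)\,] = d$. The element $\theta_h := f_0(\beta)+h(\beta)^p \in O_L$ has norm $M_h\in\F_q[t]$ of degree $\asymp d\cdot\deg_t(f_0)\cdot(\text{stuff})$; the key point from Condition $(\bigstar)$ and $\Gal(g/\F_q(x))=S_d$ is that as $h$ varies the family $\{\theta_h\}$ is ``generic enough'' — in particular the values $f_0(\beta)+h(\beta)^p$ at the $d$ conjugates of $\beta$ are algebraically independent-ish, which is what makes the sieve work. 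Then I would run an inclusion–exclusion / second-moment sieve: for a prime $P\in\F_q[t]$ of degree $m$ in the target range, estimate $\P(P \mid M_h)$ and $\P(P^2\mid M_h)$. The divisibility $P\mid M_h$ happens iff $\theta_h$ lands in some prime ideal of $O_L$ above $P$, which after reducing mod $P$ becomes a condition on $h\bmod(\text{something})$ in the finite ring $O_L/PO_L$; because $h$ has $\sim n/p$ free coefficients and $\deg P$ is at most $n/(3p\log^2 n)$, the reduction $h\bmod P$ (or rather $h(\beta)\bmod\mathfrak{P}$) is close to equidistributed, giving $\P(\mathfrak P\mid\theta_h)\approx q^{-m}$ up to error, and hence $\P(P\mid M_h)\approx (\text{number of split primes above }P)\cdot q^{-m}$, with $\P(P^2\mid M_h)\approx q^{-2m}$ times a bounded factor. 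I would then show $\sum_{P} \P(P\mid M_h) \to \infty$ (a logarithmic-size sum over primes of degree up to $n/(3p\log^2 n)$, divided appropriately) while the variance is controlled, so that by Chebyshev (or a Turán–Kubilius type inequality) with high probability $M_h$ has \emph{many} prime factors in the range, hence at least one of multiplicity exactly one; the explicit error $O(\log^2 n/n^{1/3})$ comes from optimizing the lower cutoff $n^{2/3}$ against the equidistribution error, which degrades as $\deg P$ grows.

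The hard part will be the equidistribution input: controlling $\P(\mathfrak P \mid \theta_h)$ and especially the joint statistics $\P(\mathfrak P_1\mathfrak P_2 \mid \theta_h)$ for distinct primes (needed for the variance), uniformly over primes $P$ up to degree $\asymp n/\log^2 n$, when $h$ ranges over a box of dimension $\asymp n/p$. This is where the absolute irreducibility of $g$, the condition $\Gal(g/\F_q(x)) = S_d$, and Condition $(\bigstar)$ get used: they guarantee that the map $h\mapsto (\text{residues of }\theta_h\text{ at primes above }P)$ is ``surjective with the right fibers,'' i.e. that there is no unexpected algebraic obstruction forcing $\theta_h$ into (or out of) a given prime ideal — essentially one needs the function $x\mapsto f_0(t,x)+h(x)^p$, viewed in the residue field extensions, to hit all residues as $h$ varies, which can fail if $f_0'$ has special structure in $\F_q[x^p]$ (precisely what $(\bigstar)$ excludes). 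Proving this uniformly in $P$ will likely require a Weil-type bound (Lang–Weil or Bombieri's estimate for exponential sums / point counts on the relevant curves over $\F_q[t]/P$) together with a careful treatment of the finitely many ``bad'' primes (ramified in $L/\F_q(t)$, or dividing $\lc(f_0')$, or where the curve degenerates), whose total contribution must be shown to be negligible. Once this equidistribution is in hand, the sieve and the final squarefree-obstruction conclusion $\disc(f)\neq\const\cdot\square$ (a single prime to the first power cannot appear in $au^2$) are routine.
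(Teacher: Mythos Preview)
Your high-level plan is sound --- sieve on primes dividing the norm $N_{L/\F_q(t)}(f_0(t,\alpha)+h(\alpha)^p)$ using equidistribution of $h(\alpha)^p$ --- and this is exactly the paper's strategy. However, several concrete points are wrong or missing, and one of them is the key technical insight.

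First, two factual errors. You claim that for a root $\alpha\in\overline\F_q$ of $c$ the quantity $f_0(t,\alpha)+h(\alpha)^p$ is a constant; it is not --- it is a polynomial in $t$ of degree $\le d$ (since $a_i\in\F_q[t]_{\le d}$), so the product over the roots of $c$ has degree $\le d\cdot\deg c<dn^{1/2}$. The paper uses this bound to argue that a prime $P$ of degree $m>d\cdot\deg c$ cannot divide it. Second, you write $[L:\F_q(t)]=d$; in fact $[L:\F_q(t)]=\deg_xg\approx n$, while the degree-$d$ extension is $L/K$ with $K=\F_q(\alpha)$.

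The latter confusion hides the main point. Equidistribution of $h(\alpha)\bmod\mathfrak P$ for $h\in\F_q[x]_{\le(n-1)/p}$ is \emph{not} a Weil-bound statement: it is just the Chinese Remainder Theorem, but it requires that $\alpha$ generate the residue field $\mathcal O_L/\mathfrak P$ over $\F_q$, i.e.\ that $\deg_K\mathfrak P=1$. This is not automatic and is why the paper restricts to a set $\mathcal S$ of ``good'' primes $\mathfrak p$ with $\deg_F\mathfrak p=\deg_K\mathfrak p=1$ (plus regularity conditions); Lang--Weil is used only to count $|\mathcal S_m|$. Your equidistribution step as written would fail at primes $\mathfrak P$ with $\deg_K\mathfrak P>1$.

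The second substantive gap is how to pass from ``$\mathfrak p$ divides $\theta_h$'' to ``$P\Vert\disc(f)$''. You need both $v_{\mathfrak p}(\theta_h)=1$ (not $\ge 2$) and $v_{\mathfrak q}(\theta_h)=0$ for every other $\mathfrak q|P$. The first is where Condition~$(\bigstar)$ actually enters: it forces $f_0(t,\alpha)\notin L^p$ and $L=K(f_0(t,\alpha))$, from which the paper deduces (via a resultant argument, Lemma~4.2(iv)) that $v_{\mathfrak p}(\theta_h)\le 1$ automatically at good primes --- there is no probabilistic estimate $\P(\mathfrak p^2|\theta_h)\approx q^{-2m}$ as you suggest; rather the probability is zero. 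The second is handled by the paper's definition of $E_m$ (``\emph{unique} divisor in $\mathcal S_m$'') and a separate estimate for primes $\mathfrak q|P$ with $\deg_F\mathfrak q>1$; your Tur\'an--Kubilius sketch does not address this uniqueness, and ``many prime divisors'' of $M_h$ does not by itself give any of multiplicity one in $\disc(f)$, since several $\mathfrak q$ above the same $P$ could each divide $\theta_h$ once.
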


We now show how propositions \ref{prop: derivative} and \ref{prop: disc} imply Theorem \ref{thm: disc}.

\begin{proof}[Proof of Theorem \ref{thm: disc}]

Let $\mathcal E$ be the set of $f_0\in\mathcal F$ satisfying the condition of Proposition \ref{prop: disc}. 
By Proposition \ref{prop: derivative}  we have \begin{equation}\label{eq: proof thm2 1}|\mathcal F\setminus\mathcal E|=O\left(q^{-n^{1/3}}|\mathcal F|\right).\end{equation}
For any $f_0\in\mathcal E$, by Proposition \ref{prop: disc} we have (using the notation (\ref{eq: def F_f0}))
\begin{equation}\label{eq: proof thm2 2}\left|\{f\in\mathcal F_{f_0}:\,\disc(f)=\const\cdot\square\}\right|=O\left(\frac{\log^2n}{n^{1/3}}|\mathcal F_{f_0}|\right).\end{equation}
Each $f\in\mathcal F$ can be written in exactly $q^{\lfloor\frac{n-1}p\rfloor+1}$ ways as $f(t,x)=f_0(t,x)+h(x)^p$ where $f_0\in\mathcal F,\,h\in\F_q[x]_{\le(n-1)/p}$ (note that as $h$ ranges over $\F_q[x]_{\le(n-1)/p}$, the set of values of $h(x)^p$ coincides with the set of values of $h(x^p)$) and $|\mathcal F_{f_0}|=q^{\lfloor\frac{n-1}p\rfloor+1}$ for any $f_0\in\mathcal F$. Therefore (using (\ref{eq: proof thm2 1}) and (\ref{eq: proof thm2 2}))
\begin{multline*}q^{\lfloor\frac{n-1}p\rfloor+1}\left|\{f\in\mathcal F:\disc(f)=\const\cdot\square\}\right|=\sum_{f_0\in\mathcal F}\left|\{f\in\mathcal F_{f_0}:\disc(f_0)=\const\cdot\square\}\right|\\=\sum_{f_0\in\mathcal E}\left|\{f\in\mathcal F_{f_0}:\disc(f_0)=\const\cdot\square\}\right|+\sum_{f_0\in\mathcal F\setminus\mathcal E}\left|\{f\in\mathcal F_{f_0}:\disc(f_0)=\const\cdot\square\}\right|
\\ \le O\left(|\mathcal F| q^{\lfloor\frac{n-1}p\rfloor+1}\frac{\log^{2}n}{n^{1/3}}\right)+|\mathcal F\setminus\mathcal E| q^{\lfloor\frac{n-1}p\rfloor+1}\le O\left(|\mathcal F|q^{\lfloor\frac{n-1}p\rfloor+1}\frac{\log^{2}n}{n^{1/3}}\right)+O\left(q^{-n^{1/3}}|\mathcal F| q^{\lfloor\frac{n-1}p\rfloor+1}\right)\\
=O\left(|\mathcal F|q^{\lfloor\frac{n-1}p\rfloor+1}\left(\frac{\log^{2}n}{n^{1/3}}+q^{-n^{1/3}}\right)\right)=|\mathcal F|q^{\lfloor\frac{n-1}p\rfloor+1}\cdot O\left(\frac{\log^{2}n}{n^{1/3}}\right).
\end{multline*}
Dividing by $|\mathcal F|q^{\lfloor\frac{n-1}p\rfloor+1}$ gives $\P(\disc(f)=\const\cdot\square)=O\left(\frac{\log^{2}n}{n^{1/3}}\right)$, which is the assertion of the theorem.
\end{proof}

\begin{rem} Our arguments can be refined to show that for any sequence $\omega(n)\to\infty$, $\disc(f)$ has $\ge\frac{\log n}{\omega(n)}$ prime factors of multiplicity one with limit probability 1. Heuristically the precise number of such factors should be $\sim\log n$ with limit probability 1. We omit the details because a single such prime factor is sufficient for our purposes and we obtain a better error term for the weaker statement.
\end{rem}

\begin{rem} Proposition \ref{prop: derivative} essentially says that for a random $f\in\mathcal F$, the Galois group of $f'$ over $\F_q(x)$ is $S_d$ asymptotically almost surely. Here typically $\deg_tf'=d$ and $d$ is fixed, so this is a large box model with coefficients restricted to lie in the set of derivatives of polynomials in $\F_q[x]$. Thus the first step in our argument is a large box result. A similar situation occurs in the proof of (\ref{eq: thm popov}) in \cite{EnPo23_2}.\end{rem}

It remains to prove propositions \ref{prop: derivative} and \ref{prop: disc}. Proposition \ref{prop: derivative} will be proved in \S\ref{sec: derivative}. The proof of Proposition \ref{prop: disc}, which is the technical core of the present work, occupies \S\ref{sec: disc}-\ref{sec: conclusion}.

\section{Irreducibility of the primitive part of $f'$}
\label{sec: derivative}

In the present section we prove Proposition \ref{prop: derivative}. The main tool is an explicit Hilbert irreducibility theorem for $\F_q(t)$, which was proved in \cite{BaEn21}. Before recalling its statement we introduce some notation and recall some basic facts about the Galois group of a specialization of a polynomial. 
Let $p$ be a prime (in the present section we do not have to assume $p>2$ except in Lemma \ref{lem: lattice} and the proof of Proposition \ref{prop: derivative}), $q$ a power of $p$, $m,d$ natural numbers, $\bs y=(y_1,\ldots,y_m)$ a set of $m$ independent variables and $x,t$ additional independent variables. Throughout the present section by a separable polynomial we always mean separable in the variable $t$. Let $F\in\F_q[x,\bs y,t]$ be separable with $\deg_tF=d$ and denote $G=\Gal(F/\F_q(x,\bs y))$. Let $\bs\eta=(\eta_1,\ldots,\eta_m)\in\F_q[x]^m$ be such that $F(x,\bs\eta,t)$ is separable with $\deg_tF(x,\bs\eta,t)=d$ and denote $G_{\bs\eta}=\Gal(F(x,\bs\eta,t)/\F_q(x))$. Then one may view $G_{\boldsymbol\eta}$ as a subgroup of $G$ defined up to conjugation (this well-known fact can be deduced as a special case of \cite[Theorem VII.2.9]{Lan02}). Denote
$$\mathcal H_F=\{\bs\eta\in\F_q[x]^m:\,F(x,\bs\eta,t)\mbox{ separable of degree }d,\,G_{\bs\eta}=G\}.$$

\begin{prop}\label{prop: hilbert} 
Let $C>0$ be a constant, $F\in\F_q[x,\bs y,t]$ be separable with Galois group $G=\Gal(F/\F_q(x,\bs y))$ and $\deg_xF\le n^C$. With notation as above we have
$$\left|\mathcal H_F\cap\left(\F_q[x]_n^\mon\right)^m\right|=q^{nm}(1+O_{q,m,C,\deg_{\bs y,t}F}(nq^{-n/2})).$$
Here the implicit constant may depend on $q,m,C,\deg_{\bs y,t}F$ (the total degree of $F$ in the variables $y_1,\ldots,y_m,t$).

\end{prop}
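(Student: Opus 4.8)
Up to the translation of notation introduced above, the statement is the explicit Hilbert irreducibility theorem of \cite{BaEn21}; the plan is to quote that result for the contribution of specializations where the Galois group genuinely drops, and to dispose of the degenerate specializations by an elementary count. Decompose the complement of $\mathcal H_F$ inside $\left(\F_q[x]^\mon_n\right)^m$ as a disjoint union of the degenerate locus $\mathcal D=\{\bs\eta:\ F(x,\bs\eta,t)\text{ is not separable of }t\text{-degree }d\}$ and the Galois-drop locus $\mathcal G=\{\bs\eta:\ F(x,\bs\eta,t)\text{ separable of }t\text{-degree }d,\ G_{\bs\eta}\subsetneq G\}$. Since $\left|\left(\F_q[x]^\mon_n\right)^m\right|=q^{nm}$, it suffices to prove $|\mathcal D|+|\mathcal G|=O\!\left(nq^{nm-n/2}\right)$.

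For the degenerate locus, observe that $\bs\eta\in\mathcal D$ precisely when the fixed nonzero polynomial $D:=\lc_t(F)\cdot\disc_t(F)\in\F_q[x,\bs y]$ (which has $\bs y$-degree $O_{\deg_{\bs y,t}F}(1)$ and is nonzero because $F$ is separable of $t$-degree $d$) vanishes at $\bs\eta$, i.e. $D(x,\bs\eta)=0$ in $\F_q[x]$. A routine induction on the number of $\bs y$-variables, using at each step that a nonzero polynomial of bounded degree in a single variable over the field $\F_q(x)$ has $O(1)$ roots there, gives $\#\{\bs\eta\in(\F_q[x]^\mon_n)^r:\ D(x,\bs\eta)=0\}=O(q^{nr-n})$ for any nonzero $D$ of bounded $\bs y$-degree in $r$ variables; with $r=m$ this yields $|\mathcal D|=O(q^{nm-n})$, which is absorbed into the asserted error.

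The locus $\mathcal G$ is exactly what the quantitative Hilbert irreducibility theorem of \cite{BaEn21} controls: applied to $F$ — whose only relevant invariants are $G$, $\deg_{\bs y,t}F$, and the bound $\deg_x F\le n^C$ — it gives $|\mathcal G|=O_{q,m,C,\deg_{\bs y,t}F}(nq^{nm-n/2})$, the hypotheses (separability of $F$ in $t$, the polynomial degree bound) being immediate. Should the version available in \cite{BaEn21} be stated for a single specialization variable, I would reduce to it by specializing $y_m,\dots,y_1$ one at a time: at each step the number of specializations causing a drop is $O(nq^{-n/2})$ times the number of choices for the not-yet-specialized variables, while the $x$-degree of the partially specialized polynomial grows only to $O(n\deg_{\bs y}F+n^C)$ and so stays $\le n^{C'}$ for a fixed $C'$, so the error term survives each step; summing the $m$ contributions gives the same bound. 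Combining with the estimate for $\mathcal D$ and subtracting from $q^{nm}$ proves the proposition.

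The real difficulty lies entirely inside \cite{BaEn21}, which I would invoke as a black box: it translates $G_{\bs\eta}\subsetneq G$ into the existence of an $\F_q(x)$-rational root of a resolvent polynomial attached to one of the finitely many maximal subgroups of $G$, and counts the resulting $\bs\eta$ by a Weil-type estimate, the savings $q^{-n/2}$ reflecting that these resolvents have $x$-degree polynomial in $n$. The one point demanding care in the reduction is that $G$ here is the arithmetic Galois group over $\F_q(x,\bs y)$: although $\F_q$ is algebraically closed in the rational function field $\F_q(x,\bs y)$, the splitting field of $F$ may nonetheless have a nontrivial constant field, so one must ensure that the form of \cite{BaEn21} being cited also tracks the cyclic constant-field quotient of $G$ under specialization. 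This is the usual delicate point in function-field Hilbert irreducibility and is handled there.
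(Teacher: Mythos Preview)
Your proposal is correct and takes essentially the same approach as the paper: both derive the statement directly from \cite[Corollary 3.5]{BaEn21}. The paper's proof is a one-line citation, whereas you have additionally spelled out the decomposition into the degenerate locus $\mathcal D$ and the Galois-drop locus $\mathcal G$ and bounded $|\mathcal D|$ separately; this extra detail is harmless (and the cited corollary in fact already absorbs both contributions).
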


\begin{proof} Follows immediately from \cite[Corollary 3.5]{BaEn21}.\end{proof}

We need a slight refinement of the above proposition.

\begin{prop}\label{prop: hilbert 2}
In the setup of Proposition \ref{prop: hilbert} let $\mathcal A_i\subset\F_q[x]_n^\mon,\,1\le i\le m$ be subsets with $|\mathcal A_i|\ge nq^{n/2}$. Then
$$|\mathcal H_F\cap (\mathcal A_1\times\cdots\times\mathcal A_m)|=\left(\prod_{i=1}^m|\mathcal A_i|\right)\cdot\left(1+O_{q,m,C,\deg_{\bs y,t} F}\left(\max_{1\le i\le m}\frac{nq^{n/2}}{|\mathcal A_i|}\right)\right).$$
The same is true if we require $\mathcal A_i\subset\F_q[x]_{\le n-1}$ instead.

\end{prop}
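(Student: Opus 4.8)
The plan is to derive Proposition \ref{prop: hilbert 2} from Proposition \ref{prop: hilbert} by a straightforward inclusion-exclusion (or rather a union bound) argument, exploiting the fact that $\mathcal H_F$ has density essentially $1$ inside the full box of monic polynomials of degree $n$. First I would pass to the complementary set: write $\mathcal B=\left(\F_q[x]_n^\mon\right)^m\setminus\mathcal H_F$, so that by Proposition \ref{prop: hilbert} we have $|\mathcal B|=O(nq^{n/2}\cdot q^{n(m-1)})=O(nq^{nm-n/2})$. The point is that $\mathcal B$ is a \emph{small} subset of the product box, and we want to show it cannot occupy too large a fraction of any sub-box $\mathcal A_1\times\cdots\times\mathcal A_m$.

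The key step is the trivial bound $|\mathcal H_F\cap(\mathcal A_1\times\cdots\times\mathcal A_m)|=\prod_{i=1}^m|\mathcal A_i|-|\mathcal B\cap(\mathcal A_1\times\cdots\times\mathcal A_m)|$ together with $|\mathcal B\cap(\mathcal A_1\times\cdots\times\mathcal A_m)|\le|\mathcal B|=O(nq^{nm-n/2})$. To convert this absolute bound into the relative error claimed, I would note that for each index $i$ we have $\prod_{j=1}^m|\mathcal A_j|\ge|\mathcal A_i|\cdot\prod_{j\ne i}1$... this is too weak; instead use $\prod_{j=1}^m|\mathcal A_j|\ge|\mathcal A_i|$ is also too weak. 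The right comparison is: since $|\mathcal A_j|\ge nq^{n/2}$ for all $j$, we have $\prod_{j\ne i}|\mathcal A_j|\ge (nq^{n/2})^{m-1}$, hence
$$\frac{|\mathcal B|}{\prod_{j=1}^m|\mathcal A_j|}\le\frac{O(nq^{nm-n/2})}{|\mathcal A_i|\cdot(nq^{n/2})^{m-1}}=O\!\left(\frac{nq^{nm-n/2}}{|\mathcal A_i|\,n^{m-1}q^{n(m-1)/2}}\right)=O\!\left(\frac{q^{n(m+1)/2}}{n^{m-2}|\mathcal A_i|}\right),$$
which for $m\ge 1$ is certainly $O(nq^{n/2}/|\mathcal A_i|)$ when $m=1$; for $m\ge2$ one gets an even stronger bound, and in all cases it is dominated by $\max_i nq^{n/2}/|\mathcal A_i|$ after absorbing constants depending on $m$. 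Taking the minimum over $i$ (equivalently, using the index $i$ realizing $\max_i nq^{n/2}/|\mathcal A_i|$ gives the weakest such bound, which is what we state) yields the claimed error term. The case $\mathcal A_i\subset\F_q[x]_{\le n-1}$ is handled identically: Proposition \ref{prop: hilbert} (or rather \cite[Corollary 3.5]{BaEn21}, which the paper has already invoked in that form) gives the analogous density statement over $\left(\F_q[x]_{\le n-1}\right)^m$, since $|\F_q[x]_{\le n-1}|=q^n$, the same size as $\F_q[x]_n^\mon$, and the exceptional set again has size $O(nq^{nm-n/2})$.

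I do not anticipate a genuine obstacle here — this is a soft counting argument — but the one point requiring a little care is making sure that Proposition \ref{prop: hilbert} as literally stated (density over the box of \emph{monic degree exactly $n$} polynomials, or over $\F_q[x]_{\le n-1}$) is the version we need, and that the implied constant's dependence on $\deg_{\bs y,t}F$ and $C$ is preserved. Concretely I would remark that \cite[Corollary 3.5]{BaEn21} is stated uniformly enough to cover both boxes, and that the bound on $\deg_x F\le n^C$ is used only through Proposition \ref{prop: hilbert}. The rest is the elementary inequality manipulation above, which I would present compactly without belaboring the constants.
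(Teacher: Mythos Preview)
Your approach works for $m=1$ but fails for $m\ge 2$. The crude bound $|\mathcal B\cap(\mathcal A_1\times\cdots\times\mathcal A_m)|\le|\mathcal B|$ is far too weak: take $m=2$ and $|\mathcal A_1|=|\mathcal A_2|=q^{3n/4}$ (well above $nq^{n/2}$). Then $\prod_j|\mathcal A_j|=q^{3n/2}$ while $|\mathcal B|=O(nq^{3n/2})$, so your ratio $|\mathcal B|/\prod_j|\mathcal A_j|$ is only $O(n)$, which is vacuous, whereas the claimed error term is $nq^{n/2}/q^{3n/4}=nq^{-n/4}$. Your displayed computation also contains an arithmetic slip (the correct simplification is $n^{2-m}q^{nm/2}/|\mathcal A_i|$, not $q^{n(m+1)/2}/(n^{m-2}|\mathcal A_i|)$), but either expression refutes the assertion that ``for $m\ge2$ one gets an even stronger bound'': $n^{2-m}q^{nm/2}$ dominates $nq^{n/2}$ for every $m\ge 2$.

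The paper instead inducts on $m$, and this structure is essential. The key observation is that the bad set $\mathcal B$ is \emph{fibered}: if for some $\eta_1\in\F_q[x]_n^\mon$ the specialized polynomial $F(x,\eta_1,\hat{\bs y},t)$ fails to be separable of degree $d$ with Galois group $G$, then the entire slice $\{\eta_1\}\times(\F_q[x]_n^\mon)^{m-1}$ misses $\mathcal H_F$. Hence the set $E$ of such bad $\eta_1$ satisfies $|E|\cdot q^{n(m-1)}\le|\mathcal B|=O(nq^{nm-n/2})$, giving $|E|=O(nq^{n/2})$ independently of $m$. For each $\eta_1\in\mathcal A_1\setminus E$ one applies the induction hypothesis to $F(x,\eta_1,\hat{\bs y},t)$ over $\mathcal A_2\times\cdots\times\mathcal A_m$; the exceptional $\eta_1$ contribute a relative error $O(nq^{n/2}/|\mathcal A_1|)$. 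Your global bound cannot detect this: it treats $\mathcal B$ as an unstructured set of its size, which for all you know could sit entirely inside the sub-box.
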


\begin{proof} The last assertion follows by applying the main assertion to $F(x,y_1+t^n,\ldots,y_m+t^n,t)$ instead of $F$. Hence we only need to deal with the case $\mathcal A_i\subset\F_q[x]_n^\mon$.

For the present proof we assume that $q,m,C,\deg_{\bs y,t}F$ are fixed and hence will be omitted from asymptotic notation. We will prove the assertion by induction on $m$. If $m=1$ we have (using Proposition \ref{prop: hilbert})
$$|\mathcal A_1|\ge|\mathcal H_F\cap \mathcal A_1|\ge|\mathcal A_1|-\left|(\F_q[x]^\mon_n\setminus\mathcal H_f)\right|=|\mathcal A_1|-O(nq^{n/2})=|\mathcal A_1|\left(1+O\left(\frac {nq^{n/2}}{|\mathcal A_1|}\right)\right)$$
and the assertion follows.

Assume $m\ge 2$ and that the assertion holds for $m-1$. We denote $$\hat{\bs y}=(y_2,\ldots,y_m),\quad\hat{\bs \eta}=(\eta_2,\ldots,\eta_m),\quad\widehat{\mathcal A}=\mathcal A_2\times\cdots\times \mathcal A_m.$$ Denote also $d=\deg_tF$.
First we claim that 
\begin{equation}\label{eq: spec}\bigl|\{\eta_1\in\F_q[x]_n^\mon:\,F(x,\eta_1,\hat{\bs y},t)\mbox{ not separable of degree $d$ or }\Gal(F(x,\eta_1,\hat{\bs y},T)/\F_q(x,\hat{\bs y}))\neq G\}\bigr|=O(nq^{n/2}).\end{equation}
Indeed for any $\eta_1$ from the set $E$ appearing on the LHS of (\ref{eq: spec}) and any $\hat {\bs \eta}\in(\F_q[x]^\mon_n)^{m-1}$ and $\bs\eta=(\eta_1,\hat{\bs \eta})$, either $F(x,\bs\eta,t)$ is not separable of degree $d$ or the group $G_{\bs \eta}\leqslant \Gal(F(x,\eta_1,\hat{\bs y},t)/\F_q(x,\hat{\bs y}))<G$ is smaller than $G$ and therefore $\bs\eta\not\in \mathcal H_F$. 
Consequently $(E\times(\F_q[x]^\mon_n)^{m-1})\cap \mathcal H_F=\emptyset$ and therefore by Proposition \ref{prop: hilbert} we must have $E=O(nq^{n/2})$ as claimed.

Now for any $\eta_1\not\in E$ we may apply the induction hypothesis to the polynomial $F(x,\eta_1,\hat{\bs y},t)$, which is separable of degree $d$ and has Galois group $G$, and conclude that
$$\Bigl|\mathcal H_F\cap \left(\{\eta_1\}\times\widehat{\mathcal A}\right)\Bigr|=|\widehat{\mathcal A}|\left(1+O\left(\max_{2\le i\le m}\frac{nq^{n/2}}{|\mathcal A_i|}\right)\right).$$
Summing over $\eta_1\in\mathcal A_1\setminus E$, noting that $|\mathcal A_1\setminus E|=1+O\left(\frac{nq^{n/2}}{|\mathcal A_1|}\right)$ (by (\ref{eq: spec})) and using the assumption $|\mathcal A_1|\ge nq^{n/2}$ we obtain
\begin{equation*}\begin{split}\prod_{i=1}^m|\mathcal A_i|\ge
|\mathcal H_F\cap (\mathcal A_1\times\cdots\times\mathcal A_m)|
&\ge|\widehat{\mathcal A}|\left(1+O\left(\max_{2\le i\le m}\frac{nq^{n/2}}{|\mathcal A_i|}\right)\right)\left(1+O\left(\frac{nq^{n/2}}{|\mathcal A_1|}\right)\right)\\&=\left(\prod_{i=1}^m|\mathcal A_i|\right)\cdot\left(1+O\left(\max_{1\le i\le m}\frac{nq^{n/2}}{|\mathcal A_i|}\right)\right).\end{split}\end{equation*}
The assertion follows.
\end{proof}

\begin{lem}\label{lem: disjoint}  Let $Q\in\F_q[t]$ be quadratic and irreducible, $H\in\F_q[x,t]$ separable with $\deg_tH=d$. Assume that $\Gal(H(x,t)Q(t)/\F_q(x))\cong S_d\times C_2$ ($C_2$ is the cyclic group of order 2). Then $H(x,t)=c(x)g(x,t)$ where $c=\con_t(H)$ and $g$ is absolutely irreducible and separable with $\Gal(g/\F_q(x))=S_d$.
\end{lem}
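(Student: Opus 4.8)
The plan is to first extract the content factorization $H = c(x) g(x,t)$, where $c = \con_t(H)$, and then show that the group-theoretic hypothesis $\Gal(H(x,t)Q(t)/\F_q(x)) \cong S_d \times C_2$ forces $g$ to be absolutely irreducible and separable over $\F_q(x)$ with $\Gal(g/\F_q(x)) = S_d$. The first reduction is that $c(x)$ must in fact be constant, hence $g = H/c$ has the same splitting field over $\F_q(x)$ as $H$; indeed if $c$ had positive degree, then $H$ would factor nontrivially over $\F_q(x)[t]$ and the Galois group of $H$ over $\F_q(x)$ would be a proper subgroup of a product of symmetric groups of smaller degrees summing to $d$, which cannot surject onto $S_d$ — but projection of $\Gal(H Q/\F_q(x)) \cong S_d \times C_2$ onto the factor corresponding to $H$ (the decomposition group of the splitting field of $H$ inside that of $HQ$) must be all of $S_d$ since $S_d$ is the unique maximal... — more carefully, the first projection $\Gal(HQ/\F_q(x)) \to \Gal(H/\F_q(x))$ is surjective, so $\Gal(H/\F_q(x))$ is a quotient of $S_d \times C_2$ that also embeds in $S_d$; the quotients of $S_d \times C_2$ are (for $d \ge 2$, $d \ne$ small degenerate cases) essentially $1, C_2, S_d, S_d \times C_2$, and only $S_d$ and $1$ embed in $S_d$, with $1$ impossible since $\deg_t H = d \ge 1$. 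Hence $\Gal(H/\F_q(x)) \cong S_d$, which already gives that $H/\con_t(H) =: g$ is absolutely irreducible and separable (a polynomial with full symmetric Galois group is irreducible, and absolute irreducibility follows because the constant field extension inside the splitting field would be central, contradicting $Z(S_d) = 1$ for $d \ge 3$; the small cases $d \le 2$ are handled directly).

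The real content of the lemma is the word "absolutely," i.e. ruling out a constant field extension. Here is where the factor $Q(t)$ does its work. Let $L$ be the splitting field of $g$ over $\F_q(x)$ and let $\F_{q^s}$ be the algebraic closure of $\F_q$ in $L$; we want $s = 1$. Suppose $s > 1$. Since $Q$ is an irreducible quadratic over $\F_q(t)$... wait — $Q \in \F_q[t]$, so over $\F_q(x)$ it has splitting field $\F_q(x)(\sqrt{\disc Q})$... — actually $Q(t)$ as a polynomial in $t$ with coefficients in $\F_q$, viewed over the field $\F_q(x)$, splits in the quadratic constant extension $\F_{q^2}(x)$ (as $Q$ is irreducible quadratic over $\F_q$). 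So the splitting field of $Q$ over $\F_q(x)$ is exactly $\F_{q^2}(x)$, with Galois group $C_2$. Now the splitting field of $HQ = c \cdot g \cdot Q$ over $\F_q(x)$ is the compositum $L \cdot \F_{q^2}(x)$ together with the splitting field of $c(x)$ — but $c(x) \in \F_q[x]$ and if $\deg c > 0$ its roots generate a nontrivial extension of $\F_q(x)$, contributing further to the Galois group; this is one more reason $c$ must be constant, since $\Gal(c g Q / \F_q(x)) \cong S_d \times C_2$ would otherwise be too small or have the wrong structure. With $c$ constant, $\Gal(HQ/\F_q(x)) = \Gal(L \cdot \F_{q^2}(x)/\F_q(x))$. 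If $\F_{q^2} \subset \F_{q^s} = L \cap \overline{\F_q}$, then $\F_{q^2}(x) \subset L$, so $L \cdot \F_{q^2}(x) = L$ and the Galois group is $\Gal(L/\F_q(x))$, which has order $|S_d| = d!$, but $|S_d \times C_2| = 2 \cdot d!$ — contradiction. If $\F_{q^2} \not\subset \F_{q^s}$, then $[\,L \cdot \F_{q^2}(x) : L\,] = 2$ and the Galois group has order $2 |{\Gal(L/\F_q(x))}| = 2 s \cdot d!/s$... — hmm, I need $|\Gal(L/\F_q(x))|$; since $L/\F_q(x)$ has constant field $\F_{q^s}$, we have $\Gal(L/\F_{q^s}(x)) \cong \Gal(g/\F_{q^s}(x))$ which (by absolute irreducibility, once we know $g$ stays irreducible geometrically — but that is exactly $s=1$...). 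Let me restructure: $\Gal(L/\F_q(x))$ fits in $1 \to \Gal(L/\F_{q^s}(x)) \to \Gal(L/\F_q(x)) \to \Gal(\F_{q^s}/\F_q) \to 1$ with the quotient cyclic of order $s$. The hypothesis gives $\Gal(L/\F_q(x)) \times C_2 \cong \Gal(HQ/\F_q(x))$ when $\F_{q^2} \not\subset L$, forcing $\Gal(L/\F_q(x)) \cong S_d$; but $S_d$ for $d \ge 5$ has no quotient of order $s > 1$ except $C_2$ (via sign), so $s \in \{1, 2\}$, and $s = 2$ would put $\F_{q^2} \subseteq L$, the contradictory case. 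Hence $s = 1$, i.e. $g$ is absolutely irreducible, and $\Gal(g/\F_q(x)) = \Gal(L/\F_q(x)) \cong S_d$ (using, in the $\F_{q^2} \not\subset L$ branch, that the product decomposition $S_d \times C_2$ projects onto $S_d$ on the $L$-factor).

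The main obstacle I anticipate is bookkeeping the small-degree cases $d \le 4$, where $S_d$ has extra quotients (e.g. $S_4 \twoheadrightarrow S_3$, $S_3 \twoheadrightarrow C_2$, and $S_2 = C_2$ itself), so the clean statement "$S_d$ has only the sign quotient" fails and one must argue more carefully — possibly these cases are genuinely excluded or handled by a direct check, or by the fact that $\Gal(HQ/\F_q(x)) \cong S_d \times C_2$ is only invoked in the paper for the relevant fixed $d$ and one can afford case analysis. The other point requiring care is the interaction of the constant-field extension with the factor $Q$: one must make sure that "$Q$ irreducible quadratic over $\F_q$" indeed means its splitting field over $\F_q(x)$ is precisely $\F_{q^2}(x)$ and that $HQ$ being separable in $t$ (given) guarantees $g$ and $Q$ share no common $t$-root, so the splitting field of $HQ$ is genuinely the compositum — this is where separability of $HQ$ in $t$ enters and why it is hypothesized. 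Once these are nailed down, the group-theoretic squeeze above closes the argument.
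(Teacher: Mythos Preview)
Your overall strategy matches the paper's: show $\Gal(H/\F_q(x))=S_d$, then show the splitting field of $H$ over $\F_q(x)$ has constant field $\F_q$, hence $g$ is absolutely irreducible. However, you have a recurring confusion about $c(x)=\con_t(H)$ that produces false statements. You assert that $c$ must lie in $\F_q$ and that a positive-degree $c$ would make $H$ factor nontrivially in $\F_q(x)[t]$ or contribute roots to the splitting field of $HQ$. This is wrong: $c\in\F_q[x]$ is a polynomial in $x$ alone, hence a \emph{unit} in $\F_q(x)$; as a polynomial in $t$ over $\F_q(x)$ it is a nonzero constant with no roots. Thus $H$ and $g=H/c$ automatically have the same splitting field over $\F_q(x)$ regardless of $\deg_x c$, and the lemma makes no claim that $c$ is constant. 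Once you drop the spurious attempts to force $c\in\F_q$, your argument is sound. The cleanest way to get $\Gal(H/\F_q(x))=S_d$ is by order count: the splitting field of $HQ$ is the compositum of that of $H$ with $\F_{q^2}(x)$, so $2\cdot d!=|\Gal(HQ/\F_q(x))|\le 2\cdot|\Gal(H/\F_q(x))|\le 2\cdot d!$, forcing $\Gal(H/\F_q(x))=S_d$ and linear disjointness of the two splitting fields.

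Your worry about small $d$ is also unnecessary. The paper avoids any case analysis by observing that the algebraic closure $\F_{q^s}$ of $\F_q$ inside the splitting field $K$ of $H$ gives a cyclic quotient $\Gal(\F_{q^s}/\F_q)$ of $\Gal(K/\F_q(x))=S_d$; for \emph{every} $d\ge 1$ the only cyclic quotients of $S_d$ are trivial or $C_2$, so $s\le 2$. But $s=2$ would put $\F_{q^2}(x)\subset K$, contradicting the linear disjointness just established. Hence $s=1$, giving $\Gal(g/\overline\F_q(x))=\Gal(g/\F_q(x))=S_d$ and absolute irreducibility via Gauss's lemma. No separate treatment of $d\le 4$ is needed.
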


\begin{proof} Let $K,\F_{q^2}(x)$ be the respective splitting fields of $H,Q$ over $\F_q(x)$. Since $\Gal(Q/\F_q(x))=C_2,$ $\Gal(H/\F_q(x))\leqslant S_d$, our assumption implies that $\Gal(K/\F_q(x))=S_d$ and $K,\F_{q^2}(x)$ are linearly disjoint over $\F_q(x)$. Since the only cyclic quotient of $S_d$ is $C_2$, the algebraic closure of $\F_q$ in $K$ is contained in $\F_{q^2}$ and by the above linear disjointness it must equal $\F_q$. Consequently $K$ is linearly disjoint from $\overline \F_q$ over $\F_q$ and therefore $\Gal(H/\overline\F_q(x))=\Gal(H/\F_q(x))=S_d$ and $H$ is irreducible over $\overline \F_q(x)$. By Gauss's lemma we have $H(x,t)=c(x)g(t,x)$ with $c=\con_t(H)$ and $g$ irreducible in $\overline\F_q[x,t]$ with $\Gal(g/\F_q(x))=\Gal(H/\F_q(x))=S_d$, as required.
\end{proof}

\begin{lem}\label{lem: star} Let $f_0\in\mathcal F$ be uniformly random, where $\mathcal F$ is given by (\ref{eq: def F}). Then $\P((\bigstar)\mbox{ holds})=1+O(q^{-n/2})$ (Condition ($\bigstar$) was defined in \S\ref{sec: outline thm2}).
\end{lem}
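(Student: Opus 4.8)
The plan is to strip off the degenerate event $b_1=0$ and to reduce the remaining event to a sieve over the squarefree ``$p$‑primitive radical'' of $b_1$.

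\emph{Set‑up.} For $d=1$ condition ($\bigstar$) holds automatically, so assume $d\ge 2$. Writing $a_l(t)=\sum_{i=0}^d a_{l,i}t^i$ with the $a_{l,i}\in\F_q$ independent and uniform, one has $b_1(x)=\sum_{l=1}^{n-1}l\,a_{l,1}x^{l-1}$ and $b_2(x)=\sum_{l=1}^{n-1}l\,a_{l,2}x^{l-1}$. Since $a\mapsto la$ is the zero map when $p\mid l$ and a bijection of $\F_q$ when $p\nmid l$, and since $b_1$ and $b_2$ involve disjoint families $\{a_{l,1}\}$, $\{a_{l,2}\}$, the pair $(b_1,b_2)$ is uniformly distributed on $V\times V$, where $V\subseteq\F_q[x]_{\le n-2}$ is the $\F_q$‑span of the monomials $x^m$ with $0\le m\le n-2$ and $m\not\equiv p-1\pmod p$. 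Hence $\dim_{\F_q}V=N:=(n-1)-\lfloor(n-1)/p\rfloor\ge(n-1)(p-1)/p\ge(n-1)/2$, so $\P(b_1=0)=q^{-N}=O(q^{-n/2})$. As $\F_q$ is perfect, $\F_q[x^p]=\F_q[x]^p$, so ($\bigstar$) fails exactly when $b_1=0$ or ($b_1\ne 0$ and $b_1/\gcd(b_1,b_2)$ is a $p$‑th power in $\F_q[x]$); it remains to bound the probability of the latter.

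\emph{A divisibility constraint and the sieve.} Suppose $b_1\ne 0$ and $b_1/\gcd(b_1,b_2)=u^p$. Writing $b_1=\operatorname{lc}(b_1)\prod_P P^{e_P}$ over monic irreducibles $P$, comparison of $P$‑adic valuations forces $\max(0,e_P-v_P(b_2))\equiv 0\pmod p$ for all $P$; since $e_P\ge 1$ and $e_P\not\equiv 0\pmod p$ imply $v_P(b_2)\ge 1$, the squarefree polynomial $R(b_1):=\prod_{P:\,p\nmid v_P(b_1)}P$ — which is the monic part of $b_1/\gcd(b_1,b_1')$ — divides $b_2$. By independence of $b_1$ and $b_2$ the probability above is at most
\[
\sum_{R\ \mathrm{monic\ squarefree}}\ \P\bigl(R(b_1)=R,\ b_1\ne 0\bigr)\,\cdot\,\P_{b_2}\bigl(R\mid b_2\bigr).
\]
Two estimates then close this sum. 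First, a counting bound: every $b_1$ with $R(b_1)=R$ has the form $b_1=\operatorname{lc}(b_1)\,s\,w^p$ with $s$ monic $p$‑th‑power‑free, $\operatorname{rad}(s)=R$, $w$ monic, $\deg s+p\deg w\le n-2$; summing over such $s$ (using $\sum_s q^{-\deg s/p}\le C^{\omega(R)}q^{-\deg R/p}$) and over the admissible $w$ gives $\P\bigl(R(b_1)=R,\ b_1\ne 0\bigr)\le q^{(n-2)/p-N+O(1)}C^{\omega(R)}q^{-\deg R/p}$. Second — the technical heart — an equidistribution bound $\P_{b_2}(R\mid b_2)\le q^{-\beta\deg R+O(1)}$ for all squarefree $R$ of degree $\le n-2$, with $\beta=\tfrac{p-1}{p}$; equivalently, an upper bound for $\dim_{\F_q}\!\bigl(V\cap R\F_q[x]\bigr)$, i.e.\ a count of the multiples of $R$ inside the thin lattice $V$, which one does one prime at a time using squarefreeness of $R$ (the reduction $V\to\F_q[x]/P$ is onto when $\deg P\le p-1$, and has corank $O(\deg P/p)$ in general — this is where $p>2$ enters). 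Substituting both estimates, the tail is at most $q^{(n-2)/p-N+O(1)}\sum_{\deg R\le n-2}C^{\omega(R)}q^{-(\beta+1/p)\deg R}$; at $\beta=\tfrac{p-1}{p}$ the Euler product $\prod_P(1+Cq^{-(\beta+1/p)\deg P}+\cdots)$ diverges only logarithmically, so this sum is $q^{o(n)}$ and the tail is $q^{(n-2)/p-N+o(n)}=q^{-(p-2)n/p+o(n)}$. Combined with $\P(b_1=0)=O(q^{-n/2})$ this yields the stated $O(q^{-n/2})$ when $p\ge 5$; for $p=3$ the same argument only gives $O(q^{-n/3})$ — the loss coming entirely from the term $R=1$, i.e.\ from $b_1$ being a $p$‑th power — but this is still $\ll q^{-n^{1/3}}$, which is all that is used in Proposition~\ref{prop: derivative}.

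I expect the equidistribution step to be the main obstacle: the lattice $V$ omits one monomial out of every $p$ (those at degrees $\equiv p-1\pmod p$), and a naive reduction‑mod‑$R$ argument controls only the range $\deg R\le p-1$; for larger $R$ one must genuinely exploit squarefreeness and pass to a $p$‑adic, prime‑by‑prime lattice‑point count. The hypothesis $p>2$ is indispensable there: for $p=2$ the lattice $V$ is essentially $\F_q[x^2]$, far too thin for such a bound, and indeed ($\bigstar$) then fails with positive probability.
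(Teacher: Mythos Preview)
Your difficulty stems from a misreading of ($\bigstar$), which in turn traces to a typo in its displayed definition: the $b_i$ are meant to be the $t$-coefficients of $f_0$, not of $f_0'$. This is how the condition is actually used in the proof of Lemma~\ref{lem: gamma}(i)--(ii), where one writes $f_0=\sum_i b_i t^i$, $f_0'=\sum_i b_i' t^i$ and invokes ($\bigstar$) to conclude $b_1,b_2\ne 0$ and $b_1/b_2\notin\F_q(x^p)$; it is also the reading under which the paper's own proof of the present lemma makes sense (it opens with ``Write $f_0=\sum_j b_j t^j$; then $b_1,b_2$ are independent and uniform in $\F_q[x]_{\le n-1}$'').

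With the intended reading, $b_1,b_2$ are independent and uniform over the \emph{full} space $\F_q[x]_{\le n-1}$, not over the thin lattice $V$ of derivatives. The proof is then a direct count: if ($\bigstar$) fails and $b_2\ne 0$, write $b_1=bu$, $b_2=bv$ with $b$ monic and $u\in\F_q[x^p]$; for each $\deg b=k$ there are $q^k$ choices of $b$, at most $q^{\lfloor(n-1-k)/p\rfloor+1}$ choices of $u$ and $q^{n-k}$ choices of $v$, so
\[
\P\bigl((\bigstar)\text{ fails}\bigr)\ll q^{-n}+q^{-2n}\sum_{k\ge 0}q^k\cdot q^{(n-k)/p}\cdot q^{n-k}\ll q^{-n(1-1/p)}\le q^{-n/2},
\]
the last inequality using $p\ge 3$. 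No sieve, no equidistribution in $V$, and no loss at $p=3$.

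Your entire apparatus --- the $p$-primitive radical $R(b_1)$, the lattice bound for $|\{b_2\in V:R\mid b_2\}|$, the Euler product --- addresses a genuinely harder problem created by the misreading. The two symptoms you yourself flagged (the unfinished ``equidistribution step'' and the weaker $O(q^{-n/3})$ for $p=3$, which does \emph{not} prove the lemma as stated) are both artifacts of working in $V$ rather than in $\F_q[x]_{\le n-1}$.
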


\begin{proof} If $d=1$ Condition ($\bigstar$) is vacuous, so we assume $d>1$.
Write $f_0=\sum_{j=0}^db_j(x)t^j$. Then $b_1,b_{2}$ are independent and uniform in $\F_q[x]_{\le n-1}$. If ($\bigstar$) fails then we may write $b_1=bu,\,b_2=bv$ for some $u\in\F_q[x^p],v\in\F_q[x]$ and $b\in\F_q[x]$ monic. For a given $b$ with $\deg b=k$, there are at most $q^{\lfloor\frac{n-k-1}p\rfloor+1}$ values of $u$ and at most $q^{n-k}$ possible values $v$ such that $bu,bv\in\F_q[x]_{\le n-1}$. Given $\deg b=k$ there are $q^k-1$ possible values of $b$. Hence (using  $\P(b_{2}=0)=q^{-n}$)
\begin{multline*} 
\P((\bigstar)\mbox{ fails})=\frac 1{q^n}+
 \frac 1{q^{2n}}
 \left|\left\{(b_1,b_{2})\in\F_q[x]_{\le n-1}\times(\F_q[x]_{\le n-1}\setminus\{0\}):\frac{b_1}{\gcd(b_1,b_{2})}\in\F_q[x^p]\right\}\right|\\
 \le
 q^{-n}+q^{-2n}\sum_{k=0}^nq^kq^{\lfloor\frac{n-k-1}p\rfloor+1}q^{n-k}\ll q^{-n}\sum_{k=0}^nq^{\frac{n-k}p}\ll q^{-n}q^{n/p}\ll q^{-n/2}.
\end{multline*}
The assertion follows.

\end{proof}

\begin{lem}\label{lem: lattice} Assume $q$ is odd. Let $f_0\in\mathcal F$ be uniformly random, where $\mathcal F$ is given by (\ref{eq: def F}). Then
$$\P(\deg\con_t(f_0')\ge k)=O\left(q^{-\min(k,n/2)}\right).$$
\end{lem}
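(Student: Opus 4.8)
The plan is to unwind $\con_t(f_0')$ in terms of the random coefficients and reduce to a divisibility count inside a fixed subspace. Writing $f_0'=\sum_{j=0}^{d}b_j(x)t^j$, each coefficient is $b_j=g_j'$, where $g_0=x^n+\sum_{i=0}^{n-1}a_{i,0}x^i$ is a uniformly random monic polynomial of degree $n$ and, for $j\ge 1$, $g_j=\sum_{i=0}^{n-1}a_{i,j}x^i$ is uniform of degree $\le n-1$; the $g_j$, hence the $b_j$, are independent. Thus $b_j$ is uniformly distributed in the subspace $V:=\{h':h\in\F_q[x]_{\le n-1}\}=\operatorname{span}_{\F_q}\{x^m:0\le m\le n-2,\ m\not\equiv-1\pmod p\}$ for $j\ge 1$, while $b_0$ is uniform in a fixed coset $v_0+V$. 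Since $\con_t(f_0')=\gcd(b_0,\ldots,b_d)$ in $\F_q[x]$, and any $d+1$ polynomials, not all zero, whose gcd has degree $\ge k$ share a common monic divisor of degree exactly $k$,
$$\P\bigl(\deg\con_t(f_0')\ge k\bigr)\le\P(b_0=\cdots=b_d=0)+\sum_{c\in\F_q[x]^{\mon}_{k}}\P(c\mid b_j\ \forall j).$$
The first term is $\le q^{-\dim V}$ with $\dim V=(n-1)-\lfloor(n-1)/p\rfloor\ge\tfrac23(n-1)$, hence $O(q^{-n/2})$. For each $c$, independence gives $\P(c\mid b_j\ \forall j)=\prod_j\P(c\mid b_j)\le q^{-(d+1)r(c)}$, where $r(c):=\dim_{\F_q}\bigl(\text{image of }V\text{ in }\F_q[x]/(c)\bigr)$ (one has $\P(c\mid b_j)=q^{-r(c)}$ for $j\ge1$ and $\P(c\mid b_0)\le q^{-r(c)}$).

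The heart of the matter is the identity $r(c)=\deg c-\deg e_c$ (valid for $\deg c\le n/2$ and $n$ large), where $e_c$ is the largest monic polynomial with $e_c^{\,p}\mid c$. I would argue as follows. Since $D:=\operatorname{im}(d/dx)=\{h\in\F_q[x]:[x^m]h=0\text{ whenever }m\equiv-1\pmod p\}$ and an element of $D$ can be reduced modulo $c$ without leaving $D$ once $\deg c\le n/2$, the image of $V$ in $R:=\F_q[x]/(c)$ equals the image of $D$, so $r(c)=k-\dim\bigl(\F_q[x]/(D+(c))\bigr)$. Now $\F_q[x]=D\oplus x^{p-1}\F_q[x^p]$, so $\F_q[x]/(D+(c))\cong x^{p-1}\F_q[x^p]\big/\bigl(x^{p-1}\F_q[x^p]\cap(D+(c))\bigr)$; writing $c=\sum_{s=0}^{p-1}x^s c_s(x^p)$ in base $x^p$ and letting $\pi$ be the projection $\F_q[x]\to x^{p-1}\F_q[x^p]$ killing $D$, a direct computation gives $x^{p-1}\F_q[x^p]\cap(D+(c))=\pi\bigl((c)\bigr)=x^{p-1}\gamma(x^p)\F_q[x^p]$ with $\gamma=\gcd(c_0,\ldots,c_{p-1})$, so the cokernel has dimension $\deg\gamma$. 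Finally $h(x^p)\mid c$ in $\F_q[x]$ iff $h\mid\gamma$, so $\gamma(x^p)$ is the largest element of $\F_q[x^p]$ dividing $c$; as $\F_q$ is perfect, the polynomials in $x^p$ are exactly the perfect $p$-th powers, so this largest element is $e_c^{\,p}$, whence $\deg\gamma=\deg e_c$ and $r(c)=k-\deg e_c$.

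Granting this, the sum closes by a routine count. Splitting by the value of $e_c$ and using $\#\{c\in\F_q[x]^{\mon}_k:E^p\mid c\}=q^{k-p\deg E}$,
$$\sum_{c\in\F_q[x]^{\mon}_{k}}q^{-(d+1)r(c)}= q^{-(d+1)k}\sum_{c\in\F_q[x]^{\mon}_{k}}q^{(d+1)\deg e_c}\le q^{-(d+1)k}\sum_{\substack{E\in\F_q[x]^{\mon}\\ p\deg E\le k}}q^{(d+1)\deg E}q^{k-p\deg E}=q^{-dk}\sum_{m=0}^{\lfloor k/p\rfloor}q^{(d+2-p)m},$$
and since $q$ is odd ($p\ge 3$) this geometric sum is easily seen to make the whole expression $O_{q,d}(q^{-k})$ for $k\le n/2$ (for $k>n/2$ one reduces to this via $\{\deg\con_t(f_0')\ge k\}\subseteq\{\deg\con_t(f_0')\ge\lfloor n/2\rfloor\}$); together with the $O(q^{-n/2})$ from the all-zero term this yields the claim. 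The main obstacle is the identity $r(c)=\deg c-\deg e_c$: $V$ is the space of \emph{derivatives} of bounded degree, not all of $\F_q[x]$, and its reduction modulo $c$ drops rank precisely when $c$ has a large $p$-th power factor; pinning down this drop exactly (via the base-$x^p$ expansion and the identification of $p$-th powers with polynomials in $x^p$), and checking that the degree truncation costs nothing when $k\le n/2$, is where the work lies—everything after that is elementary bookkeeping.
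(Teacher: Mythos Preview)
Your approach differs from the paper's: the paper quotes a lattice estimate from \cite[Lemma~6.2]{EnPi23} to bound $|\{b\in\F_q[x]_{\le n}:c\mid b'\}|$ directly and then sums over all monic $c$ of degree $\ge k$, whereas you attempt to compute the rank $r(c)$ of the reduction map $V\to\F_q[x]/(c)$ by hand. Your calculation of $\dim\mathrm{im}(D)=k-\deg e_c$ via the base-$x^p$ expansion and the projection $\pi$ is correct and elegant.

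The real gap is the passage from $D$ to its truncation $V=D\cap\F_q[x]_{\le n-2}$. The sentence ``an element of $D$ can be reduced modulo $c$ without leaving $D$ once $\deg c\le n/2$'' is not a proof, and on its literal reading is false: for $p=3$ and $c=x^3-x-1$, the canonical remainder of $x^4\in D$ modulo $c$ is $x^2+x\notin D$. What you actually need is that every class in $\mathrm{im}(D)\subset\F_q[x]/(c)$ admits \emph{some} representative lying in $D\cap\F_q[x]_{\le n-2}$; equivalently, that the $\F_q[x^p]$-lattice $D\cap(c)\subset D\cong\F_q[x^p]^{\,p-1}$ has a basis with all degrees $\le n-2$. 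The cheap reduction by multiples of $c^p\in\F_q[x^p]$ only brings degrees below $p\deg c$, so it yields $r(c)=k-\deg e_c$ merely for $k\le(n-1)/p$, not for $k\le n/2$. Bridging the interval $(n-1)/p<k\le n/2$ is exactly the lattice-theoretic input the paper imports from \cite{EnPi23}; you have not supplied it, and you yourself flag this step as ``where the work lies.''

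There is a second, smaller error. When $d+2-p=0$ your ``geometric'' sum has ratio $1$ and equals $\lfloor k/p\rfloor+1$, so the bound becomes $\Theta(kq^{-dk})$. For $(d,p)=(1,3)$ this is $\Theta(kq^{-k})$, which is not $O_{q,d}(q^{-k})$; thus even granting the rank identity, your union bound over $c$ of degree exactly $k$ does not deliver the stated $O(q^{-\min(k,n/2)})$ in this case.
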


\begin{proof} The main tool for the proof of the lemma is \cite[Lemma 6.2]{EnPi23}, which gives the estimate 
\begin{equation}\label{eq: lattice}\left|\{h\in\F_q[x]_{\le n}:\,Q|h'\}\right|\ll
\left[\begin{array}{ll} q^{n-\deg Q}& 0\le \deg Q\le \frac n2,\\ q^{n-\deg Q +\frac 1p(2\deg Q-n)},& \frac n2<\deg Q<n.\end{array}\right. \end{equation}
for any $Q\in\F_q[x]_{\le n-1}$, where $p=\mathrm{char}(\F_q)>2$. This estimate is derived using the theory of lattices over $\F_q[x]$.

For $f_0\in\mathcal F$ write $f_0=\sum_{j=0}^db_j(x)t^j,\,b_j\in\F_q[x]_{\le n}$. Then $f_0'=\sum_{j=0}^db_j't^j$ and $\con_t(f_0')|b_j'$ for $0\le j\le d$. Therefore for $k<n$ (which we assume WLOG because $\deg\con_t(f_0')<n$) we have (using (\ref{eq: lattice})) 
\begin{multline*}\left|\{f_0\in\mathcal F:\,\deg\con_t(f_0)\ge k\}\right|\le \sum_{l=k}^{n-1}\sum_{c\in\F_q[x]_l^\mon}\left|\{b\in\F_q[x]_{\le n}:\,c|b'\}\right|^{d+1} 
\\ \le \sum_{l=k}^{\lfloor n/2\rfloor}q^lq^{(n-l)(d+1)}+\sum_{l=\lfloor n/2\rfloor+1}^{n-1}q^lq^{\left(n-l+\frac{2l-n}p\right)(d+1)}
\ll q^{n(d+1)-k}+q^{n(d+1)-n/2}\ll q^{n(d+1)}q^{-\min(k,n/2)}.
\end{multline*}
Dividing by $|\mathcal F|=q^{n(d+1)}$ we obtain the assertion of the lemma.

\end{proof}

\begin{proof}[Proof of Proposition \ref{prop: derivative}]
Set $p=\mathrm{char}(\F_q)>2$ and denote $$\mathcal D=\left\{h':\,h\in\F_q[x]\right\}=\left\{\sum_{i\ge 0} e_ix^i:a_i\in\F_q,\,a_i=0\mbox{ if }i\equiv -1\pmod p\right\}.$$
Let $f_0\in\mathcal F$ be a polynomial 
(with $\mathcal F$ defined by (\ref{eq: def F})). Then $f_0$ can be written uniquely as 
\begin{equation}\label{eq: f in t}f_0=\sum_{j=0}^db_j(x)t^j,\quad b_0\in\F_q[x]_n^\mon,\,b_j\in\F_q[x]_{\le n-1}\,(1\le j\le d).\end{equation}  
We have $$f_0'=\sum_{j=1}^db_j'(x)t^j,\quad b_j'\in\mathcal A_i,$$ where 
$$\mathcal A_0=\left\{nx^{n-1}+h:\,h\in\F_q[x]_{\le n-2}\right\}\cap\mathcal D,\,\quad\mathcal A_i=\F_q[x]_{\le n-2}\cap\mathcal D.$$ It is easy to see that the map
$$f\mapsto (b_0',\ldots,b_d'):\,\mathcal F\to\mathcal A_0\times\cdots\times\mathcal A_d$$ is $q^{(d+1)(\lfloor \frac{n-1}p\rfloor+1)}$-to-1. 

Let $Q\in\F_q[t]$ be any irreducible quadratic polynomial. We will now apply Proposition \ref{prop: hilbert 2} to the polynomial $$F=(y_dt^d+y_{d-1}t^{d-1}+\ldots+y_0)\cdot Q(t),$$ which has Galois group $G=\Gal(F/\F_q(x,y_0,\ldots,y_d))=S_d\times C_2$. If $\bs\eta\in\mathcal H_F$ then $F(\bs\eta,t)$ is separable of degree $d$ in $t$ and $\Gal(F(\bs\eta,t)/\F_q(x))=S_d\times C_2$, so by Lemma \ref{lem: disjoint} we can write $F(\bs\eta,t)=c(t)g(x,t)$ with $g$ absolutely irreducible and separable, $\Gal(g/\F_q(x))=S_d$. Therefore if $f_0\in\mathcal F$ is uniformly random and $n$ is large enough, by Proposition \ref{prop: hilbert 2}  (note that since $p>2$ we have $|\mathcal A_i|\gg q^{2n/3}$ and the condition $|\mathcal A_i|\ge nq^{n/2}$ of Proposition \ref{prop: hilbert 2} is satisfied for large enough $n$) we have
\begin{multline}\label{eq: prob abs irred}\P(f_0'\neq 0,\,g=f_0'/\con_t(f_0')\mbox{ is absolutely irreducible and separable in }t,\,\Gal(g/\F_q(x))=S_d)\\ \ge \frac{|\mathcal H_F\cap(\mathcal A_0\times\cdots\times\mathcal A_d)|}{|A_0\times\cdots\times\mathcal A_d|}=1+O\left(\max_{0\le i\le d}\frac{nq^{n/2}}{|\mathcal A_i|}\right)=1+O(nq^{-n/6}),\end{multline}
where in the last equality we used once again that $p>2$ and therefore $|\mathcal A_i|\gg q^{2n/3}$.
By Lemma \ref{lem: star} we also have \begin{equation}\label{eq: prob der is morse}\P\left(f_0\mbox{ satisfies }(\bigstar)\right)=1+O(q^{-n/2}),\end{equation} and by Lemma \ref{lem: lattice} we have
\begin{equation}\label{eq: prob deg c}\P\left(\deg\con_t(f')\ge n^{1/2}\right)=O\left(q^{-\lfloor n^{1/2}\rfloor}\right)=O\left(q^{-n^{1/3}}\right).\end{equation}
Combining (\ref{eq: prob abs irred}), (\ref{eq: prob deg c}), (\ref{eq: prob der is morse}) we obtain the assertion of Proposition \ref{prop: derivative}.

\end{proof}

\section{Discriminants in $\mathcal F_{f_0}$: preliminaries}
\label{sec: disc}

In the remainder of the paper we prove Proposition \ref{prop: disc}. In the present section we develop the necessary preliminaries before carrying out the main sieve argument in \S\ref{sec: sieve}. The proof will be concluded in \S\ref{sec: conclusion}.

Throughout this section $q$ is a fixed power of a prime $p>2$, $d$ a fixed natural number and $n$ is a parameter eventually taken to infinity. All asymptotic notation has implicit constants or rate of convergence which may depend on $q,d$.

We are assuming the reader's familiarity with the basic theory of univariate function fields. Good sources on the subject are \cite{Ros02} and \cite{Sti09}. Henceforth by a function field we will always mean a univariate function field with field of constants $\F_q$. For a function field $M$ we denote by $\mathcal P_M$ its set of prime divisors. If $M/N$ is a finite extension of function fields and $\mathfrak P\in\mathcal P_M$ lies over $\mathfrak p\in\mathcal P_N$, we will denote this by $\mathfrak P|\mathfrak p$. We will also denote the relative degree of $\mathfrak P$ in $M/N$ by $\deg_N\mathfrak P=\frac{\deg\mathfrak P}{\deg\mathfrak p}$. For $\mathfrak p\in\mathcal P_M, f\in M$ we denote by $v_{\mathfrak p}(f)$ the corresponding valuation. The height $\mathrm{ht}_M(f)$ is the degree of the polar divisor of $f$.

We now recall the setup of Proposition \ref{prop: disc}. Let \begin{equation}\label{eq: f0}f_0=x^n+\sum_{i=0}^{n-1}a_i(t)x^i,\quad a_i\in\F_q[t]_{\le d}\end{equation} be such that $f_0'=c(x)g(t,x)\neq 0$ with $g$ absolutely irreducible and separable in $t$ and $\Gal(g/\F_q(x))=S_d$. We are also assuming that $f_0$ satisfies Condition ($\bigstar$) of \S\ref{sec: outline thm2}: 
\begin{equation}\label{eq: star}f=\sum_{i=0}^db_i(x)t^i,\quad d=1\quad\mbox{or}\quad b_{1}\neq 0\mbox{ and }\frac{b_1}{(b_1,b_{2})}\not\in\F_q[x^p].\end{equation}
We will see below (Lemma \ref{lem: gamma}(i)) that this assumption guarantees that $\F_q(t,\alpha)=\F_q(t,f_0(t,\alpha))$ and that $f_0(t,\alpha)$ is not a $p$-th power in $\F_q(t,\alpha)$, where $\alpha$ is a root of $g\in\F_q(t)[x]$ in $\overline{\F_q(t)}$. This will be important in the sequel.

Our goal is to show that for a uniformly random $h\in\F_q[x]_{\le (n-1)/p}$, if we make the additional assumption $\deg c<n^{1/2}$ then \begin{equation}\label{eq: key estimate}\P\left(\exists P\in\F_q[t]\mbox{ prime}:\,n^{2/3}\le\deg P\le \frac n{3p\log^2n},\,P\Vert\disc(f_0(t,x)+h(x)^p)\right)\\=1+O\left(\frac{\log^{2}n}{n^{1/3}}\right).\end{equation}
From now on $f_0$ will be of the form (\ref{eq: f0}) with $f_0'=c(x)g(t,x)$, $g$ absolutely irreducible and separable in $t$ with $\Gal(g/\F_q(x))=S_d$ and we also assume (\ref{eq: star}). We do not yet require $\deg c<n^{1/2}$, this condition will come into play in \S\ref{sec: conclusion}.
In what follows $h$ will always be a random polynomial drawn uniformly from  $\F_q[x]_{\le (n-1)/p}$.

Let $\alpha\in\overline{\F_q(t)}$ be a root of $g$, $$L=\F_q(t,\alpha),\quad F=\F_q(t),\quad K=\F_q(\alpha).$$ Note that since $g$ is absolutely irreducible, the field of constants of $L$ is $\F_q$. Let $\mathcal O$ be the integral closure of $\F_q[t,\alpha]$ in $L$. We will identify prime divisors $P$ of $F=\F_q(t)$ satisfying $v_P(t)\ge 0$ with (monic) prime polynomials in $\F_q[t]$ in the usual way, and do the same for $K=\F_q(\alpha)$. We also identify the prime divisors $\mathfrak p\in\mathcal P_L$ satisfying $v_\mathfrak p(\alpha),v_\mathfrak p(t)\ge 0$ with the nonzero prime ideals of $\mathcal O$ via $\mathfrak p\mapsto\{f\in\mathcal O:\,v_\mathfrak p(f)\ge 1\}$.

Let $H\in K[T]=\F_q(\alpha)[T]$ be the monic minimal polynomial of $f_0(t,\alpha)$ over $K$ and denote $$D=\disc(H)=\res_T\left(H,\frac{\partial H}{\partial T}\right)\in K,\quad R=\res_T\left(H,\frac{\partial H}{\partial \alpha}\right)\in K$$
(resultant in the variable $T$). Since $g(T,\alpha)\in K[T]$ is a minimal polynomial of $t$ over $K$ and is irreducible and separable in $t$, the extension $L/K$ is separable and therefore $H$ is separable and $D\neq 0$.

\begin{lem}\label{lem: gamma}
\begin{enumerate}\item[(i)] $L=K(f_0(t,\alpha))$.
\item[(ii)] $f_0(t,\alpha)\not\in L^p=\{a^p:\,a\in L\}.$
\item[(iii)] $R\neq 0$.
\item[(iv)] Let $Q\in\F_q[\alpha]$ be a prime and denote $A=\F_q[\alpha]_Q$ (localization at $Q$). Assume that $H,\frac{\partial H}{\partial\alpha}\in A[T]$ and $v_Q(D),v_Q(R)\in A^{\times}$. Let
$\mathfrak q\in\mathcal P_L,\,\mathfrak q| Q$
and assume that 
$v_{\mathfrak q}(f_0(t,\alpha)+u^p)\ge 1$ 
for some $u\in\F_q[\alpha]$. Then $\deg_K\mathfrak q=1$ and $v_{\mathfrak q}(f_0(t,\alpha)+u^p)=1$.\end{enumerate}\end{lem}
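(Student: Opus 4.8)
Parts (i)--(iii) I would extract from Condition ($\bigstar$) by direct computation and regard as preparatory: (i) by checking that the $K$-conjugates $f_0(t^{(i)},\alpha)$ of $\theta:=f_0(t,\alpha)$ are pairwise distinct (a non-vanishing statement about the $t$-coefficients of $f_0$ that ($\bigstar$), applied to $f_0'=cg$, is designed to ensure when $d>1$; trivial when $d=1$); (ii) by noting $d\theta=\partial_x f_0(t,\alpha)\,d\alpha+\partial_t f_0(t,\alpha)\,dt$ where $\partial_x f_0(t,\alpha)=f_0'(t,\alpha)=c(\alpha)g(t,\alpha)=0$ (as $\alpha$ is a root of $g$ in $x$), so $d\theta=\partial_t f_0(t,\alpha)\,dt$ is nonzero by ($\bigstar$), whence $\theta\notin L^p$; (iii) because $R=0$ would force $\partial_\alpha H\equiv 0$, contradicting (i) together with ($\bigstar$). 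The substance is (iv), on which I focus. The plan is to reduce everything to the semilocal picture at $Q$: writing $k:=A/Q=\F_q[\alpha]/Q$ and using part (i) ($L=K(\theta)$, with $\theta$ of monic minimal polynomial $H\in A[T]$), the hypothesis $D=\disc(H)\in A^\times$ (I read the stated condition $v_Q(D),v_Q(R)\in A^\times$ as $v_Q(D)=v_Q(R)=0$) makes $A[\theta]=A[T]/(H)$ étale over the DVR $A$, hence normal, hence the integral closure of $A$ in $L$; thus the primes $\mathfrak q'\mid Q$ correspond bijectively to the irreducible factors of $\bar H:=(H\bmod Q)\in k[T]$, each factor is simple (as $\bar D\neq 0$), each $\mathfrak q'$ is unramified over $Q$, and $\deg_K\mathfrak q'=f(\mathfrak q'\mid Q)$ equals the degree of the associated factor.

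For $\deg_K\mathfrak q=1$: reducing $v_{\mathfrak q}(\theta+u^p)\ge 1$ modulo $\mathfrak q$ gives $\bar\theta=-\bar u^p$ in $\kappa(\mathfrak q)$, and since $u\in\F_q[\alpha]\subseteq A$ its residue $\bar u$ lies in $k$, so $\bar\theta\in k$. As $\kappa(\mathfrak q)=k[\bar\theta]$ (because $L=K(\theta)$), we get $\kappa(\mathfrak q)=k$, i.e. $\deg_K\mathfrak q=f(\mathfrak q\mid Q)=1$; equivalently the factor of $\bar H$ attached to $\mathfrak q$ is a simple linear factor.

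For $v_{\mathfrak q}(\theta+u^p)=1$ I would argue with orders of vanishing of differentials of $L/\F_q$. Since $L/\F_q(\alpha)=L/K$ is separable, $d\alpha$ generates $\Omega_{L/\F_q}$ over $L$, and since $L/K$ is unramified at $\mathfrak q$ (as $D\in A^\times$), $v_{\mathfrak q}(d\alpha)=0$. Differentiating $H(\alpha,\theta)=0$ gives $d\theta=-\bigl(\partial_\alpha H(\alpha,\theta)/\partial_T H(\alpha,\theta)\bigr)\,d\alpha$, where $v_{\mathfrak q}\bigl(\partial_T H(\alpha,\theta)\bigr)=0$ because $\bar\theta$ is a simple root of $\bar H$, and $v_{\mathfrak q}\bigl(\partial_\alpha H(\alpha,\theta)\bigr)=0$ because — using part (i) — $R=\res_T(H,\partial_\alpha H)=N_{L/K}\bigl(\partial_\alpha H(\alpha,\theta)\bigr)$, so $\sum_{\mathfrak q'\mid Q}f(\mathfrak q'\mid Q)\,v_{\mathfrak q'}\bigl(\partial_\alpha H(\alpha,\theta)\bigr)=v_Q(R)=0$ with all summands nonnegative ($\partial_\alpha H(\alpha,\theta)$ being integral over $A$). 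Hence $v_{\mathfrak q}(d\theta)=0$. The decisive characteristic-$p$ fact is $d(u^p)=p\,u^{p-1}\,du=0$, so $d(\theta+u^p)=d\theta$ has $v_{\mathfrak q}=0$. Now write $e:=v_{\mathfrak q}(\theta+u^p)\ge 1$ and $\theta+u^p=\pi^e w$ with $w\in\mathcal O_{\mathfrak q}^\times$ and $\pi$ a uniformizer with $v_{\mathfrak q}(d\pi)=0$ (e.g. $\pi=Q$, legitimate since $e(\mathfrak q\mid Q)=1$): then $d(\theta+u^p)=e\pi^{e-1}w\,d\pi+\pi^e\,dw$ with $v_{\mathfrak q}(dw)\ge 0$, which has order exactly $e-1$ if $p\nmid e$ and order $\ge e>0$ if $p\mid e$; since this order is $0$, necessarily $e=1$.

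The identification of $A[\theta]$ with the local integral closure, and the first assertion, are routine. I expect the main obstacle to be the differential computation for the second assertion: in particular verifying $v_{\mathfrak q}(d\alpha)=0$ (exactly the unramifiedness bought by $D\in A^\times$), converting the hypothesis $R\in A^\times$ into $v_{\mathfrak q}\bigl(\partial_\alpha H(\alpha,\theta)\bigr)=0$ through the norm identity (which also tacitly needs $R\neq 0$, part (iii), and $L=K(\theta)$, part (i)), and handling the $p\mid e$ case so that the characteristic-$p$ vanishing $d(u^p)=0$ can be turned into the sharp conclusion $e=1$.
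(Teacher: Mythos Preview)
Your treatment of (iv) is correct and takes a genuinely different route from the paper's. Both begin by identifying $A[\theta]$ with the integral closure of $A$ in $L$ via $D\in A^\times$, and deduce $\deg_K\mathfrak q=1$ from $\bar\theta=-\bar u^p\in k$ exactly as you do. For $v_{\mathfrak q}(\theta+u^p)=1$ the paper is more elementary: assuming $\mathfrak q^2\mid \theta+u^p$, it takes norms to get $Q^2\mid H(-u^p)$, then applies $\partial/\partial\alpha$ to the identity $H(-u^p)=Q^2W$ in $\F_q[\alpha]$---using $\partial_\alpha(u^p)=0$---to obtain $Q\mid(\partial_\alpha H)(-u^p)$; thus $-u^p$ is a common zero of $\bar H$ and $\overline{\partial_\alpha H}$ modulo $Q$, forcing $Q\mid R$. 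Your differential computation is the same mechanism viewed intrinsically on $L$; the norm identity $R=N_{L/K}\bigl(\partial_\alpha H(\alpha,\theta)\bigr)$ you invoke is precisely what links the two pictures. Your version is conceptually cleaner; the paper's avoids the bookkeeping about $v_{\mathfrak q}(d\alpha)$ and the choice of uniformizer.

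For (i) the paper does \emph{not} attempt to show all $K$-conjugates of $\theta$ are pairwise distinct. Instead it uses the hypothesis $\Gal(g/\F_q(x))=S_d$: since $S_d$ acts primitively on the roots of $g(T,\alpha)$, there is no field strictly between $K$ and $L$, so it suffices to show $\theta\notin K$, and only this weaker statement is extracted from $(\bigstar)$ (via $f_0+v\,f_0'=w\Rightarrow (B_1/B_2)'=0$). Your plan to get pairwise distinctness directly from $(\bigstar)$ looks harder to execute and is more than is needed.

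Your sketch for (ii) can be completed, but ``nonzero by $(\bigstar)$'' hides real work: one needs both $\partial_t f_0(t,\alpha)\neq 0$ and $dt\neq 0$. The first follows because $(\bigstar)$ gives $B_1'\neq 0$ (the $t$-coefficient of $f_0'$), hence $B_1\neq 0$, hence $B_1(\alpha)\neq 0$ since $\alpha$ is transcendental over $\F_q$; the second because $(\bigstar)$ forces $g\notin\F_q[t,x^p]$ (else $b_1/(b_1,b_2)=g_1/(g_1,g_2)\in\F_q[x^p]$), so $\partial_x g\neq 0$ and hence $\partial_x g(t,\alpha)\neq 0$ by irreducibility of $g$, giving $dt=-\tfrac{\partial_x g}{\partial_t g}\,d\alpha\neq 0$. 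The paper instead argues (ii) by writing a putative $p$-th-power expression $f_0(t,\alpha)=w(t^p,\alpha^p)/z(\alpha^p)$ and, parallel to (i), deducing $(B_1/B_2)'=0$.
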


\begin{proof} Denote $\gamma=f_0(t,\alpha)$.

{\bf (i).} As noted above the extension $L/K$ is separable. We have $L=K(t)$, a (not necessarily monic) minimal polynomial of $t$ over $K$ is $U(T)=g(T,\alpha)\in K[T]$ and hence $\Gal(U/K)\cong\Gal(g/\F_q(x))=S_d$ (the latter equality was another assumption we made on $g$). The group $\Gal(U/K)=S_d$ acts primitively on the $d$ roots of $U$ and therefore there are no intermediate fields between $K$ and $L$. Thus either $L=K(\gamma)$ and we are done, or $\gamma\in K$. If $d=1$ we have $L=K$ and the assertion is trivial. It remains to show that if $d>1$ then $\gamma\not\in K$.

Assume by way of contradiction that $d>1$ and $\gamma\in K$. We will show that this contradicts the assumption (\ref{eq: star}). Write $\gamma=w(\alpha)$ for some $w\in\F_q(x)$. Since $g(T,\alpha)$ and equivalently $f_0'(T,\alpha)=c(\alpha)g(T,\alpha)\in K[T]$, is a minimal polynomial of $t$ over $K$, we have
$f_0(T,\alpha)+v(\alpha)f_0'(T,\alpha)=w(\alpha)$ for some $v\in\F_q(x)$ (here we used $\deg_tf_0=\deg_tf_0'=\deg_tg=d$, so the coefficient $v(\alpha)$ does not depend on $T$), equivalently
$$f_0(t,x)+v(x)f_0'(t,x)=w(x),\quad v,w\in\F_q(x).$$
Plugging in $f_0=\sum_{i=0}^db_it^i,\,f_0'=\sum_{i=0}^db_i't^i$  and recalling $d>1$, we obtain $b_1=-vb_1',\,b_2=-vb_2'$. Since by (\ref{eq: star}) we have $b_1,b_2\neq 0$, it follows that
$$\left(\frac{b_1}{b_2}\right)'=\frac{b_1'b_2-b_1b_2'}{b_2^2}=0,$$
which implies $\frac{b_1}{b_2}\in\F_q(x^p)$. But this contradicts (\ref{eq: star}).

{\bf (ii)} Assume by way of contradiction that $f_0(t,\alpha)\in L^p$. Then we may write $f_0(t,\alpha)=\frac{w(t^p,\alpha^p)}{z(\alpha^p)}$ for some $w\in\F_q[t,x],0\neq z\in\F_q(x)$ (because $K[t,\alpha]=L$). Arguing as in part (i) we may write
$$z(x^p)f_0(t,x)+z(x^p)v(x)f_0'(t,x)=w(t^p,x^p),\quad u,v\in\F_q(x).$$ Comparing coefficients at $t,t^2$ and using $p>2$ we once again deduce $\left(\frac{b_1}{b_2}\right)'=0$, contradicting (\ref{eq: star}).

{\bf (iii)} Since $H\in K[T]$ is monic and irreducible, we have $\deg_T\frac{\partial H}{\partial\alpha}<\deg_T H$ ($\frac{\partial}{\partial\alpha}$ kills the leading term) and therefore $R=\res_T\left(H,\frac{\partial H}{\partial\alpha}\right)\neq 0$ iff  $\frac{\partial H}{\partial\alpha}\neq 0$. Assume by way of contradiction that $\frac{\partial H}{\partial\alpha}=0$, so each coefficient of $H$ is a $p$-th power. By (i) we have $\deg H=d$ and we may write $H=\sum_{i=1}^da_i^pT^i,\,a_i\in K$. Then $\gamma^{1/p}$ is a root of $\sum_{i=0}^da_iT^i$ and therefore $[K(\gamma^{1/p}):K]\le d=[K(\gamma):K]$ and $\gamma^{1/p}\in K(\gamma)=L$, contradicting (ii).


{\bf (iv).} Since $H\in A[T]$ is the (monic) minimal polynomial of $\gamma $ over $K$, the element $\gamma$ is integral over $A$. Since by assumption $D=\disc(H)\in A^\times$ and $L=K(\gamma)$ it follows (e.g. by \cite[Corollary III.6.2]{Ser79}) that $A[\gamma]$ is the integral closure of $A$ in $L$.
Consequently $A[\gamma]=(A\smallsetminus QA)^{-1}\mathcal O$ and the inclusion $\mathcal O\subset A[\gamma]$ induces an isomorphism $\mathcal O/\mathfrak q\isom A[\gamma]/\mathfrak qA[\gamma]$ (since $\mathfrak q\cap A=QA$ and $\mathcal O/\mathfrak q$ is a field).

The assumption $v_{\mathfrak q}(\gamma+u^p)\ge 1,\,u\in\F_q[\alpha]$ implies $\gamma\equiv -u^p\pmod{\mathfrak q}$ for some $u\in\F_q[\alpha]\subset A$ and hence $A[\gamma]=A+\mathfrak qA[\gamma]$ and $\mathcal O/\mathfrak q\cong A[\gamma]/\mathfrak qA[\gamma]\cong A/Q\cong\F_q[\alpha]/Q$ (using $QA=A\cap\mathfrak qA[\gamma]$ for the second isomorphism). Therefore $\deg\mathfrak q=\log_q|\mathcal O/\mathfrak q|=\log_q|\F_q[\alpha]/Q|=\deg Q$ and $\deg_K\mathfrak q=1$. Equivalently, $N_{L/K}(\mathfrak q)=Q$.

It remains to show $v_{\mathfrak q}(\gamma+u^p)=1$. Assume by way of contradiction that $v_{\mathfrak q}(\gamma+u^p)>1$, equivalently $\mathfrak q^2\,|\,\gamma+u^p$. Taking norms we obtain $Q^2\,|\,H(-u^p)$. Write $H(-u^p)=Q^2W,\,W\in\F_q[\alpha]$ and take the derivative with respect to $\alpha$:
$$Q\left(2\frac{\partial Q}{\partial\alpha}W+Q\frac{\partial W}{\partial\alpha}\right)=\frac{\partial H}{\partial\alpha}(-u^p).$$
We see that $-u^p$ is a common root of $H,\frac{\partial H}{\partial\alpha}$ modulo $Q$ in $A$, hence $R=\res_T(H,\frac{\partial H}{\partial\alpha})\in QA$, contradicting the assumption $R\in A^\times$.

\end{proof}

Next we define a set of ``good" primes of $L$ that will be used to carry out the sieve in \S\ref{sec: sieve}.

\begin{defi} Let $\mathcal S\subset\mathcal P_L$ be the set of prime divisors $\mathfrak p\in\mathcal P_L$ satisfying the following two conditions:
\begin{enumerate}
    \item[(a)] $\deg_F\mathfrak p=\deg_K\mathfrak p=1$. 
    \item[(b)]
    Any $\mathfrak q\in\mathcal P_L$ lying over the same prime divisor $P\in\mathcal P_F$ as $\mathfrak p$ satisfies $v_{\mathfrak q}(\alpha),v_{\mathfrak q}(t)\ge 0$ and if $Q\in\F_q[\alpha]$ is the prime of $K$ lying under $\mathfrak q$ then $H,\frac{\partial H}{\partial\alpha}\in\left(\F_q[\alpha]_Q\right)[T],\,v_Q(D)=v_Q(R)=1$ (i.e. $Q$ satisfies the assumptions of Lemma \ref{lem: gamma}(iv)).
\end{enumerate}
We also denote $$\mathcal S_m=\{\mathfrak p\in\mathcal S:\,\deg\mathfrak p=m\}.$$
\end{defi}

By condition (a) in the definition of $\mathcal S$, for any $\mathfrak p\in S$ we have $P=N_{L/F}(\mathfrak p)\in\mathcal P_F,\,Q=N_{L/K}(\mathfrak p)\in\mathcal P_K,\,\deg P=\deg Q=\deg\mathfrak p$ and (since condition (b) implies $v_{\mathfrak p}(t),v_{\mathfrak p}(\alpha)\ge 0$) $P,Q$ can be identified with monic prime polynomials in $\F_q[t],\F_q[\alpha]$ respectively. In particular condition (b) is well-defined. Since $v_{\mathfrak p}(t),v_{\mathfrak p}(\alpha)\ge 0$ the prime divisor $\mathfrak p$ can be identified with a prime ideal of $\mathcal O$. In a suitable sense most primes in $\mathcal P_L$ are in $\mathcal S$ (see Lemma \ref{lem: S_m count} below) and we aim to show the following:
\begin{enumerate}
\item[(i)] With probability $1+O\left(\frac{\log^{2}n}{n^{1/3}}\right) $ there exists $n^{2/3}\le m\le\frac n{3p\log^2n}$ such that $f_0(t,\alpha)+h(\alpha)^{p}$ is divisible by a unique $\mathfrak p\in\mathcal S_m$.
\item[(ii)] If the above event occurs then with probability $1+O\left(\frac{\log^{2}n}{n^{1/3}}\right) $ we have $P\,\Vert\,\disc(f_0(t,x)+h(x)^p)$, where $P=N_{L/F}(\mathfrak p)$.
\end{enumerate}
Some useful properties of the primes in $\mathcal S$ are summarized in the following

\begin{lem}\label{lem: crt} \begin{enumerate}
\item[(i)]Let $\mathfrak p_1,\ldots,\mathfrak p_k\in\mathcal S$ be distinct with $e=\sum_{i=1}^k\deg\mathfrak p_i\le \frac{n-1}p$. Then
$$\P\left(\mathfrak p_i\mbox{ divides }\,f_0(t,\alpha)+h(\alpha)^{p}\mbox{ for each }1\le i\le k\right)=q^{-e}.$$
\item[(ii)] Let $\mathfrak p\in\mathcal S$.  Then $$\P\left(\mathfrak p^{2}\mbox{ divides } f_0(t,\alpha)+h(\alpha)^{p}\right)=0.$$
\item[(iii)] Let $\mathfrak p\in\mathcal S,\,P=N_{L/F}(\mathfrak p)$ (recall $P\in\mathcal P_F$) and $\mathfrak q\in\mathcal P_L$ a prime divisor lying over $P$. Then
$$\P\left(\mathfrak q\mbox{ divides }\,f_0(t,\alpha)+h(\alpha)^{p}\right)\le q^{-{\min\left(\deg\mathfrak q,\frac{n-1}{p}\right)}}.$$
\end{enumerate}\end{lem}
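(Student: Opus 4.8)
The plan is to treat all three parts via a single elementary fact about the residue fields at primes of $\mathcal S$. Fix $\mathfrak p\in\mathcal S$ and let $Q\in\F_q[\alpha]$ be the monic prime below $\mathfrak p$ (well-defined, since condition (b) gives $v_{\mathfrak p}(\alpha)\ge 0$, so $\mathfrak p$ is a prime ideal of $\mathcal O$ and $\mathfrak p\cap\F_q[\alpha]=(Q)$). Since $\deg_K\mathfrak p=1$ by condition (a), the inclusion $\F_q[\alpha]\hookrightarrow\mathcal O$ induces an isomorphism $\F_q[\alpha]/Q\isom\mathcal O/\mathfrak p$ of finite fields of degree $\deg\mathfrak p=\deg Q$, under which $h(\alpha)$ maps to $h(\alpha)\bmod Q$ for every $h\in\F_q[x]$ and $f_0(t,\alpha)$ maps to a fixed element $\bar\gamma_{\mathfrak p}\in\F_q[\alpha]/Q$. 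Because $\F_q[\alpha]/Q$ is a finite field of characteristic $p$, the $p$-power map is a bijection, so there is a unique $\beta_{\mathfrak p}\in\F_q[\alpha]/Q$ with $\beta_{\mathfrak p}^p=-\bar\gamma_{\mathfrak p}$, and consequently
\[\mathfrak p\mid f_0(t,\alpha)+h(\alpha)^p\iff h(\alpha)\equiv\beta_{\mathfrak p}\pmod{Q}.\]
In other words, divisibility of $f_0(t,\alpha)+h(\alpha)^p$ by $\mathfrak p$ is exactly the condition that $h$ lie in one prescribed residue class modulo the polynomial $Q$, which has degree $\deg\mathfrak p$.

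For part (i) I would apply this to each $\mathfrak p_i$, writing $Q_i\in\F_q[\alpha]$ for the prime below $\mathfrak p_i$; the event in question becomes the system of congruences fixing $h$ modulo each $Q_i$. As the $\mathfrak p_i$ are distinct, the $Q_i$ are pairwise coprime, and since $e=\sum_i\deg Q_i$ is an integer with $e\le(n-1)/p$ we have $e\le\lfloor(n-1)/p\rfloor$; hence every residue class modulo $\prod_iQ_i$ has a representative of degree $<e\le\lfloor(n-1)/p\rfloor$, the reduction map $\F_q[x]_{\le(n-1)/p}\to\prod_i\F_q[x]/Q_i$ is surjective, and all its fibres have size $q^{\lfloor(n-1)/p\rfloor+1-e}$. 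Dividing by $|\F_q[x]_{\le(n-1)/p}|=q^{\lfloor(n-1)/p\rfloor+1}$ gives probability $q^{-e}$. (In the case $k=1$ no coprimality is needed; this already yields $\P(\mathfrak p\mid f_0(t,\alpha)+h(\alpha)^p)=q^{-\deg\mathfrak p}$, the first-moment estimate.)

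Parts (ii) and (iii) then reduce to Lemma \ref{lem: gamma}(iv) and carry essentially no probabilistic content. For (ii): if $\mathfrak p^2\mid f_0(t,\alpha)+h(\alpha)^p$ for some $h$, then taking $u=h(\alpha)\in\F_q[\alpha]$ and noting that by condition (b) the prime $Q$ below $\mathfrak p$ satisfies the hypotheses of Lemma \ref{lem: gamma}(iv) while $v_{\mathfrak p}(\alpha),v_{\mathfrak p}(t)\ge 0$, that lemma gives $v_{\mathfrak p}(f_0(t,\alpha)+u^p)=1$, contradicting the assumption $\mathfrak p^2\mid f_0(t,\alpha)+h(\alpha)^p$; so this event is empty and has probability $0$. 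For (iii), let $\mathfrak q\in\mathcal P_L$ lie over $P=N_{L/F}(\mathfrak p)$; by condition (b), $v_{\mathfrak q}(\alpha),v_{\mathfrak q}(t)\ge 0$ and the prime $Q'\in\F_q[\alpha]$ below $\mathfrak q$ satisfies the hypotheses of Lemma \ref{lem: gamma}(iv). If $\deg_K\mathfrak q>1$, the contrapositive of Lemma \ref{lem: gamma}(iv) shows that no $u\in\F_q[\alpha]$ — in particular no $u=h(\alpha)$ — can satisfy $\mathfrak q\mid f_0(t,\alpha)+u^p$, so the probability is $0$. If $\deg_K\mathfrak q=1$, the reduction of the first paragraph applies verbatim with $Q'$ in place of $Q$ and $\deg Q'=\deg\mathfrak q$: divisibility by $\mathfrak q$ is one prescribed residue class mod $Q'$, and the number of $h\in\F_q[x]_{\le(n-1)/p}$ in a fixed class mod $Q'$ is at most $q^{\lfloor(n-1)/p\rfloor+1-\min(\deg\mathfrak q,\,\lfloor(n-1)/p\rfloor+1)}$, so the probability is $\le q^{-\min(\deg\mathfrak q,\,\lfloor(n-1)/p\rfloor+1)}\le q^{-\min(\deg\mathfrak q,\,(n-1)/p)}$.

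I anticipate no serious obstacle: the one nonelementary input is Lemma \ref{lem: gamma}(iv), already at hand, which drives (ii) and the $\deg_K\mathfrak q>1$ case of (iii); everything else is the observation that over a finite residue field adding a $p$-th power is — after taking the unique $p$-th root — merely a translation, turning divisibility into a congruence, together with Chinese-remainder bookkeeping. The mild care required is to keep the primes $\mathfrak q$ with $\deg_K\mathfrak q>1$ in part (iii) separate from those with $\deg_K\mathfrak q=1$, and to track the floor functions in the fibre counts so that the final bound is exactly $q^{-\min(\deg\mathfrak q,(n-1)/p)}$ with implied constant $1$.
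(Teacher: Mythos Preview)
Your proof follows essentially the same route as the paper's: reduce divisibility by $\mathfrak p\in\mathcal S$ to a single residue condition on $h$ modulo the prime $Q\in\F_q[\alpha]$ below it (using $\deg_K\mathfrak p=1$ and bijectivity of Frobenius on the residue field), then apply the Chinese Remainder Theorem for (i) and Lemma~\ref{lem: gamma}(iv) for (ii) and the $\deg_K\mathfrak q>1$ case of (iii).

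The one step you assert without justification --- ``as the $\mathfrak p_i$ are distinct, the $Q_i$ are pairwise coprime'' --- is also glossed over in the paper's proof (which writes down the CRT isomorphism $\F_q[\alpha]/\prod Q_i\isom\mathcal O/\prod\mathfrak p_i$ without comment). Strictly speaking this inference can fail when $d\ge 2$: two distinct primes of $\mathcal S$ may lie over the same $Q$, and then the probability in (i) is actually $0$ rather than $q^{-e}$ (the reductions of $f_0(t,\alpha)$ modulo the two primes are distinct roots of the separable polynomial $\bar H$, so no single $h^p$ can match both). This does not damage the downstream estimates in \S\ref{sec: sieve}, where (i) is used either with the $\deg\mathfrak p_i$ pairwise distinct or only as an upper bound, but it is worth being aware of.
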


\begin{proof}
{\bf (i).} By the Chinese Remainder Theorem the assertion is equivalent to
\begin{equation}\label{eq: primes divide indep}\P\left(\prod_{i=1}^k\mathfrak p_i\mbox{ divides }\,f_0(t,\alpha)+h(\alpha)^p\right)=q^{-e}.\end{equation}
Since $\mathfrak p_i\in\mathcal S$, we have $\deg_K\mathfrak p_i=1$ and $Q_i=N_{L/K}(\mathfrak p_i)\in\F_q[\alpha]$ is prime with $\deg Q_i=\deg \mathfrak p_i$. There is an isomorphism $\F_q[\alpha]/Q_i\isom\mathcal O/\mathfrak p_i$ induced by the inclusion $\F_q[\alpha]\subset\mathcal O$. Composing with the Frobenius if necessary and using the Chinese Remainder Theorem we see that $h(\alpha)\bmod \prod_{i=1}^kQ_i\mapsto h(\alpha)^{p}\bmod\prod_{i=1}^k\mathfrak p_i$ is an isomorphism $\F_q[\alpha]/\prod_{i=1}^kQ_i\isom\mathcal O/\prod_{i=1}^k\mathfrak p_i$.
Since we assumed $e=\sum_{i=1}^k\deg\mathfrak p_i=\sum_{i=1}^k\deg Q_i\le\frac{n-1}p$ and $h$ is picked uniformly from $\F_q[x]_{\le (n-1)/p}$, the residue $h\bmod\prod_{i=1}^kQ_i$ is uniform and therefore so is $h(\alpha)^{p}\bmod\prod_{i=1}^k\mathfrak p_i$. The claim (\ref{eq: primes divide indep}) follows.

{\bf (ii).} 
By condition (b) in the definition of $\mathcal S$ the assumptions of Lemma \ref{lem: gamma}(iv) are satisfied for $Q=N_{L/K}(\mathfrak p)$. Therefore by Lemma \ref{lem: gamma}(iv) we cannot have $v_{\mathfrak p}(f_0(t,\alpha)+h(\alpha)^p)>1$, equivalently we cannot have $\mathfrak p^2\,|\,f_0(t,\alpha)+h(\alpha)^p$.

{\bf (iii).} Let $\mathfrak p\in\mathcal S,\,P=N_{L/F}(\mathfrak p)$, $\mathfrak q\in\mathcal P_L$ a prime divisor lying over $P$. By condition (b) in the definition of $\mathcal S$, the assumptions of Lemma \ref{lem: gamma}(iv) are satisfied and therefore by Lemma \ref{lem: gamma}(iv) we have $\deg_K\mathfrak q=1$ if $\mathfrak q\,|\,f_0(t,\alpha)+h(\alpha)^{p}$ for any $h\in\F_q[x]$. Hence we may assume $\deg_K\mathfrak q=1$ (otherwise the asserted probability is 0).

Now if $\deg\mathfrak q\le\frac{n-1}p$ we may argue as in part (i) to obtain $\P(\mathfrak q\,|\,f_0(t,\alpha)+h(\alpha)^{p})=q^{-\deg\mathfrak q}$. If on the other hand $\deg\mathfrak q>\frac{n-1}p$ then denoting $Q=N_{L/K}(\mathfrak q)\in\mathcal P_K$ we have (as in part (i)) an isomorphism $\F_q[\alpha]/Q\isom\mathcal O/\mathfrak q$ given by $h(\alpha)\bmod Q\mapsto h(\alpha)^{p}\bmod\mathfrak q$. Since $\deg Q=\deg\mathfrak q>\frac{n-1}p$, the map $h\mapsto \gamma+h(\alpha)^{p}\bmod\mathfrak q$ is injective on $\F_q[x]_{\le(n-1)/p}$ and we obtain
$$\P\left(\mathfrak q\,|\,f_0(t,\alpha)+h(\alpha)^{p}\right)\le \left|\F_q[x]_{\le(n-1)/p}\right|^{-1}=q^{-\lfloor\frac{n-1}p\rfloor-1}\le q^{-\frac{n-1}p},$$
as required.

\end{proof}

\begin{lem}\label{lem: S_m count}$|\mathcal S_m|=\frac{q^m}m+O({n^2}q^{m/2})$ whenever $m\ge 2\log_qn$. \end{lem}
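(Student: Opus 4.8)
The plan is to estimate $|\mathcal S_m|$ by counting \emph{all} prime divisors of $L$ of degree $m$ and then subtracting the ones violating condition (a) or condition (b). The key point is that the violators form a set whose size is polynomial in $n$; since the hypothesis $m\ge 2\log_q n$ forces $q^{m/2}\ge n$, such a polynomial error is absorbed into the claimed bound $O(n^2q^{m/2})$.

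For the main term I would invoke the effective prime divisor theorem for the function field $L$. Because $g$ is absolutely irreducible, $L$ has full constant field $\F_q$; and because $\deg_t g=d$ is fixed while $\deg_x g\le\deg_x f_0'\le n-1$, the plane model $g(t,x)=0$ has bidegree $(d,\deg_x g)$, so $L$ has genus $\le(d-1)(\deg_x g-1)=O(n)$. The Weil bound for the zeta function of $L$ (see \cite{Ros02,Sti09}), together with this genus bound, then gives
\[
\#\{\mathfrak p\in\mathcal P_L:\deg\mathfrak p=m\}=\frac{q^m}{m}+O\!\left(n\,q^{m/2}\right).
\]
I will also use the elementary facts $[L:F]=\deg_x g\le n$ and $[L:K]=d$, so that every prime of $F$ has at most $n$ primes of $L$ above it and every prime of $K$ has at most $d$.

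It remains to bound the number of degree-$m$ primes of $L$ violating (a) or (b). Violating (a) means $\deg_F\mathfrak p\ge 2$ or $\deg_K\mathfrak p\ge 2$, i.e. $\mathfrak p$ lies above a prime of $F$, or of $K$, of degree $\le m/2$; there are $O(q^{m/2})$ such primes in $F$ and in $K$, each with at most $n$, resp.\ $d$, primes of $L$ above it, so (a) is violated by at most $O(n q^{m/2})$ primes of degree $m$. For (b), set $\gamma=f_0(t,\alpha)$, let $H\in K[T]$ be its minimal polynomial over $K$, and put $D=\disc H$, $R=\res_T(H,\partial H/\partial\alpha)$, all nonzero by Lemma \ref{lem: gamma}. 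From $\mathrm{ht}_L(t)\le[L:F]\le n$, $\mathrm{ht}_L(\alpha)=[L:K]=d$ and subadditivity of heights over the $\le n+1$ monomials of $f_0(t,\alpha)$ one gets $\mathrm{ht}_L(\gamma)=O(n^2)$; hence, since Galois conjugation preserves heights and discriminants, resultants and symmetric functions are polynomial in the coefficients, the $K$-heights of $D$, $R$ and of all coefficients of $H$ and $\partial H/\partial\alpha$ are $O(n^2)$. Thus at most $O(n^2)$ primes $Q\in\mathcal P_K$ divide a numerator or denominator of $D$, $R$, or of a coefficient of $H$ or $\partial H/\partial\alpha$; adding the $O(1)$ primes of $F$ lying below a pole of $t$ or of $\alpha$ in $L$, condition (b) can fail for $\mathfrak p$ only if some prime of $L$ above the prime $P\in\mathcal P_F$ beneath $\mathfrak p$ lies over such a bad $Q$ (or is a pole of $t$ or $\alpha$). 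Each bad $Q$ has $\le d$ primes of $L$ above it, hence lies over $\le d$ primes of $F$, so at most $O(n^2)$ primes $P$ of $F$ can cause (b) to fail, and over each of them sit $\le n$ primes of $L$; so (b) is violated by $O(n^3)$ primes of $L$ altogether. Combining the three estimates, $|\mathcal S_m|=\frac{q^m}{m}+O(n q^{m/2})+O(n^3)$, and since $m\ge 2\log_q n$ gives $q^{m/2}\ge n$ and hence $n^3\le n^2q^{m/2}$, this is $\frac{q^m}{m}+O(n^2q^{m/2})$.

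I expect the only genuinely delicate part to be the height bookkeeping for condition (b): one must check carefully that $\gamma=f_0(t,\alpha)$, and then the resultant/discriminant-type quantities $D,R$ and the coefficients of $H$, all have $K$-heights polynomially bounded in $n$ with an exponent giving the stated error term; the genus estimate, the prime-counting input, and the treatment of condition (a) are routine. One should also keep in mind that condition (b) is a constraint on \emph{all} prime divisors of $L$ above a given $P\in\mathcal P_F$, so a single bad sibling excludes $P$ together with all (at most $[L:F]$) primes of $L$ above it --- which has been accounted for in the count above.
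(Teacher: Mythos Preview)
Your proposal is correct and follows essentially the same approach as the paper's proof: count all degree-$m$ primes of $L$ via the Weil bound, then subtract the violators of conditions (a) and (b), with (a) contributing $O(nq^{m/2})$ and (b) contributing $O(n^3)$, the latter being absorbed into $O(n^2q^{m/2})$ via the hypothesis $m\ge 2\log_q n$. The only noteworthy difference is that you use the bidegree genus bound $(d-1)(\deg_xg-1)=O(n)$ for the curve $g(t,x)=0$ in $\mathbb P^1\times\mathbb P^1$, whereas the paper uses the cruder bound $O(n^2)$ coming from the total degree; this gives you a slightly sharper main-term error $O(nq^{m/2})$ in the prime-counting step, but since the $O(n^3)$ contribution from condition (b) is what forces the final $O(n^2q^{m/2})$, the improvement is immaterial to the stated lemma.
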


\begin{proof} First denote $\mathcal P_{L,m}=\{\mathfrak p\in\mathcal P_L:\,\deg\mathfrak p=m\}$. Since we assumed that $g(t,x)$ is absolutely irreducible, there exists a smooth projective curve $C_L$ defined over $\F_q$ with function field $L$. The curve $C_L$ has an affine plane model given by the equation $g(t,x)=0$. Since $\deg g=O(n)$, the genus of $C_L$ is $O(n^2)$. Therefore by the Weil bound
$$|\mathcal P_{L,m}|=\frac{q^m}m+O\left(\frac{n^2}mq^{m/2}\right)=\frac {q^m}m+O(n^2q^{m/2}).$$
It remains to check that for each of the conditions appearing in the definition of $\mathcal S$, the number of $\mathfrak p\in\mathcal P_{L,m}$ violating the condition is $O(n^2q^{m/2})$.

First we bound the number of $\mathfrak p\in\mathcal P_{L,m}$ violating condition (a), i.e. satisfying $\deg_F\mathfrak p>1$ or $\deg_K\mathfrak p>1$. 
If $\mathfrak p\in\mathcal P_{L,m}$ and say $\deg_F\mathfrak p>1$, denote by $P$ the prime of $\mathcal P_F$ lying under $\mathfrak p$. Then 
$\deg P=\deg\mathfrak 
p/\deg_F\mathfrak p\le m/2$ and there are $O(q^{m/2})$ possible values of $P$, over 
each of which lie at most $n$ primes of $L$ (since $[L:F]\le n$). Overall we 
have $O(nq^{m/2})$ values of $\mathfrak p\in\mathcal P_{L,m}$ such 
that $\deg_F\mathfrak p>1$ and 
similarly (using $[L:K]\le d=O(1)$) there are at most 
$O(q^{m/2})$ primes 
$\mathfrak p\in\mathcal P_{L,m}$ with $\deg_K\mathfrak p>1$. In total the number of $\mathfrak p\in\mathcal P_{L,m}$ violating condition (b) is $O(nq^{m/2})$, which is negligible.

Next we bound the number of $\mathfrak p\in\mathcal S_m$ violating condition (b).
Since $[L:F]\le\deg_xg=O(n),\,[L:K]=\deg_t g=d=O(1)$, we have
$$\mathrm{ht}_L(t)=[L:F]\cdot\mathrm{ht}_F(t)=[L:F]=O(n),$$
$$\mathrm{ht}_L(\alpha)=[L:K]\cdot\mathrm{ht}_K(\alpha)=[L:K]=O(1),$$
$$\mathrm{ht}_L(f_0(t,\alpha))\le (\deg f_0)\cdot(\mathrm{ht}_L(t)+\mathrm{ht}_L(\alpha))=O(n^2)$$
and since $H$ is the monic minimal polynomial of $f_0(t,\alpha)$ over $K$ (and $[L:K]=O(1)$) its coefficients have height $O(n^2)$. Consequently (using $\deg H=O(1)$) we have $\mathrm{ht}_LD,\mathrm{ht}_L(R)=O(n^2)$.
Hence the number of primes of $L$ at which one of $\alpha,t,D,R$ or a coefficient of $H$ or $\frac{\partial H}{\partial\alpha}$ has a zero or a pole is $O(n^2)$. If $\mathfrak q$ is such a prime, the number of $\mathfrak p\in\mathcal P_L$ lying over the same $P\in\mathcal P_F$ as $\mathfrak q$ is $\le[L:F]=O(n)$. In total we have $O(n^3)$ primes $\mathfrak p$ violating condition (b), which is $O(n^2q^{m/2})$ provided $m\ge 2\log_qn$.

\end{proof}
\noindent

\section{Discriminants in $\mathcal F_{f_0}$: the sieve}
\label{sec: sieve}

In the present section we maintain the setup and notation of \S\ref{sec: disc}.
\\ \\
{\bf Notation.} For any natural number $m$ denote by $E_m$ the event that $f_0(t,\alpha)+h(\alpha)^{p}$ has a unique prime divisor $\mathfrak p\in \mathcal S_m$. Denote by $E_m'$ the event that $\disc_x(f_0(t,x)+h(x)^p)\in\F_q[t]$ has a prime divisor of degree $m$ with multiplicity $1$.
\\

In the present section we establish by an elementary sieve argument that with probability close to 1, one of the events $E_m$ with $n^{2/3}\le m\le\frac n{3p\log^2n}$ occurs. In the next section we will see how this implies the same for $E_m'$ by showing that $\sum_{n^{2/3}\le m\le\frac{n}{3p\log^2n}}\P(E_m\setminus E_m')$ is small. This will give us (\ref{eq: key estimate}), completing the proof of Proposition \ref{prop: disc}.

\begin{lem}\label{lem: cap E} Let $k,M,N$ be natural numbers such that $8\log_qn\le M\le N\le \frac {n-1}{2pk}$ and $k\le M$. For any $M\le m_1<m_2<\ldots<m_k\le N$ we have
$$\P(E_{m_1}\cap\ldots\cap E_{m_k})=\frac 1{\prod_{i=1}^km_i}\left(1+O\left(\frac kM\right)\right).$$

\end{lem}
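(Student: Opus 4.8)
The plan is to recast $E_m$ in terms of the random variable $X_i:=\#\{\mathfrak p\in\mathcal S_{m_i}:\ \mathfrak p\mid f_0(t,\alpha)+h(\alpha)^p\}$, so that $E_{m_i}=\{X_i=1\}$, and to estimate $\P(X_1=\dots=X_k=1)$ by combining the first moment with second-order Bonferroni inequalities. Writing $Y_{\mathfrak p}$ for the indicator of $\mathfrak p\mid f_0(t,\alpha)+h(\alpha)^p$, the single structural input is that any family of at most $2k$ distinct primes of $\mathcal S$ with degrees in $[M,N]$ has total degree $\le 2kN\le\frac{n-1}p$, so by Lemma \ref{lem: crt}(i) the probability that $f_0(t,\alpha)+h(\alpha)^p$ is divisible by all of them equals $q^{-(\text{sum of their degrees})}$ exactly. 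Combined with the count $|\mathcal S_m|=\frac{q^m}{m}+O(n^2q^{m/2})=\frac{q^m}{m}(1+O(n^{-1}))$ from Lemma \ref{lem: S_m count} — valid here because $m\ge M\ge 8\log_q n$ gives $q^{m/2}\ge n^4$ while $m\le N\le n$ — this yields $\mathbb E[X_i]=|\mathcal S_{m_i}|q^{-m_i}=\frac1{m_i}(1+O(n^{-1}))$, and more generally any mixed moment built from at most $2k$ of the $Y_{\mathfrak p}$'s factors as a product of such first moments.

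For the upper bound one uses only $\prod_i\mathbf{1}[X_i=1]\le\prod_i X_i$; since the $m_i$ are distinct, every tuple $(\mathfrak p_1,\dots,\mathfrak p_k)\in\prod_i\mathcal S_{m_i}$ consists of distinct primes, so
$$\P\Big(\bigcap_i E_{m_i}\Big)\ \le\ \mathbb E\Big[\prod_i X_i\Big]\ =\!\!\sum_{(\mathfrak p_1,\dots,\mathfrak p_k)}\!\!q^{-\sum_i m_i}\ =\ \prod_i\bigl(|\mathcal S_{m_i}|q^{-m_i}\bigr)\ =\ \frac1{\prod_i m_i}\Bigl(1+O\bigl(\tfrac{k}{M}\bigr)\Bigr),$$
using $\prod_i(1+O(n^{-1}))=1+O(k/n)=1+O(k/M)$ since $k\le M\le n$.

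For the lower bound I would write $\bigcap_i\{X_i=1\}=\bigcap_i\{X_i\ge1\}\setminus\bigcup_j\{X_j\ge2\}$ and treat the two parts separately. The event $\bigcap_i\{X_i\ge1\}$ is a union $\bigcup_{(\mathfrak p_i)}\bigcap_i\{\mathfrak p_i\mid f_0(t,\alpha)+h(\alpha)^p\}$, and the Bonferroni lower bound $\P(\bigcup B_\alpha)\ge\sum_\alpha\P(B_\alpha)-\sum_{\alpha<\beta}\P(B_\alpha\cap B_\beta)$ together with Lemma \ref{lem: crt}(i) gives $\P(\bigcap_i\{X_i\ge1\})\ge\prod_i(|\mathcal S_{m_i}|q^{-m_i})\bigl(1-O(k/M)\bigr)$, the second-order term being at most $\tfrac12\prod_i(|\mathcal S_{m_i}|q^{-m_i})\cdot\bigl(\prod_i(1+|\mathcal S_{m_i}|q^{-m_i})-1\bigr)$ with $\prod_i\bigl(1+\tfrac{1+O(n^{-1})}{m_i}\bigr)-1\le e^{\sum_i(1+O(n^{-1}))/m_i}-1=O(k/M)$ (the bound $k\le M$ keeps the exponent $\le 2$). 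For the removed part, $\bigcap_i\{X_i\ge1\}\cap\{X_j\ge2\}=\{X_j\ge2\}\cap\bigcap_{i\ne j}\{X_i\ge1\}$, so its probability is at most $\mathbb E\bigl[\binom{X_j}{2}\prod_{i\ne j}X_i\bigr]=\bigl(\binom{|\mathcal S_{m_j}|}{2}q^{-2m_j}\bigr)\prod_{i\ne j}(|\mathcal S_{m_i}|q^{-m_i})=\frac1{2m_j}\cdot\frac1{\prod_i m_i}(1+O(k/M))$; the decisive point is the extra factor $1/m_j$, so that summing over $j$ gives $O(k/M)\cdot\frac1{\prod_i m_i}$ rather than a term of size $1/M^2$. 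Subtracting, $\P(\bigcap_i E_{m_i})\ge\frac1{\prod_i m_i}(1-O(k/M))$, which together with the upper bound is the lemma.

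The argument is elementary; the two points demanding care are (i) arranging that every divisibility pattern entering the computation involves primes of total degree $\le\frac{n-1}p$, so that Lemma \ref{lem: crt}(i) supplies an exact value — this is exactly why one stops at the second Bonferroni term and why the hypothesis $N\le\frac{n-1}{2pk}$ is imposed — and (ii) checking in the lower bound that each error term is $O(k/M)$ relative to the main term $\frac1{\prod_i m_i}$ and not merely $O(k/M^2)$, which would be too weak once $k\ge2$; this is where one must exploit that the ``$X_j\ge2$'' correction gains the factor $1/m_j$.
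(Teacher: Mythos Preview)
Your proof is correct and follows essentially the same approach as the paper: both arguments express $\bigcap_i E_{m_i}$ via the indicator events $A_{\mathfrak p}=\{\mathfrak p\mid f_0(t,\alpha)+h(\alpha)^p\}$, apply second-order Bonferroni bounds, and feed in the exact value $\P(\bigcap A_{\mathfrak p_i})=q^{-\sum\deg\mathfrak p_i}$ from Lemma~\ref{lem: crt}(i) (valid because the total degree is at most $2kN\le(n-1)/p$) together with $|\mathcal S_m|=\frac{q^m}{m}(1+O(n^{-1}))$ from Lemma~\ref{lem: S_m count}. The only difference is organizational: the paper writes $\bigcap_i E_{m_i}=\bigcup_{\underline{\mathfrak p}}A_{\underline{\mathfrak p}}\setminus\bigcup_{\underline{\mathfrak p}\neq\underline{\mathfrak q}}(A_{\underline{\mathfrak p}}\cap A_{\underline{\mathfrak q}})$ and bounds the double sum by splitting over the set $I=\{i:\mathfrak p_i\neq\mathfrak q_i\}$, whereas you phrase the same computation in moment language, separating $\bigcap_i\{X_i\ge1\}$ from the corrections $\{X_j\ge2\}$; the resulting error terms are the same $O(k/M)$ relative to $\prod_i m_i^{-1}$.
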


\begin{proof}
    For each $\mathfrak p\in\mathcal S$ denote by $A_{\mathfrak p}$ the event that $\mathfrak p\,|\,f_0(t,\alpha)+h(\alpha)^p$. Using the shorthand 
    $$\underline{\mathfrak p}=(\mathfrak p_1,\ldots,\mathfrak p_k),\quad A_{\underline{\mathfrak p}}=A_{\mathfrak p_1}\cap\ldots \cap A_{\mathfrak p_k},\quad\mathcal S_{\underline m}=\mathcal S_{m_1}\times\cdots\times S_{m_k}$$ we have
    \begin{equation}\label{eq: cap E as union}E_{m_1}\cap\ldots\cap E_{m_k}=\Cup_{\underline{\mathfrak p}\in\mathcal S_{\underline m}}A_{\underline{\mathfrak p}}
    \setminus
    \Cup_{\underline{\mathfrak p},\underline{\mathfrak q}\in\mathcal S_{\underline m}\atop{\underline{\mathfrak p}\neq\underline{\mathfrak q}}}(A_{\underline{\mathfrak p}}\cap A_{\underline{\mathfrak q}}).\end{equation}
and by inclusion-exclusion
\begin{equation*}\sum_{\underline{\mathfrak p}\in\mathcal S_{\underline m}}\P(A_{\underline{\mathfrak p}})-\frac 12\sum_{\underline{\mathfrak p},\underline{\mathfrak q}\in\mathcal S_{\underline m}\atop{\underline{\mathfrak p}\neq\underline{\mathfrak q}}}\P(A_{\underline{\mathfrak p}}\cap A_{\underline{\mathfrak q}})\le \P\left(\Cup_{\ul{\mf p}\in\mc S_{\ul m}}A_{\ul{\mf p}}\right)\le \sum_{\underline{\mathfrak p}\in\mathcal S_{\underline m}}\P(A_{\underline{\mathfrak p}}).\end{equation*}
Combined with (\ref{eq: cap E as union}) this gives
\begin{equation}\label{eq: inc exc}\sum_{\underline{\mathfrak p}\in\mathcal S_{\underline m}}\P(A_{\underline{\mathfrak p}})-\sum_{\underline{\mathfrak p},\underline{\mathfrak q}\in\mathcal S_{\underline m}\atop{\underline{\mathfrak p}\neq\underline{\mathfrak q}}}\P(A_{\underline{\mathfrak p}}\cap A_{\underline{\mathfrak q}})\le \P(E_{m_1}\cap\ldots \cap E_{m_k})\le \sum_{\underline{\mathfrak p}\in\mathcal S_{\underline m}}\P(A_{\underline{\mathfrak p}}).\end{equation}

By Lemma \ref{lem: crt}(i) and the assumption $N\le \frac{n-1}{2pk}$, which implies $m_1+\ldots+m_k\le Nk\le \frac{n-1}p$, we have 
\begin{equation}\label{eq: prob A_p}\P(A_{\underline{\mathfrak p}})=\P(A_{\mathfrak p_1}\cap\ldots\cap A_{\mathfrak p_k})=q^{-\sum_{i=1}^km_i}\end{equation}
whenever $\underline{\mathfrak p}\in\mathcal S_{\underline m}$ (i.e. $\mathfrak p_i\in\mathcal S_{m_i},\,1\le i\le k$). Similarly, if for $\ul{\mf p},\ul{\mf q}\in\mc S_{\ul m}$ we denote 
$$I(\ul{\mf p},\ul{\mf q})=\{1\le i\le k:\,\mf p_i\neq\mf q_i\}$$
then
\begin{equation}\label{eq: prob A_pq}\P(A_{\ul{\mf p}}\cap A_{\ul{\mf q}})=\P\left(\bigcap_{i=1}^kA_{\mf p_i}\cap\bigcap_{j\in I(\ul{\mf p},\ul{\mf q})}A_{\mf q_j}\right)=q^{-\sum_{i=1}^km_i-\sum_{j\in I(\ul{\mf p},\ul{\mf q})}m_j}\end{equation} (Lemma \ref{lem: crt}(i) applies because $2\sum_{i=1}^km_i\le 2Nk\le\frac{n-1}p$).

Using (\ref{eq: prob A_p}) and Lemma \ref{lem: S_m count} we obtain
\begin{multline}\label{eq: sum size cap A_p}\sum_{\underline{\mathfrak p}\in\mathcal S_{\underline m}}\P(A_{\underline{\mathfrak p}})=q^{-\sum_{i=1}^km_i}|\mathcal S_{\ul m}|=q^{-\sum_{i=1}^km_i}\cdot\prod_{i=1}^k|\mathcal S_{m_i}|=q^{-\sum_{i=1}^km_i}\frac {q^{\sum_{i=1}^km_i}}{\prod_{i=1}^km_i}\prod_{i=1}^k(1+O(n^2q^{-{m_i/2}}))
\\= \frac{1}{\prod_{i=1}^km_i}(1+O(n^2q^{-M/2}))^n=\frac{1}{\prod_{i=1}^km_i}\left(1+O\left(\frac 1n\right)\right)\end{multline}
(in the last equality we used the condition $M\ge 8\log_q n$). Similarly, using (\ref{eq: prob A_pq}) we obtain
\begin{multline}\label{eq: sum size cap A_pq}
 \sum_{\ul{\mf p},\ul{\mf q}\in\mc S_{\ul m}\atop{\ul{\mf p}\neq\ul{\mf q}}}\P(A_{\ul{\mf p}}\cap A_{\ul{\mf q}})=\sum_{\emptyset\neq I\subset\{1,\ldots,k\}}\sum_{\ul{\mf p},\ul{\mf q}\in\mc S_{\ul m}\atop{I(\ul{\mf p},\ul{\mf q})=I}}\P(A_{\ul{\mf p}}\cap A_{\ul{\mf q}})=
 \sum_{\emptyset\neq I\subset\{1,\ldots,k\}}\sum_{\ul{\mf p},\ul{\mf q}\in\mc S_{\ul m}\atop{I(\ul{\mf p},\ul{\mf q})=I}}q^{-\sum_{i=1}^km_i-\sum_{j\in I}m_j}
 \\=\sum_{\emptyset\neq I\subset\{1,\ldots,k\}}q^{-\sum_{i=1}^km_i-\sum_{j\in I}m_j}\prod_{i=1}^k|\mathcal S_{m_i}|\prod_{j\in I}|\mathcal S_{m_j}|=\sum_{\emptyset\neq I\subset\{1,\ldots,k\}}\frac 1{\prod_{i=1}^km_i\prod_{j\in I}m_j}\left(1+O\left(\frac 1n\right)\right)
 \\ \ll
 \frac 1{\prod_{i=1}^km_i}\sum_{\emptyset\neq I\subset\{1,\ldots,k\}}\frac 1{M^{|I|}}\ll\frac 1{\prod_{i=1}^km_i}\sum_{l=1}^k\left(\begin{array}{cc}k\\l\end{array}\right) \frac1{M^l}\le \frac 1{\prod_{i=1}^km_i}\left(1+\frac 1M\right)^k=\frac 1{\prod_{i=1}^km_i}\cdot O\left(\frac kM\right).
\end{multline}

Combining (\ref{eq: inc exc}),(\ref{eq: sum size cap A_p}),(\ref{eq: sum size cap A_pq}) we obtain the assertion of the lemma.

\end{proof}

\begin{prop}\label{prop: cup E} Let $\log^2n\le M\le N\le \frac{n-1}{2p\log^2 n}$ be integers. Then $$\P\left(\Cup_{M\le m\le N}E_m\right)=1+O\left(\frac MN+\frac{N\log^2N}{M^2}\right).$$ \end{prop}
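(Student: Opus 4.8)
The plan is to control the complementary probability $\P\bigl(\bigcap_{M\le m\le N}\overline{E_m}\bigr)$ by a truncated inclusion--exclusion, with Lemma~\ref{lem: cap E} as the only input. One may assume $N>2M$, since otherwise $M/N\ge\tfrac12$ and the statement is vacuous. For $k\ge1$ put
\[
S_k=\sum_{M\le m_1<\dots<m_k\le N}\P\bigl(E_{m_1}\cap\dots\cap E_{m_k}\bigr),\qquad S_0=1,
\]
and let $e_k$ be the $k$-th elementary symmetric function of $\{1/m:\ M\le m\le N\}$, so that
\[
\sum_{k\ge0}(-1)^ke_k=\prod_{m=M}^N\Bigl(1-\tfrac1m\Bigr)=\frac{M-1}{N},\qquad
\sum_{k\ge0}e_k=\prod_{m=M}^N\Bigl(1+\tfrac1m\Bigr)=\frac{N+1}{M}.
\]
Bonferroni's inequality applied to the complementary events gives, for every $J$ with $2J\le N-M+1$,
\[
\P\Bigl(\bigcap_{M\le m\le N}\overline{E_m}\Bigr)\le\sum_{k=0}^{2J}(-1)^kS_k .
\]
Under the hypotheses $\log^2 n\le M\le N\le\frac{n-1}{2p\log^2 n}$ one checks that $k\le M$ and $N\le\frac{n-1}{2pk}$ for every $k\le\log^2 n$, so Lemma~\ref{lem: cap E} applies and gives $S_k=e_k\bigl(1+O(k/M)\bigr)$ uniformly in that range.

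I would next calibrate the truncation order. Write $e_1=\sum_{M\le m\le N}\tfrac1m=\log(N/M)+O(1/M)$ and fix an even integer $2J$ with $4e_1\le 2J\le 4e_1+2$; since $N>2M$ one has $e_1\ge\log2$, and since $e_1\le\log N+O(1)$ the hypotheses on $M,N$ force $2J\le\min\{N-M+1,\ M,\ \tfrac{n-1}{2pN}\}$ for $n$ large, so Lemma~\ref{lem: cap E} is available for all $0\le k\le 2J$. From the standard inequality $e_k\le e_1^{\,k}/k!$ and $k!\ge(k/e)^k$ one gets $e_k\le(e e_1/k)^k\le(e/4)^k$ for $k\ge 2J\ge 4e_1$, whence a geometric-series estimate yields $\sum_{k>2J}e_k\ll(e/4)^{4e_1}\ll e^{-e_1}\ll M/N$, and therefore
\[
\sum_{k=0}^{2J}(-1)^ke_k=\frac{M-1}{N}-\sum_{k>2J}(-1)^ke_k=\frac{M-1}{N}+O(M/N)=O(M/N).
\]
The error from replacing $S_k$ by $e_k$ is $O\bigl(\tfrac1M\sum_{k\ge0}k\,e_k\bigr)$; differentiating $\prod_m(1+z/m)=\sum_k e_kz^k$ at $z=1$ gives $\sum_k k\,e_k=\tfrac{N+1}{M}\sum_{m=M}^N\tfrac1{m+1}\ll \tfrac NM\log(N/M)$, so
\[
\Bigl|\sum_{k=0}^{2J}(-1)^k(S_k-e_k)\Bigr|\ll\frac1M\sum_{k\ge0}k\,e_k\ll\frac{N\log(N/M)}{M^2}\le\frac{N\log^2N}{M^2}.
\]
Combining the last three displays gives $\P\bigl(\bigcap_{M\le m\le N}\overline{E_m}\bigr)\ll M/N+N\log^2N/M^2$, which is equivalent to the assertion.

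The crux is calibrating the truncation order. A plain second-moment (two-term Bonferroni) argument gives only $\P(\bigcup_m E_m)=1-O\bigl(1/\log(N/M)\bigr)$, far weaker than what is needed: the target error $M/N$ is exponentially small in $\log(N/M)$, so the inclusion--exclusion must be pushed to order $\asymp\log(N/M)$ — which is exactly why Lemma~\ref{lem: cap E} is proved for arbitrary $k$ rather than just $k=1,2$. That order must be large enough ($\ge 4e_1$, say) for the tail $\sum_{k>2J}e_k$ of the telescoping product to be $\ll M/N$, and simultaneously small enough ($\le M$ and $\le\tfrac{n-1}{2pN}$) for Lemma~\ref{lem: cap E} to remain valid with its stated error; the hypotheses $\log^2 n\le M$ and $N\le\tfrac{n-1}{2p\log^2 n}$ are precisely what make both requirements compatible. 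Everything else — the elementary-symmetric identities, the tail bound, and the estimate for $\sum_k k\,e_k$ — is routine.
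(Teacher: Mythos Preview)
Your proof is correct and follows essentially the same approach as the paper's: a truncated inclusion--exclusion based on Lemma~\ref{lem: cap E}, with separate control of the truncation tail and of the accumulated $O(k/M)$ errors. The only cosmetic differences are that you phrase things via the complementary probability and truncate at order $2J\asymp\log(N/M)$ (which actually yields the slightly sharper intermediate error $O\bigl(N\log(N/M)/M^2\bigr)$ before you relax it), whereas the paper works directly with $\P(\bigcup E_m)$, truncates at order $l\asymp\log^2 N$, and bounds the tail by sandwiching $U(l)$ against $U(l-1)$.
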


\begin{proof} Let $l\le\log^2n$ be an even natural number. By inclusion-exclusion and using Lemma \ref{lem: cap E} (note that for large enough $n$ and any $k\le l$ the conditions of Lemma \ref{lem: cap E} hold) we have
\begin{multline}\label{eq: prob cup E}
\P\left(\Cup_{M\le m\le N}E_m\right)\ge 
\sum_{k=1}^l(-1)^{k+1}\sum_{M\le m_1<m_2<\ldots<m_k\le N}\P(\cap_{i=1}^kE_{m_i})\\
=\sum_{k=1}^l(-1)^{k+1}\sum_{M\le m_1<m_2<\ldots<m_k\le N}\frac 1{\prod_{i=1}^km_i}\left(1+O\left(\frac lM\right)\right)\\
=\sum_{k=1}^l(-1)^{k+1}\sum_{M\le m_1<m_2<\ldots<m_k\le N}\frac 1{\prod_{i=1}^km_i}+O\left(\frac l{M}\sum_{k=1}^l\sum_{M\le m_1<m_2<\ldots<m_k\le N}\frac 1{\prod_{i=1}^km_i}\right).
\end{multline}
We have
$$\sum_{k=1}^l\sum_{M\le m_1<m_2<\ldots<m_k\le N}\frac 1{\prod_{i=1}^km_i}\le\prod_{M\le m\le N}\left(1+\frac 1m\right)\le e^{\sum_{m=M}^N\frac 1m}=e^{\log N-\log M+O(1)}=O\left(\frac NM\right)
$$
and hence the error term in (\ref{eq: prob cup E}) is $O\left(\frac{lN}{M^2}\right)$.
For any natural number $l$ denote
$$U(l)=\sum_{k=1}^l(-1)^{k+1}\sum_{M\le m_1<m_2<\ldots<m_k\le N}\frac 1{\prod_{i=1}^km_i}$$
Then for even $l$ we have 
$$U(l)\le 1-\prod_{M\le m\le N}\left(1-\frac 1m\right)\le U(l-1)$$
(e.g. apply inclusion-exclusion to a sequence of independent events $F_m,\,M\le m\le N$ with $\P(F_m)=\frac 1m$).
Now 
$$\prod_{M\le m\le N}\left(1-\frac 1m\right)\le e^{-\sum_{m=M}^N\frac 1m}= e^{\log M-\log N+O(1)}=O\left(\frac MN\right)$$
and
\begin{multline*}|U(l)-U(l-1)|\le\sum_{M\le m_1<m_2<\ldots<m_l\le N}\frac 1{\prod_{i=1}^lm_i}=\frac 1{l!}\sum_{M\le m_1,\ldots,m_l\le N\atop{\mathrm{distinct}}}\frac 1{\prod_{i=1}^lm_i}
\le\frac 1{l!}\left(\sum_{M\le m\le N}\frac 1m\right)^l\\
=O\left(\frac{(\log N-\log M+1)^l}{l!}\right).\end{multline*}
Therefore
$$U(l)=1+O\left(\frac MN+\frac{(\log N-\log M+1)^l}{l!}\right)$$
and we conclude that $$\P\left(\Cup_{M\le m\le N}E_m\right)=U(l)+O\left(\frac{lN}{M^2}\right)=1+O\left(\frac{(\log N-\log M+1)^l}{l!}+\frac MN+\frac{lN}{M^2}\right).$$
Taking $l=2\big\lfloor\frac{\log^2N}2\big\rfloor$ we obtain the desired result.
\end{proof}

\begin{cor}\label{cor: cup E} $$\P\left(\Cup_{n^{1/3}\le m\le\frac{n}{3p\log^2n}}E_m\right)=1+O\left(\frac{\log^2n}{n^{1/3}}\right).$$\end{cor}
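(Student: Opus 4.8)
This statement is essentially immediate from Proposition \ref{prop: cup E}, and the plan is simply to feed that proposition the right endpoints and bound the resulting error. First I would set $M=\lceil n^{2/3}\rceil$ and $N=\lfloor\frac{n}{3p\log^2 n}\rfloor$ and check that the hypotheses $\log^2 n\le M\le N\le\frac{n-1}{2p\log^2 n}$ of Proposition \ref{prop: cup E} hold for all large $n$. This is a routine inequality check: $\log^2 n\le n^{2/3}\le M$ for large $n$; $M=O(n^{2/3})$ while $N\gg n/\log^2 n$ gives $M\le N$ for large $n$; and $N\le\frac{n}{3p\log^2 n}\le\frac{n-1}{2p\log^2 n}$ once $n\ge 3$, the last inequality being equivalent to $2n\le 3(n-1)$.

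Next I would substitute $M,N$ into the conclusion of Proposition \ref{prop: cup E}, obtaining $\P\bigl(\Cup_{M\le m\le N}E_m\bigr)=1+O\bigl(\frac MN+\frac{N\log^2 N}{M^2}\bigr)$, and estimate the two error terms separately. Since $M\le 2n^{2/3}$ and $N\gg n/\log^2 n$ one gets $\frac MN=O(\log^2 n/n^{1/3})$; and since $\log N\le\log n$, $N\le\frac{n}{3p\log^2 n}$ and $M\ge n^{2/3}$ one gets $\frac{N\log^2 N}{M^2}\le\frac{n\log^2 n}{3p\log^2 n\cdot n^{4/3}}=O(n^{-1/3})$. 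Both are absorbed into $O(\log^2 n/n^{1/3})$, so $\P\bigl(\Cup_{M\le m\le N}E_m\bigr)=1+O(\log^2 n/n^{1/3})$.

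Finally I would widen the index range to the one appearing in the corollary. Since $n^{1/3}\le M$, the set of integers $m$ with $n^{1/3}\le m\le\frac{n}{3p\log^2 n}$ contains $\{m:M\le m\le N\}$ (these integers are exactly those with $m\le N$), so $\Cup_{n^{1/3}\le m\le n/(3p\log^2 n)}E_m\supseteq\Cup_{M\le m\le N}E_m$, hence has probability at least $1+O(\log^2 n/n^{1/3})$; combined with the trivial bound $\le 1$ this gives the claim.

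The proof has no genuine obstacle; the only point needing any care is the choice of $M$. One must take $M$ of order at least $n^{2/3}$, not $n^{1/3}$: with $M\asymp n^{1/3}$ the term $\frac{N\log^2 N}{M^2}$ coming from Proposition \ref{prop: cup E} would be of order $n^{1/3}\to\infty$, making the estimate vacuous. Running the effective union over $M\le m\le N$ with $M\asymp n^{2/3}$ and then enlarging the range for free is exactly what makes the bookkeeping go through.
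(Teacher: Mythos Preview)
Your proposal is correct and matches the paper's proof, which simply reads ``Take $M=n^{2/3},\,N=\big\lfloor\frac{n}{3p\log^2n}\big\rfloor$ in Proposition \ref{prop: cup E}.'' You have filled in the routine verifications the paper omits, and your observation that one must take $M\asymp n^{2/3}$ rather than $n^{1/3}$ (to keep the $N\log^2 N/M^2$ term bounded) is exactly the point; the lower limit $n^{1/3}$ in the displayed union appears to be a typo for $n^{2/3}$, as the applications in \S\ref{sec: conclusion} use $n^{2/3}$, but your range-widening step handles the statement as written in any case.
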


\begin{proof} 
Take $M=n^{2/3},\,N=\big\lfloor\frac{n}{3p\log^2n}\big\rfloor$ in Proposition \ref{prop: cup E}.
\end{proof}

\section{Discriminants in $\mathcal F_{f_0}$: bounding $\sum\P(E_m\setminus E_m')$}
\label{sec: conclusion}

In the present section we conclude the proof of Proposition \ref{prop: disc} by showing that the discrepancy between $E_m$  and $E_m'$ is small when $m$ lies in a suitable range. We maintain the setup and notation of \S\ref{sec: disc}-\ref{sec: sieve}.

\begin{prop}\label{prop: Em to Em'} Denote $k=\deg c$. Then $\P(E_m\setminus E_m')=O(nq^{-m})$ whenever $\max(kd,8\log_qn)<m\le\frac n{2p}$.\end{prop}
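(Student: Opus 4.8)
\textbf{Proof plan for Proposition~\ref{prop: Em to Em'}.} The plan is to understand precisely how a prime divisor $\mathfrak p\in\mathcal S_m$ dividing $f_0(t,\alpha)+h(\alpha)^p$ in $L$ relates, via the norm $P=N_{L/F}(\mathfrak p)\in\F_q[t]$, to the factorization of $\disc_x(f)$, where $f=f_0(t,x)+h(x)^p$. Recall from \eqref{eq: disc norm} and the discussion in \S\ref{sec: outline thm2} that $\disc_x(f)=\pm\lc(f_0')^n\prod_{f_0'(\beta)=0}(f_0(\beta)+h(\beta)^p)$, where the product runs over the roots $\beta$ of $f_0'=c(x)g(t,x)$ in $\overline{\F_q(t)}$. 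The roots split into the roots of $c$ (which lie in $\overline{\F_q}$ and contribute a constant-times-power factor that is irrelevant modulo squares up to the prime $P$ provided $\deg P$ is large — this is where the hypothesis $m>kd$ will be used, to guarantee $P$ does not divide the $c$-part) and the roots of $g$, which form a single Galois orbit of size $d$ and contribute $N_{\F_q(t,\alpha)/\F_q(t)}(f_0(t,\alpha)+h(\alpha)^p)=N_{L/F}(\gamma+h(\alpha)^p)$, where $\gamma=f_0(t,\alpha)$. So the factorization of $\disc_x(f)$ near $P$ is governed by the factorization of $\gamma+h(\alpha)^p$ in $L$, pushed down to $F$ via the norm.

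The key step is the following valuation identity: for a prime $P\in\mathcal P_F$ away from the (bounded, $O(n^2)$) set of ``bad'' places and away from the $c$-part,
\[
v_P(\disc_x(f))=\sum_{\mathfrak q\mid P}(\deg_F\mathfrak q)\,v_{\mathfrak q}(\gamma+h(\alpha)^p).
\]
If $\mathfrak p\in\mathcal S_m$ is the \emph{unique} prime of $\mathcal S_m$ dividing $\gamma+h(\alpha)^p$, then $\deg_F\mathfrak p=1$ (condition (a)) and $v_{\mathfrak p}(\gamma+h(\alpha)^p)=1$ (Lemma~\ref{lem: crt}(ii), which rests on Lemma~\ref{lem: gamma}(iv)); thus $\mathfrak p$ contributes exactly $1$ to $v_P(\disc_x(f))$. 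The event $E_m\setminus E_m'$ therefore requires that \emph{some other} prime $\mathfrak q\mid P$ (necessarily $\mathfrak q\notin\mathcal S_m$, since $\mathfrak p$ was unique in $\mathcal S_m$) also divides $\gamma+h(\alpha)^p$ and contributes, making $v_P(\disc_x(f))\ge 2$; or else $P$ divides the $c$-part of the discriminant. The plan is to bound the probability of each of these auxiliary events and sum over all candidate $\mathfrak p\in\mathcal S_m$.

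So I would write $\P(E_m\setminus E_m')\le\sum_{\mathfrak p\in\mathcal S_m}\P(A_{\mathfrak p}\cap B_{\mathfrak p})$, where $A_{\mathfrak p}$ is the event $\mathfrak p\mid\gamma+h(\alpha)^p$ and $B_{\mathfrak p}$ is the event ``there exists $\mathfrak q\mid P=N_{L/F}(\mathfrak p)$, $\mathfrak q\neq\mathfrak p$, with $\mathfrak q\mid\gamma+h(\alpha)^p$, or $P\mid c$-part.'' The constraint $m>kd$ kills the $c$-part possibility, since $P$ has degree $m$ while the $c$-part of $\disc_x(f)$ has degree $O(kd)<m$ (indeed $\deg c=k$, each root of $c$ contributes a factor $f_0(\beta)+h(\beta)^p$ of bounded $t$-degree, and $\lc(f_0')^n$ is a power so contributes nothing new modulo the relevant primes once $m$ exceeds the degree of $\con_t(f_0')$, which is $k$; one should track these degree bounds carefully but they are routine). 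For the remaining ``extra $\mathfrak q$'' part, I would use a union bound over the at most $[L:F]=O(n)$ primes $\mathfrak q$ lying over $P$, and for each such $\mathfrak q$ invoke Lemma~\ref{lem: crt}(iii) to get $\P(\mathfrak q\mid\gamma+h(\alpha)^p)\le q^{-\min(\deg\mathfrak q,(n-1)/p)}$; combined with the independence-type estimate $\P(A_{\mathfrak p}\cap\{\mathfrak q\mid\cdots\})$ one needs a joint bound. Here one must be slightly careful: if $\mathfrak q$ and $\mathfrak p$ both have small degree, Lemma~\ref{lem: crt}(i) (applied to the pair $\mathfrak p,\mathfrak q$, legitimate since $\deg\mathfrak p+\deg\mathfrak q\le m+[L:F]m\le$ something $\le(n-1)/p$ under $m\le n/(2p)$ after accounting for $[L:F]=O(n)$ — wait, this needs $m$ much smaller; more carefully one uses that $\deg\mathfrak q\ge 2$ forces $\deg_F\mathfrak q\ge 2$ hence $\deg\mathfrak q\ge 2m$, or $\deg\mathfrak q$ could equal $m$ with $\deg_F\mathfrak q=1$) gives $\P(A_{\mathfrak p}\cap\{\mathfrak q\mid\cdots\})=q^{-\deg\mathfrak p-\deg\mathfrak q}$ when degrees are small, and the trivial bound $\P(\le)\P(\mathfrak q\mid\cdots)\le q^{-\min(\deg\mathfrak q,(n-1)/p)}$ otherwise. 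Summing: $\sum_{\mathfrak p\in\mathcal S_m}q^{-m}\cdot(\text{number of }\mathfrak q\mid P, \mathfrak q\neq\mathfrak p)\cdot q^{-\min(\cdots)}$; since $|\mathcal S_m|=O(q^m/m)$ by Lemma~\ref{lem: S_m count}, there are $O(n)$ primes $\mathfrak q$ over each $P$, and the smallest possible $\min(\deg\mathfrak q,(n-1)/p)$ for a $\mathfrak q\neq\mathfrak p$ over $P$ is at least... here is the crux: a prime $\mathfrak q\neq\mathfrak p$ over the \emph{same} $P$ of degree $m$ must itself have degree $\ge m$ if $\deg_F\mathfrak q=1$, contributing $q^{-m}$ per $\mathfrak q$, giving total $O(q^m/m)\cdot q^{-m}\cdot O(n)\cdot q^{-m}=O(nq^{-m}/m)=O(nq^{-m})$, which is exactly the claimed bound.

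\textbf{Main obstacle.} The step I expect to be most delicate is establishing the valuation identity $v_P(\disc_x(f))=\sum_{\mathfrak q\mid P}(\deg_F\mathfrak q)v_{\mathfrak q}(\gamma+h(\alpha)^p)$ cleanly for the ``good'' primes $P$, together with cleanly separating out the contribution of the roots of $c$ from the roots of $g$ — i.e.\ making precise that for $m>kd$ the prime $P$ of degree $m$ cannot arise from the $c\cdot\lc(f_0')^n$ part of \eqref{eq: disc prod over roots} but only from the norm factor $N_{L/F}(\gamma+h(\alpha)^p)$. One has to argue that the roots $\beta$ of $c$ contribute factors $f_0(\beta)+h(\beta)^p$ whose $\F_q[t]$-content (i.e.\ their contribution as elements of $\F_q[t]$ after taking the product over the Galois orbit) has degree bounded in terms of $k,d$, hence cannot be divisible by a prime of degree $m>kd$; handling possible cancellations and the precise height bookkeeping (mirroring the height computations in the proof of Lemma~\ref{lem: S_m count}) is the technical heart. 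Once that is in place, the probabilistic part is a routine union bound built from Lemma~\ref{lem: crt}.
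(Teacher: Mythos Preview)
Your overall strategy is correct and matches the paper's: split $\disc_x(f)$ into the leading-coefficient factor, the product over roots of $c$, and $N_{L/F}(\gamma+h(\alpha)^p)$; use $m>kd$ (and $m>d$) to ensure $P=N_{L/F}(\mathfrak p)$ cannot divide the first two pieces; then reduce to showing $v_P(N_{L/F}(\gamma+h(\alpha)^p))=1$ with high probability.

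Where you diverge from the paper is in bounding the ``extra $\mathfrak q$'' contribution, and this is where your write-up has a gap. You end by treating the case $\deg_F\mathfrak q=1$ and multiplying $\P(A_{\mathfrak p})\cdot\P(A_{\mathfrak q})=q^{-m}\cdot q^{-m}$, but Lemma~\ref{lem: crt}(i) only gives this factorization for primes in $\mathcal S$, and you never verify $\mathfrak q\in\mathcal S$. You also do not treat the case $\deg_F\mathfrak q\ge 2$ at all in your final calculation. The paper sidesteps both issues with one clean observation you missed: on $E_m$, any $\mathfrak q\neq\mathfrak p$ over $P$ with $\mathfrak q\mid\gamma+h(\alpha)^p$ automatically has $\deg_K\mathfrak q=1$ (by condition (b) and Lemma~\ref{lem: gamma}(iv)), so if in addition $\deg_F\mathfrak q=1$ then $\mathfrak q\in\mathcal S_m$, contradicting the \emph{uniqueness} in $E_m$. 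Hence the only surviving case is $\deg_F\mathfrak q\ge 2$, giving $\deg\mathfrak q\ge 2m$, and then the \emph{single} bound $\P(\mathfrak q\mid\gamma+h(\alpha)^p)\le q^{-2m}$ from Lemma~\ref{lem: crt}(iii) suffices---no joint estimate is needed. Summing over $|\mathcal S_m|\cdot O(n)$ pairs gives $O(q^m/m)\cdot n\cdot q^{-2m}=O(nq^{-m})$.

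Two smaller points. First, your ``main obstacle'' is misidentified: the valuation identity and the $c$-part separation are entirely routine degree-counting (each $f_0(t,\beta_j)+h(\beta_j)^p$ lies in $\overline{\F_q}[t]_{\le d}$ since $\beta_j\in\overline\F_q$, so the product has $t$-degree $\le kd<m$; similarly $\deg g_s\le d<m$). The actual crux is the probabilistic bound, and specifically the uniqueness argument above. Second, the Galois orbit of roots of $g$ in $x$ has size $\deg_x g=[L:F]$, not $d=\deg_t g$.
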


\begin{proof} Assume that $E_m$ occurs. This means that $f_0(t,\alpha)+h(\alpha)^{p}$ has a unique prime divisor $\mathfrak p\in\mathcal S_m$. Denote $P=N_{L/F}(\mathfrak p)\in\mathcal P_F$. We have $\deg P=m$ (since $\deg_F\mathfrak p=1$). It is enough to show that \begin{equation}\label{eq: Em equiv}\P\left(E_m\mbox{ and not }P\,\Vert\,\disc(f_0(t,x)+h(x)^p)\right)=O(nq^{-m}),\end{equation}
since this event contains $E_m\setminus E_m'$.

Recall $f_0'(t,x)=c(x)g(t,x)$ and write $g(t,x)=\sum_{i=0}^s g_i(t)x^i,\,g_s\neq 0$. Let $\alpha_1,\ldots,\alpha_s\in\overline L$ be the roots of $g$ (counted with multiplicity), $\beta_1,\ldots,\beta_k\in\overline\F_q\subset L,\,k=\deg c$ the roots of $c(x)$. Using (\ref{eq: disc prod over roots}) and the irreducibility of $g$, we have
\begin{equation*}\begin{split}\disc\left(f_0(t,x)+h(x)^p\right)&=\const\cdot
g_s(t)^n\prod_{i=1}^s(f_0(t,\alpha_i)+h(\alpha_i)^p)\prod_{j=1}^k(f_0(t,\beta_j)+h(\beta_j)^p)\\
&=\const\cdot g_s(t)^nN_{L/F}\left(f_0(t,\alpha)+h(\alpha)^p\right)\prod_{j=1}^k\left(f_0(t,\beta_j)+h(\beta_j)^p\right).\end{split}\end{equation*}
Here $\const$ denotes an arbitrary constant in $\F_q^\times$.

First note that $\prod_{j=1}^k(f_0(t,\beta_j)+h(\beta_j)^p)\in\F_q[t]_{\le kd}$ because $\deg_t(f_0)\le d$ and $\beta_j\in\overline \F_q$, so by the assumption $m>kd$ this term is not divisible by $P$. The term $g_s(t)^n$ is also not divisible by $P$ since $\deg g_s\le \deg_tg=d<m$. Consequently we have $$v_P\left(\disc(f_0(t,x)+h(x)^p)\right)=v_P\left(N_{L/F}(f_0(t,\alpha)+h(\alpha)^{p})\right).$$
Therefore conditional on $E_m$, the negation of the event in (\ref{eq: Em equiv}) is equivalent to $v_P\left(N_{L/F}(f_0(t,\alpha)+h(\alpha)^{p})\right)=1$, which in turn (since $\mathfrak p\,|\,f_0(t,\alpha)+h(\alpha)^{p}$, $\deg_F\mathfrak p=1$ and any $\mathfrak q|P$ satisfies $v_{\mathfrak q}(t),v_{\mathfrak q}(\alpha)\ge 0$ by condition (b) in the definition of $\mathcal S$) is equivalent to the conjunction of the following two conditions:
\begin{enumerate}
\item[(i)] $v_{\mathfrak p}(f_0(t,\alpha)+h(\alpha)^{p})=1$,
\item[(ii)] $v_{\mathfrak q}(f_0(t,\alpha)+h(\alpha)^{p})=0$ for any $\mathfrak q\neq\mathfrak p$ lying over $P$.
\end{enumerate}

Since $\mathfrak p\,|\,f_0(t,\alpha)+h(\alpha)^{p}$, (i) holds by Lemma \ref{lem: crt}(ii).
If condition (ii) fails, there exists $\mathfrak q|P,\,\mathfrak q\neq\mathfrak p$ such that $\mathfrak q\,|\,f_0(t,\alpha)+h(\alpha)^{p}$. By condition (b) in the definition of $\mathcal S$ and Lemma \ref{lem: gamma}(iv) we have $\deg_K\mathfrak q=1$. If in addition $\deg_F\mathfrak q=1$, then $\mathfrak q\in\mathcal S$ and $\deg\mathfrak q=\deg P=m$, so $\mathfrak q\in\mathcal S_m$. This contradicts the uniqueness assumption in the definition of $E_m$. Therefore we must have $\deg_F\mathfrak q>1$. Hence using Lemma \ref{lem: crt}(iii), Lemma \ref{lem: S_m count}, $[L:F]=O(n)$ and the assumption $8\log_qn< m\le\frac n{2p}$,
\begin{multline*}\label{eq: cond ii fails}\P(E_m\mbox{ and (ii) fails})\le\sum_{\mathfrak p\in\mathcal S_m}\sum_{\mathfrak q|N_{L/F}(\mathfrak p)\atop{\deg_F\mathfrak q>1}}\P\left(\mathfrak q\,|\,f_0(t,\alpha)+h(\alpha)^{p}\right)\ll \frac {q^m}m\cdot n\cdot\max_{\mathfrak p\in\mathcal S_m\atop{\mathfrak q|N_{L/F}(\mathfrak p)\atop{\deg_F\mathfrak q>1}}}\left(q^{-\min(\deg\mathfrak q,n/p)}\right)\\
=\frac {nq^m}m\max_{\mathfrak p\in\mathcal S_m\atop{\mathfrak q|N_{L/F}(\mathfrak p)\atop{\deg_F\mathfrak q>1}}}\left(q^{-\min((\deg_F\mathfrak q) m,n/p)}\right) \le \frac {nq^m}mq^{-2m}\ll nq^{-m}.
\end{multline*}
Thus $$\P\left(E_m\mbox{ and not }P\,\Vert\,\disc(f_0(t,\alpha)+h(x)^p))\right)=\P(E_m\mbox{ and (ii) fails})=O(nq^{-m}),$$
establishing (\ref{eq: Em equiv}) and completing the proof.
\end{proof}

\begin{cor}\label{cor: Em to Em'} Assume $k=\deg c< n^{1/2}$. Then $\sum_{n^{2/3}\le m\le\frac{n}{3p\log^2n}}\P(E_m\setminus E_m')=O\left(nq^{-n^{2/3}}\right)$.\end{cor}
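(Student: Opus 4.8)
The plan is to deduce Corollary \ref{cor: Em to Em'} directly from Proposition \ref{prop: Em to Em'} by checking that the hypotheses of that proposition hold uniformly throughout the summation range, and then summing a geometric series. There is essentially no new difficulty here; the only points requiring care are that the implied constant in Proposition \ref{prop: Em to Em'} is uniform in $m$ (it depends only on $q,d$), and that every $m$ in the range $n^{2/3}\le m\le \frac n{3p\log^2 n}$ really does satisfy $\max(kd,8\log_q n)<m\le\frac n{2p}$ once $n$ is large.

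First I would verify the hypotheses. By assumption $k=\deg c<n^{1/2}$ and $d$ is a fixed constant, so $kd<dn^{1/2}=o(n^{2/3})$; hence $kd<n^{2/3}\le m$ for all sufficiently large $n$. Likewise $8\log_q n=o(n^{2/3})<m$ for large $n$. Finally, $\frac n{3p\log^2 n}\le\frac n{2p}$ holds as soon as $\log^2 n\ge 2/3$, i.e. for all $n\ge 3$. Thus for $n$ large and every integer $m$ with $n^{2/3}\le m\le\frac n{3p\log^2 n}$, Proposition \ref{prop: Em to Em'} applies and yields $\P(E_m\setminus E_m')=O(nq^{-m})$, with the implied constant depending only on $q,d$ and in particular not on $m$.

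Summing over the range, and using that $q\ge 3$ (an odd prime power) so that $\sum_{j\ge 0}q^{-j}=\frac{1}{1-q^{-1}}\le\frac 32$ is an absolute constant, I obtain
\[
\sum_{n^{2/3}\le m\le\frac n{3p\log^2 n}}\P(E_m\setminus E_m')
=O\!\left(n\sum_{m\ge \lceil n^{2/3}\rceil}q^{-m}\right)
=O\!\left(n\cdot\frac{q^{-\lceil n^{2/3}\rceil}}{1-q^{-1}}\right)
=O\!\left(nq^{-n^{2/3}}\right),
\]
which is the assertion of the corollary.

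\textbf{Main obstacle.} There is no real obstacle; the statement is a routine consequence of Proposition \ref{prop: Em to Em'}. The only thing I would be mildly careful about is confirming that the hypothesis $k<n^{1/2}$ (which is exactly the extra assumption imposed in Proposition \ref{prop: disc} and recorded in \S\ref{sec: conclusion}) is what forces $kd<m$ throughout the range, so that Proposition \ref{prop: Em to Em'} is genuinely available for every $m$ being summed, and that the geometric tail is dominated by its first term.
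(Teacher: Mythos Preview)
Your proposal is correct and follows exactly the same approach as the paper: the paper's proof is the single sentence ``Sum the assertion of Proposition \ref{prop: Em to Em'} over $n^{2/3}\le m\le\frac{n}{3p\log^2n}$,'' and you have simply spelled out the routine verification of the hypotheses and the geometric tail.
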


\begin{proof} Sum the assertion of Proposition \ref{prop: Em to Em'} over $n^{2/3}\le m\le\frac{n}{3p\log^2n}$.\end{proof}

\begin{proof}[Proof of Proposition \ref{prop: disc}]
By Corollary \ref{cor: cup E} combined with Corollary \ref{cor: Em to Em'},
\begin{multline*}\P\left(\Cup_{n^{2/3}\le m\le\frac n{3p\log^2n}}E_m'\right)\ge\P\left(\Cup_{n^{2/3}\le m\le\frac n{3p\log^2n}}E_m\right)-\sum_{n^{2/3}\le m\le\frac n{3p\log^2n}}\P(E_m\setminus E_m')\\=1+O\left(\frac{\log^2n}{n^{1/3}}\right)+O\left(nq^{-n^{2/3}}\right)=1+O\left(\frac{\log^2n}{n^{1/3}}\right).\end{multline*} This gives (\ref{eq: key estimate}), establishing Proposition \ref{prop: disc}.
\end{proof}

\bibliography{mybib}
\bibliographystyle{alpha}

\end{document}